\def\bfm#1{\protect{\makebox{\boldmath $#1$}}}
\def\eep {\bfm{\varepsilon}}
\def\b {\bfm{b}}
\def\u {\bfm{u}}
\def\f {\bfm{f}}
\def\g {\bfm{g}}
\newcommand{\R}{\mathbb R}
\newcommand{\N}{\mathbb N}
\newcommand{\D}{\mathcal D}
\newcommand{\wt}{\widetilde}
\newcommand{\wh}{\widehat}
\newcommand{\Gj}{\Gamma_{\hspace{-.07cm}j}}
\newcommand{\argmin}{\operatornamewithlimits{argmin}}
\title{The Direct Radial Basis Function Partition of Unity (D-RBF-PU) Method for Solving PDEs}
\author{Davoud Mirzaei\thanks{Department of Applied Mathematics and Computer Science, Faculty of Mathematics and Statistics, University of Isfahan, 81746-73441 Isfahan, Iran. (\texttt{d.mirzaei@sci.ui.ac.ir})
\newline
\textbf{Funding:} The work of the author was supported by the Iranian National Science Foundation (INSF) under grant No. 98012657.
\newline
{Date: September 23, 2019 (submitted), October 26, 2020 (accepted)}
}
}
\begin{document}
\maketitle

\begin{abstract}
In this paper, a new localized radial basis function (RBF) method based on partition of unity (PU) is proposed for solving boundary and initial-boundary value problems.
The new method is benefited from a direct discretization approach and is called the `direct RBF partition of unity (D-RBF-PU)' method.
Thanks to avoiding all derivatives of PU weight functions as well as all lower derivatives of local approximants, the new method is faster and simpler than the standard RBF-PU method. Besides, the discontinuous PU weight functions can now be utilized to develop the method in a more efficient and less expensive way. 
Alternatively, the new method is an RBF-generated finite difference (RBF-FD) method in a PU setting which is much faster and in some situations more accurate than the original RBF-FD. The polyharmonic splines are used for local approximations, and the error and stability issues are considered. Some numerical experiments
on irregular 2D and 3D domains, as well as cost comparison tests, are performed
to support the theoretical analysis and to show the efficiency of the new method.
\end{abstract}

\begin{keywords}
Radial Basis Function (RBF), Partition of Unity (PU), RBF-FD, Partial Differential Equations (PDEs).
\end{keywords}

\begin{AMS}
65N06, 65N30, 65Dxx, 41Axx.
\end{AMS}

\pagestyle{myheadings}
\thispagestyle{plain}
\markboth{D-RBF-PU Method for Solving PDEs}{D. Mirzaei}

\section{Introduction}\label{sect_introduction}
Approximation by {\em kernels} and in particular by {\em radial basis functions (RBFs)} has received a lot of attention due to
many attractive advantages such as ease of implementation, flexibility with respect to geometry and dimension and giving spectral accuracy in some situations.
However,
the global RBF approximations produce full and ill-conditioned matrices which make them restricted for large scale problems.
So, localized approaches, such as {\em RBF-generated finite difference (RBF-FD)} and {\em RBF partition of unity (RBF-PU)}  methods, are currently being developed.

The earliest reference to RBF-FD seems to be a conference presentation
in 2000 \cite{tolstykh:2000-1}. Then, this method was developed in three
simultaneous studies \cite{shu-et-al:2003-1,tolstykh:2003-1,wright:2003-thesis} in 2003.
As in the classical FD methods, RBF-FD results in sparse matrices with an additional advantage that has all the flexibility of global RBFs in terms of handling irregular geometries and scattered node layouts.
To avoid the ill-conditioning at the {\em near flat} cases, i.e. for very small values of RBF shape parameters, some technical algorithms have been
introduced in \cite{fornberg-et-al:2011-1,fornberg-et-al:2013-1,larsson-et-al:2013-1} for Gaussian RBF and in \cite{bozzini-et-al:2015-1,fornberg-piret:2007-1,fornberg-wright:2004-1,wright-fornberg:2017-1} for all types of RBFs. Equipping with such algorithms, the RBF-FD method has been successfully applied on a large class of PDEs in Euclidian spaces and on smooth sub-manifolds \cite{flyer-et-al:2012-1, fornberg-lehto:2011-1,fornberg-flyer:2015-1}.

Although a PU method has been introduced by Shepard in 1968 \cite{shepard:1968-1}, the first combination with RBF interpolation goes back
to \cite{wendland:2002-3} in 2002. However, a PU finite element method for solving PDEs has been proposed in
\cite{babuska-melenk:1997-1, melenk-babuska:1996-1} few years earlier.
The RBF-PU collocation method for solving transport equations on the unit sphere has been
developed in \cite{aiton:2014-thesis}.
The capability of the RBF-PU method for numerical solution of
parabolic PDEs in financial mathematics has been investigated in \cite{Mollapourasl-et-al:2019-1,safdari-et-al:2015-1,shcherbakov:2016-1,shcherbakov-larsson:2016}.
Preconditioning schemes are studied in \cite{heryudono-et-al:2016-1} and a least square RBF-PU method is proposed in \cite{larsson-et-al:2017-1}. Adaptivity and stability issues via variably scaled kernels
are recently given in \cite{DeMarchi-et-al:2019-1}. Other applications can be found in \cite{cavoretto:2015-1,DeMarchi-et-al:2019-2,DeRossi-et-al:2016-1}.

In this paper we introduce a new RBF-PU method for solving boundary value problems.\ We use
the idea of {\em direct discretization} and link the RBF-PU to the RBF-FD and construct a
{\em direct RBF-PU} method which is more efficient than both RBF-FD and RBF-PU methods. {Indeed, the classical FD method as well as the RBF-FD method use the direct approach
for discretizing a PDE operator to a finite dimensional differentiation matrix. The direct approach has been also used in \cite{mirzaei-et-al:2012-1} to speed up the computations of moving least squares derivatives and then
in \cite{mirzaei-schaback:2013-1} to accelerate the meshless local Petrov-Galerkin method.
We refer the reader to \cite{schaback:2013-1} for more details
about the direct discretization methods.
}

The rest of this paper is organized as follows.
In section \ref{sect-PUmethods}, the idea of PU in approximation theory is reviewed. In section \ref{sect-RBF-PU}, the combination of RBF approximation with PU weights and the classical RBF-PU method for solving PDEs are presented. In section \ref{sect-RBF-FD}, the well-known RBF-FD method which is in connection with the new method is briefly reviewed. In section \ref{sect-RBF-FD-PU}, the new direct RBF-PU method is introduced and its connections to some variations of the RBF-FD method are derived. Also, the scaling property of polyharmonic kernels and a stabilization technique based on scaling are recalled.  In section \ref{sect-error-stability}, the theoretical foundation of the method is provided and the consistency and stability issues are considered. Finally, in section \ref{sect-numerical-results}, some
numerical experiments and comparisons with other RBF-based methods are given.

As a remark on notation, we will use bold math symbols for vectors as far as we are writing in numerical linear algebra. Matrices are also denoted by
capital non-bold symbols.

\section{Partition of unity}\label{sect-PUmethods}
Let $\{\Omega_\ell\}_{\ell=1}^{N_c}$
be an open and bounded
covering of $\Omega$ that means all $\Omega_\ell$ are open and bounded and $\Omega\subset \bigcup_{\ell=1}^{N_c}\Omega_\ell$. A family of nonnegative
functions $\{w_\ell\}_{\ell=1}^{N_c}$ is called a partition of unity (PU) with respect to the covering
$\{\Omega_\ell\}$ if
\begin{itemize}
\item[(1)] $\mathrm{supp}(w_\ell) \subseteq \Omega_\ell$,
\item[(2)]$\displaystyle\sum_{\ell=1}^{N_c}w_\ell(x)=1,\; \forall x\in\Omega$.
\end{itemize}
{Sometimes, for a differentiation purpose, one needs to impose more regularity on PU weight functions $w_\ell$,
and may assume $w_\ell\in C^k(\R^d)$ and for every $\alpha\in \N_0^d$ with $|\alpha|\leqslant k$ there exists a constant
$C_\alpha$ such that
\begin{equation}\label{Dw_ell-bound}
\|D^\alpha w_\ell \|_{L_\infty(\Omega_\ell)} \leqslant C_\alpha \rho_\ell^{-|\alpha|},
\end{equation}
where $\rho_\ell = \frac{1}{2}\sup_{x,y\in\Omega_\ell}\|x-y\|_2$. In this case, $\{w_\ell\}$ is called a $k$-stable PU with respect to $\{\Omega_\ell\}$ \cite{wendland:2005-1}.
}

We start with an overlapping covering $\{\Omega_\ell\}_{\ell=1}^{N_c}$ of $\Omega$. If we assume
$V_\ell$ is an approximation space on $\Omega_\ell$ and $s_\ell\in V_\ell$ is a local approximant of a function $u$ on $\Omega_\ell$, then
\begin{equation}\label{pu-approx}
s = \sum_{\ell=1}^{N_c}w_\ell s_\ell
\end{equation}
is a global approximation of $u$ on $\Omega$ which is formed by joining the local approximants $s_\ell$ via PU weights $w_\ell$.
For example, if $X=\{x_1,\ldots,x_N\}\subset \Omega$, $X_\ell= X\cap \Omega_\ell$ and $s_\ell$ are $u$ interpolants on $X_\ell$ then
we can simply show that $s$ is a $u$ interpolant on $X$.

A possible choice for $w_\ell$ is the Shepard's weights
\begin{equation}\label{w-shepardform}
w_\ell(x)= \frac{\psi_\ell(x)}{\sum_{j=1}^{N_c}\psi_j(x)},\quad 1\leqslant \ell\leqslant N_c,
\end{equation}
where $\psi_\ell$ are nonnegative, nonvanishing and compactly supported functions on $\Omega_\ell$.

If $w_\ell$ and $s_\ell$ are smooth enough then the PU approximation \eqref{pu-approx} can be used for solving differential equations.
To see some applications see \cite{babuska-melenk:1997-1,melenk-babuska:1996-1} that apply the PU method on finite element spaces,
and \cite{aiton:2014-thesis,cavoretto:2015-1,DeMarchi-et-al:2019-2,DeMarchi-et-al:2019-1,DeRossi-et-al:2016-1,Garmanjani-et-al:2018-1,larsson-et-al:2017-1,
Mollapourasl-et-al:2019-1,safdari-et-al:2015-1,shcherbakov:2016-1,shcherbakov-larsson:2016} which combine the PU method with RBF approximations.

To describe the overall approach, assume that we are looking for the approximate solution of a PDE problem of the form
\begin{align}
L u &= f,\;\;  \mbox{in }\Omega, \label{Lu=f}\\
B u &= g,\;\;  \mbox{on }\Gamma \label{Bu=g}
\end{align}
where $\Omega$ is a domain in $\R^d$, $\Gamma=\partial \Omega$ denotes its boundary and $L$ and $B$ are linear differential operators
defined and continuous on some normed linear space $U$ in which the true solution of \eqref{Lu=f}-\eqref{Bu=g} should lie.
Here $B$ is the boundary operator describing the Drichlet and/or Neumann boundary conditions.
To obtain a numerical solution,
the PDE operators $L$ and $B$ should operate on $s$ (and hence on products $w_\ell s_\ell$) in \eqref{pu-approx} to get
\begin{equation*}
Lu \approx Ls = \sum_{\ell=1}^{N_c} L(w_\ell s_\ell),\quad Bu \approx Bs = \sum_{\ell=1}^{N_c} B(w_\ell s_\ell),
\end{equation*}
where $s_\ell$ is local approximation of $u$ in patch $\Omega_\ell$.
Differential operators $L$ and $B$ should contain certain partial derivatives $D^\alpha$ for multi-indices $\alpha\in \N_0^d$.
Using the Leibniz's rule we have
$$
D^\alpha s = \sum_{\ell=1}^{N_c} \sum_{|\beta|\leqslant |\alpha|}{\beta \choose \alpha}D^\beta w_\ell D^{\alpha-\beta}s_\ell,
$$
provided that both $w_\ell$ and $s_\ell$ are smooth enough.
For example if $L=\Delta=D^{(2,0,\ldots,0)}+D^{(0,2,\ldots,0)}+\cdots+D^{(0,0,\ldots,2)}$, the well-known Laplacian operator, then
$$
\Delta s = \sum_{\ell=1}^{N_c} \left(s_\ell \Delta w_\ell + 2\nabla w_\ell \cdot\nabla s_\ell + w_\ell \Delta s_\ell\right),
$$
where derivatives of $w_\ell$ are even more complicated if $w_\ell$ is defined as \eqref{w-shepardform}.
This will also increase the computational costs of the method.
This paper proposes an alternative approach that avoids the above computations and reduces both computational cost and algorithmic complexity.

\section{RBF-PU method}\label{sect-RBF-PU}
Let $\phi:\R^d\to\R$ be a radial and conditionally positive definite function of order $n$ and $X=\{x_1,x_2,\ldots,x_N\}$ be a set of trial points distributed
in $\Omega\subset\R^d$, the domain in which the PDE is posed. Let $\{\Omega_\ell\}_{\ell=1}^{N_c}$ be an open and bounded covering of $\Omega$ and
$X_\ell=X\cap \Omega_\ell$, $1\leqslant \ell\leqslant N_c$, be sets of trial points in patches $\Omega_\ell$.
Assume further that
$$J_\ell:=\{j\in\{1,\ldots,N\}:x_j\in X_\ell\}.$$
Local RBF approximation spaces in $\Omega_\ell$ are defined as
\begin{equation*}
V_\ell =V_{\phi,X_\ell}:=\mathrm{span}\{\phi(\cdot-x_j): j\in J_\ell\} \oplus \mathbb P_{n-1}(\R^d), \quad 1\leqslant \ell\leqslant N_c,
\end{equation*}
and local approximants $s_\ell\in V_\ell$ of function $u$ are expressed as
\begin{equation*}
s_\ell(x) = \sum_{j\in J_\ell}c_j\phi(x-x_j) + \sum_{k=1}^Q b_k p_k(x),\quad x\in\Omega_\ell\cap\Omega,
\end{equation*}
where $\{p_1,\ldots,p_Q\}$ is a basis for polynomial space $\mathbb P_{n-1}(\R^d)$ and $Q=\frac{(n+d-1)!}{d!(n-1)!}$ is its dimension.
For an interpolation problem,
coefficient vectors $c$ and $b$ are obtained by enforcing interpolation conditions $s_\ell(x_j) = u(x_j)$ for $j\in J_\ell$ together with
side conditions
$$
\sum_{j\in J_\ell}c_j p_k(x_j) = 0,\quad j\in J_\ell,\; 1\leqslant k\leqslant Q ,
$$
leading to linear system of equations
\begin{equation*}
\begin{bmatrix}
\Phi & P\\ P^T & 0
\end{bmatrix}
\begin{bmatrix}
c \\ b
\end{bmatrix}=
\begin{bmatrix}
u|_{X_\ell} \\ 0
\end{bmatrix},
\end{equation*}
where $\Phi(i,j)=\phi(x_j-x_i)$, $i,j\in J_\ell$, $P(j,k)=p_k(x_j)$, $j\in J_\ell$, $1\leqslant k\leqslant Q$, and
$u|_{X_\ell}= (u(x_j))$, $j\in J_\ell$.
This system is uniquely solvable if and only if $X_\ell$ is a $\mathbb P_{n-1}(\R^d)$-unisolvent set, meaning that the only polynomial from $\mathbb P_{n-1}(\R^d)$ which is zero on $X_\ell$ is the zero function.

The interpolant $s_\ell$ can also be written in the Lagrange form as
\begin{equation}\label{sl-lagrangeform}
s_\ell(x) = \sum_{j\in J_\ell} u_j^*(\ell;x) u(x_j),
\end{equation}
where Lagrange functions $u_j^*(\ell;\cdot)$ are solution of
\begin{equation*}
\begin{bmatrix}
\Phi & P\\ P^T & 0
\end{bmatrix}
\begin{bmatrix}
u^*(\ell;x) \\ v^*(\ell;x)
\end{bmatrix}=
\begin{bmatrix}
\phi(\cdot-x)|_{X_\ell} \\ p(x)
\end{bmatrix},
\end{equation*}
for $p(x)=[p_1(x),\ldots,p_Q(x)]^T$, and satisfy $u_j^*(\ell;x_i)=\delta_{ij}$.
Substituting \eqref{sl-lagrangeform} into \eqref{pu-approx} yields
\begin{equation}\label{s-lagrangeform}
s(x) = \sum_{\ell=1}^{N_c}\sum_{j\in J_\ell}\Big(w_\ell(x) u_j^*(\ell;x) \Big)u(x_j), \quad x\in\Omega.
\end{equation}
For a fixed $x\in\Omega$ since $w_\ell(x)=0$ if $x\notin \Omega_\ell$, we may introduce the index family
$$
I(x):= \big\{\ell\in\{1,2,\ldots,N_c\}: x\in \Omega_\ell\big\}
$$
to replace the summation script on $\ell$ in \eqref{s-lagrangeform} by $\ell\in I(x)$.

Now, we review the known {\em collocation} RBF-PU method for solving the PDE problem \eqref{Lu=f}-\eqref{Bu=g}.
Assume
$$Y=\{y_1,\ldots,y_M\}$$
is a set of test points in $\Omega$ and on its boundary $\Gamma$.
This set may be different from the trial set of points $X$ but for some theoretical reasons
{(to guarantee the solvability of the final unsymmetric system)}
we may assume $M\geqslant N$.
Assume $Y=Y_\Omega \cup Y_\Gamma$ where $Y_\Omega$ and $Y_\Gamma$ contain interior and boundary test points, respectively, and $Y_\Omega \cap Y_\Gamma=\emptyset$.
In a collocation method the PDE and its boundary conditions are sampled at sets of points $Y_\Omega$ and $Y_\Gamma$, respectively, to get
\begin{align}
(Lu)(y_k) =& f(y_k), \quad y_k\in Y_\Omega, \label{Luk=fk}\\
(Bu)(y_k) =&\, g(y_k), \quad y_k\in Y_\Gamma \label{Buk=gk}
\end{align}
which is a semi-discrete form of \eqref{Lu=f}-\eqref{Bu=g}.

The standard RBF-PU method,
uses the approach given at the end of section \ref{sect-PUmethods} where
$Lu$ and $Bu$ are approximated by $Ls$ and $Bs$, respectively, i.e.,
\begin{equation*}
Lu\approx Ls = \sum_{\ell=1}^{N_c}\sum_{j\in J_\ell}L\Big(w_\ell\, u_j^*(\ell;\cdot) \Big)u(x_j), \quad
Bu\approx Bs = \sum_{\ell=1}^{N_c}\sum_{j\in J_\ell}B\Big(w_\ell\, u_j^*(\ell;\cdot) \Big)u(x_j).
\end{equation*}
Inserting in \eqref{Luk=fk}-\eqref{Buk=gk} and replacing `$\approx$' by `$=$' lead to
 $M\times N$ and unsymmetric linear system of equations
\begin{equation}\label{final-system}
\begin{bmatrix}
A_{L} \\ A_B
\end{bmatrix}
\u
=
\begin{bmatrix}
f|_{Y_\Omega}\\g|_{Y_\Gamma}
\end{bmatrix},
\end{equation}
where $\u=[u_1,\ldots,u_N]^T$ is the approximate solution for exact nodal vector $\u_{ex}=u|_X=[u(x_1),\ldots,u(x_N)]^T$.
Components of $A_L$ and $A_B$ are determined by
\begin{align*}
A_L(k,j) &= \sum_{\ell\in I(y_k)}\Big(L(w_\ell\, u_j^*(\ell;\cdot))\Big)(y_k) ,\quad y_k\in Y_\Omega,\\
A_B(k,j) &= \sum_{\ell\in I(y_k)}\Big(B(w_\ell\, u_j^*(\ell;\cdot))\Big)(y_k) ,\quad y_k\in Y_\Gamma.
\end{align*}
Differential operators $L$ and $B$ should act on product $w_\ell u_j^*(\ell;\cdot)$ leading  to some complicated calculations and algorithmic complexity, as pointed out in section \ref{sect-PUmethods}.

\section{RBF-FD method}\label{sect-RBF-FD}
{In this section the RBF-FD is briefly reviewed as it is connected to the new method of section \ref{sect-RBF-FD-PU}.}
The RBF-FD arises naturally as a generalization of standard FD formulas.
For a linear differential operator $L$, the value of $(Lu)(y_k)$ can be {\em locally} approximated by values of $u$ at a
stencil $\wt X_k\subset X$ of points nearing $y_k$ by obtaining the weights
$\xi_j$ such that
\begin{equation*}
Lu(y_k) \approx \xi^T u|_{\wt X_{k}}
\end{equation*}
where the test point $y_k$ is usually assumed to be located approximately at the center of stencil $\wt X_k$.
To obtain the weight vector $\xi$, we require the stencil to reproduce all
functions spanned by RBFs $\{\phi(\cdot-x_j): x_j\in \wt X_k\}$. This happens if $\xi$ satisfies
\begin{equation*}
\sum_{x_j\in \wt X_{k}} \xi_j \phi(x_i-x_j) = L\phi(x_i-y_k), \quad x_i \in \wt X_{k},
\end{equation*}
or in matrix form
\begin{equation*}
\Phi\xi = L\phi(\cdot-y_k)|_{\wt X_k}.
\end{equation*}
It is beneficial to also augment the stencil
with polynomial terms and add matching
constraints to the associated RBF expansion coefficients.
This corresponds to requiring the
weights to further reproduce the polynomial space $\mathbb P_{n-1}(\R^d)$. The augmented linear system then becomes
\begin{equation}\label{augmented-system}
\begin{bmatrix}
\Phi & P \\
P^T & 0
\end{bmatrix}
\begin{bmatrix}
\xi\\ \nu
\end{bmatrix}
=
\begin{bmatrix}
L\phi(\cdot-y_k)|_{\wt X_k}\\
Lp(y_k)
\end{bmatrix}.
\end{equation}
{This system is uniquely solvable if and only if $\wt X_k$ is a $\mathbb P_{n-1}(\R^d)$-unisolvent set.
The $k$-th row of the differentiation matrix $A_L$ contains the RBF-FD weight vector $\xi$ of test point $y_k$ interspersed with zeros.
Zeros are corresponded to trial points outside the stencil $\wt X_k$. RBF-FD is extensively used for solving various PDE problems in engineering
and science. For more details see \cite{fornberg-flyer:2015-1} and references therein.}

\section{The new method}\label{sect-RBF-FD-PU}
{Again consider the PDE problem \eqref{Lu=f}-\eqref{Bu=g}.
In section \ref{sect-PUmethods}, the standard PU approach for solving this problem was reviewed. In this section we present an alternative
approach in which $Lu$ and $Bu$ are {\em directly} approximated by the PU approximation as}
\begin{equation}\label{direct-approx-sLsB}
Lu \approx  \sum_{\ell=1}^{N_c} w_\ell s^L_\ell=:s^L,\quad Bu \approx  \sum_{\ell=1}^{N_c} w_\ell s^B_\ell=:s^B,
\end{equation}
where $s^L_\ell$ and $s^B_\ell$ are local approximations of $Lu$ and $Bu$ in patch $\Omega_\ell$. This will be, of course, a different approach because (at least) derivatives of $w_\ell$ are not required.
Since we have a direct approximation for $Lu$ and $Bu$
{without any detour via local functions $s_\ell$ and global approximation \eqref{pu-approx},}
this approach is called the {\em direct approach}.
{Of course, if $L$ (or $B$) is the identity operator then the pure PU approximation \eqref{pu-approx} is resulted.}

Here we combine the new approach with the RBF approximation, although the above discussion is not limited to this specific approximation technique.
To this aim, we assume $s_\ell^L$ and $s_\ell^B$ in \eqref{direct-approx-sLsB} are local RBF approximations of $Lu$ and $Bu$, respectively, having representations
\begin{equation*}
s^L_\ell(x) = \sum_{j\in J_\ell} Lu_j^*(\ell;x) u(x_j), \quad s^B_\ell(x) = \sum_{j\in J_\ell} Bu_j^*(\ell;x) u(x_j), \quad x\in \Omega_\ell\cap\Omega,
\end{equation*}
where the (generalized) Lagrange function vector $Lu^*(\ell;x)$ is the solution of linear system
\begin{equation}\label{gen-lag-systm}
\begin{bmatrix}
\Phi & P\\ P^T & 0
\end{bmatrix}
\begin{bmatrix}
Lu^*(\ell;x) \\ Lv^*(\ell;x)
\end{bmatrix}=
\begin{bmatrix}
L\phi(\cdot-x))|_{X_\ell} \\ Lp(x)
\end{bmatrix}.
\end{equation}
Similarly, $Bu^*(\ell;x)$ is the solution of the same system where the operator $L$ is replaced by $B$.
Then, global approximations are written as
\begin{equation*}
s^L(x) = \sum_{\ell=1}^{N_c}\sum_{j\in J_\ell}\Big(w_\ell(x) Lu_j^*(\ell;x) \Big)u(x_j), \quad
s^B(x) = \sum_{\ell=1}^{N_c}\sum_{j\in J_\ell}\Big(w_\ell(x) Bu_j^*(\ell;x) \Big)u(x_j),
\end{equation*}
for $x\in\Omega$.
It is clear that $s^L_\ell$ and $s^B_\ell$ are identical with $Ls_\ell$ and $Bs_\ell$ on patch $\Omega_\ell$, while global approximants $s^L$ and $s^B$ are different from their counterparts $Ls$ and $Bs$.

Collocating at test points $Y_\Omega$ and $Y_\Gamma$ will yield the same system as \eqref{final-system} but with different matrix entries
\begin{align*}
A_L(k,j) &= \sum_{\ell\in I(y_k)}w_\ell(y_k) Lu_j^*(\ell;y_k) ,\quad y_k\in Y_\Omega,\\
A_B(k,j) &= \sum_{\ell\in I(y_k)}w_\ell(y_k)Bu_j^*(\ell;y_k) ,\quad y_k\in Y_\Gamma.
\end{align*}
If we set $A=\left[\begin{array}{c}A_L\\A_B\end{array}\right]$ and $\b=\left[\begin{array}{c}f|_{Y_\Omega}\\g|_{Y_\Gamma}\end{array}\right]$
then the final linear system is shortened to
\begin{equation}\label{final-system-short}
A\u=\b.
\end{equation}
Again, we note that $\u=[u_1,\ldots,u_N]^T$ is the approximate solution for the exact nodal vector $\u_{ex}=[u(x_1),\ldots,u(x_N)]$.

Comparing with the first approach, $L$ and $B$ are not needed to be operated on PU weights $w_\ell$. For example if
$L=\Delta$ then in the first method $\Delta (w_\ell u_j^*)=w_\ell\Delta u_j^* +2\nabla w_\ell\cdot\nabla u_j^* + u_j^*\Delta w_\ell$ is required
for computing the components of $A_L$, while in the second method the single term $w_\ell\Delta u_j^*$ does the whole job. So, not only all derivatives of $w_\ell$  but also many lower derivatives of Lagrange functions $u^*_j(\ell;\cdot)$ are not actually required.
At the first glance, one may expect a lost in accuracy since some terms are ignored in the new approximation.
But as we will see in the following sections, the rates of convergence
for both methods are
the same.

Consequently, the second approach suggests:
avoid approximating $u$ globally by joining local approximants $s_\ell$ and then approximating $Lu$, in favor of {\em directly} approximating $Lu$ by
joining local approximants $s^L_\ell$.
{Thus, the new method is called the {\em Direct RBF-PU} ({\em D-RBF-PU}) method.
It also has some connections to RBF-FD methods that will be explored in the following subsections.}

\subsection{Connection to RBF-FD}
There exists a tight connection to the RBF-FD method. In another point of view, the direct method of this section sets up the RBF-FD method
in a partition of unity environment.
Comparing \eqref{augmented-system} with \eqref{gen-lag-systm},
the RBF-FD weights $\xi_j$ for approximating $(Lu)(y_k)$ are (generalized) Lagrange function values $Lu_j^*(y_k)$ on set of points (stencil) $\wt X_k$. Thus, $s^L_\ell(y_k)$ is an RBF-FD approximation of $Lu(y_k)$ on stencil $\wt X_k = X_\ell$.
Since $y_k$ may belong to more than one patch $\Omega_\ell$ (precisely, $y_k\in \Omega_\ell$ for all $\ell\in I(y_k)$), all RBF-FD approximants
$s^L_\ell(y_k)$, $\ell\in I(y_k)$, are computed and then joined by
$$
\sum_{\ell\in I(y_k)}w_\ell(y_k) s^L_\ell(y_k)
$$
to form the RBF-FD partition of unity approximant $s^L(y_k)$.
Note here that $y_k$ is not approximately located at the center of patch $\Omega_\ell$ for all $\ell\in I(y_k)$.
The same argument is obviously true for the boundary operator $B$.

In RBF-FD, $M$ stencils are formed and thus $M$ local linear systems should be solved where $M$ is the number of test or evaluation points. This number is reduced to $N_c$, $N_c\ll M$, in D-RBF-PU that makes it {much} faster than the classical RBF-FD
for setting up the final linear system.
{Since RBF-FD uses a single stencil per test point, its resulting differentiation matrix is sparser.
However, as we will see, a variation of D-RBF-PU leads to a differentiation matrix that is as sparse as the one of RBF-FD.}
In theory both methods have the same order of convergence, but numerical results of section \ref{sect-numerical-results} show that the new method is more accurate in some situations
{such as for PDEs with Neumann boundary conditions.}

{At the end of the following subsection, we will show that the standard RBF-FD can be viewed as a special case of the
D-RBF-PU method.
}

\subsection{Constant{-generated} PU weight functions}\label{subsect-constantPU}
Here, we discuss two simple D-RBF-PU methods that use {piecewise constant functions $\psi_\ell$ to generate} the PU weight functions $w_\ell$ on the covering $\{\Omega_\ell\}$.
Usually, compactly supported and smooth functions (on the whole $\Omega$),
such as Wendland's functions \cite{wendland:1995-1},
are used {to generate a smooth} PU weight when derivatives are required.
As mentioned before, {no smoothness assumption on the PU weights}
is required in the new D-RBF-PU method.
A simple case is to assume
$$
\psi_\ell(x)=\chi_{\Omega_\ell}(x)=\begin{cases}1,& x\in \Omega_\ell,\\ 0,&x\notin \Omega_\ell,\end{cases}
$$
in \eqref{w-shepardform} to obtain
\begin{equation}\label{PUweight_const}
w_\ell(x) =\begin{cases} \frac{1}{|I(x)|}, & x\in \Omega_\ell,\\0,& x\notin \Omega_\ell,\end{cases}
\end{equation}
where $|I(x)|$ is the cardinality of (number of indices in) $I(x)$. In this case, local approximants $s_\ell^L(x)$ (or $s_\ell^B(x)$) have the same contribution
in global approximation $s^L(x)$ (or $s^B(x)$).

A simpler scheme will be obtained if we choose the PU weight functions as below. Assume  $\{\Omega_\ell\}_{\ell=1}^{N_c}$ is a covering for $\Omega$
and $\omega_1,\ldots, \omega_{N_c}\in \R^d$ are patches centers. For example, for circular (or spherical in 3D) patches we have $\Omega_\ell = B(\omega_\ell,\rho_\ell)$ where $\omega_\ell$ and $\rho_\ell$ are centers and radiuses, respectively.
We assume the PU weight function $w_\ell(x)$ on $\Omega_\ell$ is defined
to take the constant value $1$ if $\omega_\ell$ is the closest center to $x$ and the constant value $0$, otherwise.
For definition,
let
$$
I_{\min}(x) = \argmin_{\ell\in I(x)}\|x-\omega_\ell\|_2
$$
and $I_{\min,1}(x)$ be the first component of $I_{\min}(x)$, as $I_{\min}(x)$ may contain more than one index $\ell$.
Now, we define the weight function
\begin{equation}\label{PU-weight-const2}
w_\ell(x): = \begin{cases}
1, & \ell = I_{\min,1}(x)\\
0,& \mbox{otherwise}
\end{cases}
\end{equation}
in D-RBF-PU method. With this definition, we give the total weight $1$ to the closet patch and null weights to other patches.
In fact, a local set $X_\ell=\Omega_\ell\cap X$ is a common trial set for all
test points $y_k$ with $\|y_k-\omega_\ell\|_2\leq \|y_k-\omega_j\|_2$ for $j=1,\ldots,N_c$ and $j\neq\ell$.
In another view in a 2D domain, by drawing the {\em Voronoi tiles} of centers $\{\omega_1,\ldots,\omega_{N_c}\}$,
this means that all test points
in tile $\ell$ use the same local set $X_\ell$ as their trial set for approximation.
This results in a variation of D-RBF-PU that is much faster but as sparse as RBF-FD method.

This, also, has a connection to the {\em overlapped RBF-FD} method, recently
introduced in \cite{shankar:2017-1}.
The idea of the overlapped RBF-FD method is to use a common stencil for test points
{located on ball $B(y_k,(1-\gamma)\delta)$ where $\gamma\in[0,1]$ is the overlap parameter.
One needs to loop over all the test points, but it is required that weights computed for any point $y_i\in B(y_k,(1-\gamma)\delta)$ never be recomputed again by some
other stencil $\wt X_j$, $j\neq k$, wherein $y_i\in B(y_j,(1-\gamma)\delta)$. Thus,
the order in which the points are traversed determines the RBF-FD weights assigned to a test point.}
This idea should give a reasonable accuracy compared with the original RBF-FD but will reduce the computational costs (the costs for solving local linear systems), remarkably.
For more details, technical issues for implementation and some applications,
see \cite{shankar:2017-1,shankar-fogelsoin:2018-1,shankar-et-al:2018-1}.
{However, the D-RBF-PU with discontinuous weight \eqref{PU-weight-const2} assigns a unique closest stencil to any test point. Although the Voronoi algorithm is not used directly, the D-RBF-PU uses Voronoi tiles (constructed by patch centers) instead of balls (constructed by test points) allowing to automatically prevent any weight recomputation, and looping over the patch centers instead of the test points.}

Now, we address the question of whether
the standard RBF-FD can be viewed as a special case of the new D-RBF-PU method.
Assume that one uses the {\em heavily} overlapped covering $\{\Omega_\ell\}_{\ell=1}^M$ for $\Omega_\ell=B(\omega_\ell,\delta)$ and $\omega_\ell=y_\ell$ where $y_\ell$ for $\ell=1,\ldots,M$ are all the test points. Also, assume that the PU weight function is defined as in \eqref{PU-weight-const2}.
In this case, $\wt X_k = X_k = \Omega_k\cap X$ and $w_\ell(y_k) = \delta_{\ell k}$, $k,\ell=1,\ldots,M$. Thus,
$$
s^L(y_k) = \sum_{\ell\in I(y_k)}w_\ell(y_k) s^L_\ell(y_k)  =s_k^L(y_k) = Ls_k(y_k) = \sum_{j\in J_k}Lu_j^*(k;y_k)u(x_j) = \xi^Tu|_{\wt X_k}.
$$
Therefore, in this scenario the RBF-FD method is obtained from the D-RBF-PU method.

Finally, we note that in the classical RBF-PU method the smoothness of weight functions determines the global smoothness of the approximation.
Here, no derivatives of weight functions are needed and for the discontinuous weights (such as the above constant-{generated} PU functions) the global approximations
$s^L$ and $s^B$ are not continuous. But, this causes no drawback because $s^L$ and $s^B$ are not required to be differentiated anymore.
Of course, one can use smooth weight functions to obtain smooth approximations $s^L$
and $s^B$, if required.

\subsection{Polyharmonic kernels and scalability}
Although all RBFs in the market \cite{fasshauer:2007-1,wendland:2005-1}
can be used for approximation in local domains $\Omega_\ell\cap \Omega$, in this paper we
employ the {\em polyharmonic} kernel
\begin{equation*}
\varphi_{m,d}(r):=\left\{\begin{array}{ll}r^{2m-d}\log r,& 2m-d \mbox{ even}\\ r^{2m-d},& 2m-d \mbox{ odd}   \end{array}\right.,
\end{equation*}
for integer $m>d/2$ and assume $\phi(x)=\varphi_{m,d}(\|x\|_2)$ for $x\in\R^d$. The polyharmonic kernel
$\varphi_{m,d}$ is (up to a sign) conditionally positive definite of order $n = m-\lceil d/2\rceil+1$ and has the Beppo--Levi space
$$
\mathrm{BL}_{m}(\R^d):=\big\{f\in C(\R^d): D^\alpha f\in L_2(\R^d), \;\forall\,\alpha\in \N_0^d\,\, \mbox{with}\,\, |\alpha|=m\big\}
$$
as its {\em native space} if it is considered as a conditionally positive definite kernel of order $m$. In one and two dimensions $m=n$ while $m>n$ in higher dimensions. This means that to work with the native space $\mathrm{BL}_{m}(\R^d)$ for $d>2$, higher degree polynomials than what are actually needed to guarantee the solvability should be added to the RBF expansion.

A very useful property is that the approximation by polyharmonic kernels is {\em scalable}, allowing to avoid the instability in solving local linear systems for computing $s_\ell$, $s^L_\ell$ and $s^B_\ell$.
Here we describe this in a more detail. Assume that $X$ is a set of points in a domain $\D$ with maximum pairwise distance $h$. Assume further that the polyharmonic approximation of $D^\alpha u(x)$ for a fixed $x\in\D$ is sought.
The polyharmonic interpolation matrix $$\begin{bmatrix}\Phi & P\\ P^T & 0\end{bmatrix}$$ becomes ill-conditioned as $h$ decreases.
The conditioning of this matrix may be measured by the lower bound of $\displaystyle \lambda_{\min}(\Phi):=\min_{v: P^Tv=0}(v^T\Phi v)/v^Tv$. It is proved in \cite[chap. 12]{wendland:2005-1} that $\lambda_{\min}(\Phi)$ behaves as $h^{-4m+2d}$ independent of the number of points in $X$.
However, this problem can be overcome in a beautiful way.
If $X$ is blown up (scaled) to points $\frac{X}{h}$ of average pairwise distance $1$ and Lagrange functions $D^\alpha u_j^*$ are calculated
for the blown-up situation, then the Lagrange functions of the original situation are scaled as $h^{-|\alpha|}D^\alpha u_j^*$.
It is clear that in the blown-up situation the conditioning behaves as $\mathcal O(1)$.
For proofs and more details about the scaling property of polyharmonic kernels, see \cite{davydov-schaback:2019-1,Iske:2003-1,Iske:2013-1}.
This scaling approach works without serious instabilities for localized RBF approximations where $h$ (and the size of local domain) decreases while the number of points in $X$ is fixed; the situation in RBF-FD, RBF-PU, D-RBF-PU and all other local RBF-based methods.
In these cases, if polynomials of order $n$ are appended and if the monomials $\{x^\alpha\}_{|\alpha|< n}$ are used as a basis for $\mathbb P_{n-1}(\R^d)$, then it is better to {\em shift} the points by the center of the local domain and then scale by $h$ to benefit from the local behavior of the monomial basis functions around the origin. Note that, on behalf of the radial part, we are allowed to shift because our approximation space is shift (and rotation) invariant.

\section{Error and stability}\label{sect-error-stability}
A classical error analysis which reflects the consistency and stability bounds on the right-hand side of the final estimation is given in this section.
Since the discretization approach is {\em direct} in the sense of \cite{schaback:2013-1}, the theory given in \cite{schaback:2016-1}
for nodal meshless methods can be adapted for our analysis.
First note that since the square system
of certain meshless methods may be singular, one can apply
{\em overtesting}, i.e. choosing $M$ (the number of test points) larger than $N$ (the number
of trial points), to get a full rank system \cite{schaback:2014-1}. Thus for solvability we assume that the matrix
$A$ is set up by sufficiently thorough testing so that the matrix has rank $N\leqslant M$. The final overdetermined linear system then should be solved by a standard residual minimizer of numerical linear algebra techniques.

If we are looking for the errors in nodal values $\u_{ex}=[u(x_1),\ldots,u(x_N)]^T$, the {\em consistency} is analyzed by finding a sharp upper bound
for
$\|A\u_{ex}-\b\|_q$
where $A$ and $\b$ are the differentiation matrix and the right-hand side vector in \eqref{final-system-short}, respectively, and $\|\cdot\|_q$ is the $q$-norm in $\R^M$. According to the construction, $A({k,:})\u_{ex}=s^L(y_k)$ and $b_k = f(y_k)=Lu(y_k)$ for $y_k\in Y_\Omega$, and, similarly, $A({k,:})\u_{ex}=s^B(y_k)$ and $b_k = g(y_k)=Bu(y_k)$ for $y_k\in Y_\Gamma$.
Here, by $A(k,:)$ we mean the $k$-th row of $A$.
Thus,
for consistency we assume there exist domain and boundary error bounds
\begin{align*}
|s^L(y_k)-Lu(y_k)|\leqslant& {\varepsilon^L(y_k)}, \quad y_k\in Y_\Omega,\\
|s^B(y_k)-Bu(y_k)|\leqslant& {\varepsilon^B(y_k)}, \quad y_k\in Y_\Gamma
\end{align*}
to get
\begin{equation*}
\|A\u_{ex}-\b\|_q \leqslant \|\eep\|_q,
\end{equation*}
where $\eep$ is a vector that consists all $\varepsilon^L(y_k)$ and $\varepsilon^B(y_k)$ values.
Furthermore, assume that $\wh \u$ denotes the vector of approximate nodal values $\wh u_j$ that is obtained by some
residual minimizer numerical linear algebra algorithm that solves the system $A\u=\b$ approximately. If $\wh \u$ is calculated via minimization of the residual $\|A\u-\b\|_q$ over all $\u\in \R^N$ then
\begin{equation*}\label{linear-solver-K}
\|A\wh \u - \b\|_q =\min_{\textbf{\emph{u}}\in\R^N}\|A\u-\b\|_q\leqslant \|A\u_{ex}-\b\|_q. 
\end{equation*}
Finally, for stability we define
\begin{equation*}
C_S(A):= \sup_{\textbf{\emph{u}} \neq 0} \frac{\|\u\|_p}{\|A\u\|_q}
\end{equation*}
which is a finite constant for any $p$ and $q$ norms provided that $A$ is full rank.
Now, we can write
\begin{align*}
\|\u_{ex}-\wh \u\|_p &\leqslant C_S(A)\|A(\u_{ex}-\wh \u)\|_q\\
&\leqslant C_S(A)\big(\|A\u_{ex}-\b\|_q+\|A\wh \u-\b\|_q\big)\\
&\leqslant 2C_S(A)\|A\u_{ex}-\b\|_q\\
&\leqslant 2C_S(A)\|\eep\|_q.
\end{align*}
This is a classical error analysis where the right-hand side contains the product of a stability constant and a consistency bound.
The remaining parts of this section concern the estimations of these ingredients.
\subsection{Consistency}
The following theorem states that the partition of unity approximant is at least as good
as its worst local approximant.
\begin{theorem}\label{thm-PUdirect-err}
Let $\Omega\subset \R^d$ be bounded and $\{\Omega_\ell\}_{\ell=1}^{N_c}$ be an open and bounded
covering of $\Omega$ with a partition of unity $\{w_\ell\}_{\ell=1}^{N_c}$.
If $u$ allows the action of $L$ 
and in each region $\Omega_\ell\cap\Omega$, $Lu$ is approximated
by a function $s^L_\ell$ such that
$$
\|Lu-s^L_\ell\|_{L_\infty(\Omega_\ell\cap\Omega)} \leqslant \varepsilon_\ell^L,
$$
then the global approximant $s^L$ satisfies
\begin{equation}\label{error-bound-Lu-sL}
|Lu(x)-s^L(x)|\leqslant \max_{\ell\in I(x)}\varepsilon_\ell^L{=:\varepsilon^L(x)}, \quad x\in \Omega
\end{equation}
\end{theorem}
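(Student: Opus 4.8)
The plan is to exploit the two defining properties of the partition of unity --- nonnegativity together with the support condition $\mathrm{supp}(w_\ell)\subseteq\Omega_\ell$, and the reproduction of constants $\sum_{\ell}w_\ell(x)=1$ --- to rewrite the global error as a convex combination of the local errors. First I would fix an arbitrary $x\in\Omega$ and use the partition-of-unity identity to insert $Lu(x)$ into the weighted sum, writing $Lu(x)=Lu(x)\sum_{\ell=1}^{N_c}w_\ell(x)=\sum_{\ell=1}^{N_c}w_\ell(x)Lu(x)$. Subtracting the definition $s^L(x)=\sum_{\ell=1}^{N_c}w_\ell(x)s^L_\ell(x)$ then collapses the difference into a single weighted sum, $Lu(x)-s^L(x)=\sum_{\ell=1}^{N_c}w_\ell(x)\big(Lu(x)-s^L_\ell(x)\big)$.

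Next I would reduce the summation range. Because $\mathrm{supp}(w_\ell)\subseteq\Omega_\ell$, we have $w_\ell(x)=0$ whenever $x\notin\Omega_\ell$, that is, whenever $\ell\notin I(x)$; hence only the indices $\ell\in I(x)$ contribute, and the same observation gives $\sum_{\ell\in I(x)}w_\ell(x)=1$. Applying the triangle inequality together with the nonnegativity $w_\ell(x)\geqslant 0$, I would obtain $|Lu(x)-s^L(x)|\leqslant\sum_{\ell\in I(x)}w_\ell(x)\,|Lu(x)-s^L_\ell(x)|$.

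Finally, for each $\ell\in I(x)$ we have $x\in\Omega_\ell\cap\Omega$, so the pointwise factor is controlled by the hypothesis $|Lu(x)-s^L_\ell(x)|\leqslant\|Lu-s^L_\ell\|_{L_\infty(\Omega_\ell\cap\Omega)}\leqslant\varepsilon^L_\ell$. Replacing each $\varepsilon^L_\ell$ by $\max_{\ell\in I(x)}\varepsilon^L_\ell$ and pulling this maximum outside the sum, the surviving weights sum to one, which yields the claimed bound $|Lu(x)-s^L(x)|\leqslant\max_{\ell\in I(x)}\varepsilon^L_\ell$.

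There is no genuine obstacle here --- the argument is the standard ``a partition-of-unity blend is no worse than its worst ingredient'' estimate, and its entire content is a convexity argument resting on $w_\ell\geqslant 0$ and $\sum_{\ell}w_\ell=1$. The only point demanding care is the bookkeeping of the support condition: one must verify that restricting the sum to $\ell\in I(x)$ discards no nonzero contribution and that the surviving weights still sum to exactly one, so that the maximum can legitimately be factored out without incurring an extra constant. The argument for $s^B$ on $\Gamma$ is identical, which is why the theorem is stated only for $L$.
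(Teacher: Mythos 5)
Your proposal is correct and follows essentially the same argument as the paper: both exploit $\sum_{\ell\in I(x)}w_\ell(x)=1$ and $w_\ell\geqslant 0$ to write the error as a convex combination of local errors, bound each pointwise term by the corresponding $L_\infty$ bound $\varepsilon_\ell^L$, and pull out the maximum. The only cosmetic difference is that you spell out the insertion of $Lu(x)$ via the partition-of-unity identity, which the paper leaves implicit in its first inequality.
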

\begin{proof}
Since $\{w_\ell\}$ forms a partition of unity, we simply have for any $x\in\Omega$
\begin{align*}
|Lu(x)-s^L(x)| &\leqslant \sum_{\ell\in I(x)} w_\ell(x) \big|Lu(x)-s^L_\ell(x)\big| \\
&\leqslant  \sum_{\ell\in I(x)} w_\ell(x) \big\|Lu-s^L_\ell\big\|_{L_\infty(\Omega_\ell\cap\Omega)}\\
&\leqslant  \sum_{\ell\in I(x)} w_\ell(x) \varepsilon_\ell^L\\
&\leqslant \max_{\ell\in I(x)}\varepsilon_\ell^L,
\end{align*}
which completes the proof.
\end{proof}

Similarly, if $u$ allows the action of $B$ 
and
$$
\|Bu-s^B_\ell\|_{L_\infty(\Omega_\ell\cap\Gamma)} \leqslant \varepsilon_{\ell}^B\,,
$$
then
the estimation
$$
|Bu(x)-s^B(x)|\leqslant \max_{\ell\in I(x)}\varepsilon_{\ell}^B{=:\varepsilon^B(x)}\,,\quad x\in\Gamma
$$
holds true.

Recall that if the PU functions $w_\ell$ are continuous on the whole $\Omega$, then the global approximation $s^L$ is also continuous. It is not the case for the constant-generated PU weights \eqref{PUweight_const} and \eqref{PU-weight-const2}. {However, the only property that is used in the proof of Theorem \ref{thm-PUdirect-err} is the partition of unity property.}
Thus, the analysis below holds also true for the discontinuous PU weights and in particular for the standard RBF-FD method.

{For weight \eqref{PUweight_const} at each test point $x$ where the number of overlapping patches change, and for weight \eqref{PU-weight-const2} at each point $x$ on the edge of the Voronoi regions of the patch centers,}
we have a discontinuity in $w_\ell(x)$ that imposes a discontinuity in global approximation $s^L(x)$ by a jump value of order
$$\max_{\ell\in I(x)}\varepsilon_{\ell}^L$$
provided that $Lu$ is continuous.

Despite the analysis of the standard derivatives of PU approximation (see \cite[Section 15.4]{wendland:2005-1}), here PU functions $w_\ell$ need no smoothness and controlling assumption such as that given in
\eqref{Dw_ell-bound}.
Besides,
$|I(x)|=\sum \chi_{\Omega_\ell}(x)$ is not assumed to be uniformly bounded in $\Omega$, although
this assumption will increase the computational efficiency of both algorithms.
See \cite[Definition 15.18]{wendland:2005-1} for comparison.

{We will need to work with a variety of Sobolev spaces.
For open set $\Omega\subset\R^d$, $k\in \mathbb N_0$ and $1 \leqslant p<\infty$ the Sobolev spaces $W_p^k(\Omega)$
consist of all
$u$ with weak derivatives $D^\alpha u\in L_p(\Omega)$, $|\alpha|\leqslant k$. The (semi-)norms
$$
|u|_{W^k_p(\Omega)}^p:=\sum_{|\alpha|=k}\|D^\alpha u\|_{L_p(\Omega)}^p  \;\; \mbox{and}\;\; \|u\|_{W^k_p(\Omega)}^p:=\sum_{|\alpha|\leqslant k}\|D^\alpha u\|_{L_p(\Omega)}^p,
$$
are associated with these spaces.
The case $p=\infty$ is defined by
$$
|u|_{W^k_p(\Omega)}:=\sup_{|\alpha|=k}\|D^\alpha u\|_{L_\infty(\Omega)} \;\; \mbox{and}\;\; \|u\|_{W^k_p(\Omega)}:=\sup_{|\alpha|\leqslant k}\|D^\alpha u\|_{L_\infty(\Omega)}.
$$
The Hilbert space $W^k_2(\Omega)$ is usually denoted by $H^k(\Omega)$.
To introduce the Sobolev spaces on the boundary,
we assume that the bounded domain $\Omega$ has a $C^k$ boundary $\partial \Omega$ and $\partial \Omega\subset \sum_{j=1}^K U_j$, where $U_j\subset\R^d$ are open sets. Moreover, we assume that $U_j$ are images of $C^k$-diffeomorphisms $\varphi_j:B\to U_j$  where $B=B(0,1)$ denotes here the unit ball in $\R^{d-1}$. Finally, if we assume that $\{w_j\}$ is a
partition of unity with respect to $\{U_j\}$ then the Sobolev norms on $\partial \Omega$ can be defined via
$$
\|u\|_{W^k_p(\partial\Omega)}^p:=\sum_{j=1}^K \|(uw_j)\circ\varphi_j\|_{W^k_p(B)}^p, \quad \|u\|_{W^k_\infty(\partial\Omega)}:=\sup_{1\leqslant j\leqslant K} \|(uw_j)\circ\varphi_j\|_{W^k_\infty(B)}.
$$
These norms are independent of the chosen atlas $\{U_j,\varphi_j\}$.
}

From here on, we assume that $L$ is a linear differential operator of the form
\begin{equation*}
Lu(x) = \sum_{|\alpha|\leqslant k_L}a_\alpha(x) D^\alpha u(x),\quad x\in\Omega,
\end{equation*}
where $k_L$ is the order $L$ and the coefficients $a_\alpha$ lie in space $L_\infty(\Omega)$.
We can also simply show that
\begin{equation}\label{Lu-bound-u}
\|Lu\|_{L_\infty(\Omega)}\leqslant C_a\|u\|_{W^{k_L}_\infty(\Omega)},
\end{equation}
{where $C_a = \max_{|\alpha|\leqslant k_L}\|a_\alpha\|_{L_\infty(\Omega)}$}.
Since different types of boundary conditions may be imposed on $\Gamma$,
we assume $\Gamma=\Gamma_1\cup\cdots\cup \Gamma_T$ such that $\Gamma_i\cap\Gj=\emptyset$ for $i\neq j$,
and
\begin{equation*}
Bu(x) =\sum_{j=1}^T\chi_{\Gj}(x) \sum_{k=0}^{k_j}b_{j,k}(x) \frac{\partial^k u}{\partial \nu^k}(x), \quad x\in \Gamma,
\end{equation*}
where $\nu$ is the outward normal to the boundary, and $\Gj$ are of smoothness class $C^{k_j}$ for $1\leqslant j\leqslant T$ in which
$k_j$ is the order of $B$ on $\Gj$. Also we assume $b_{j,k}\in L_\infty(\Gj)$. Here $\chi_{\Gj}(x)$ is the characteristic function of set $\Gj$, i.e., it is $1$ if $x\in\Gj$ and $0$ otherwise.
It is not difficult to prove
that
\begin{equation}\label{Bu-bound-u}
\|Bu\|_{L_\infty(\Gj)}\leqslant C_{b_j}\|u\|_{W^{k_j}_\infty(\Gj)},\; 1\leqslant j\leqslant T.
\end{equation}
{where $C_{b_j} = \max_{1\leqslant k\leqslant k_j}\|b_{j,k}\|_{L_\infty(\Gj)}$.}
If we set $k_B=\max\{k_j:1\leqslant j\leqslant T\}$ and assume $\Gamma$ is of smoothness class $C^{k_B}$ then the norm on the right-hand side of
\eqref{Bu-bound-u} can be overestimated by $\|u\|_{W^{k_B}_\infty(\Gj)}$.

In order to achieve high order convergence, the regularity of true solution $u$ needs
to be higher than what is strictly required by the problem itself.
In the following we assume
{that the domain, the boundary, the boundary conditions and the right-hand side function $f$ allow the unique solution}
$u\in H^m(\Omega)\cap W_\infty^{k_B}(\Omega)$
for some $m> k_L+d/2$.

The local bounds $\varepsilon_\ell^L$ and $\varepsilon_{\ell}^B$ for polyharmonic kernels in the Sobolev norms can be estimated by applying the following theorem.
\begin{theorem}\cite[Theorem 11.36]{wendland:2005-1}\label{thm-polyharmonic-err1}
Let $m>k+d/2$. Suppose that $\D\subset \R^d$ is open and bounded and satisfies
an interior cone condition. Consider the polyharmonic kernel $\varphi_{m,d}$
as conditionally positive
definite of order $m$. Then the error between $u\in H^m(\D)$ and its polyharmonic interpolant $s$ on $X\subset\D$ can be
bounded by
$$
|u-s|_{W_p^k(\D)}\leqslant C h_{X,\D}^{m-k-d(1/2-1/p)_{+}}|u|_{H^{m}(\D)}
$$
for $1\leqslant p\leqslant\infty$ and sufficiently small $h_{X,\D}$. If we use norms instead of seminorms we have
$$
\|u-s\|_{W_p^k(\D)}\leqslant C h_{X,\D}^{m-k-d(1/2-1/p)_{+}}\|u\|_{H^{m}(\D)}.
$$
Here, by $x_{+}$ we mean $\max\{x,0\}$.
\end{theorem}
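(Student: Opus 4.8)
The plan is to reduce the interpolation error bound to a \emph{sampling inequality} (a zero estimate) combined with the variational optimality of the interpolant in the native space. Writing $e:=u-s$, the crucial observation is that $s$ interpolates $u$ on $X$, so $e$ vanishes on the node set $X$. The first step is therefore to establish a sampling inequality of the form
\begin{equation*}
|f|_{W_p^k(\D)} \leqslant C\Big( h_{X,\D}^{\,m-k-d(1/2-1/p)_{+}}\,|f|_{H^m(\D)} + h_{X,\D}^{-k}\,\|f|_X\|_{\ell_\infty(X)}\Big)
\end{equation*}
valid for every $f\in H^m(\D)$ and sufficiently small $h_{X,\D}$, where the exponent already matches the claimed rate. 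Applying this to $f=e$ and using $e|_X=0$ kills the second term and leaves
\begin{equation*}
|u-s|_{W_p^k(\D)} \leqslant C\,h_{X,\D}^{\,m-k-d(1/2-1/p)_{+}}\,|u-s|_{H^m(\D)}.
\end{equation*}

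Next I would bound $|u-s|_{H^m(\D)}$ by a constant times $|u|_{H^m(\D)}$. Because the polyharmonic kernel $\varphi_{m,d}$, taken as conditionally positive definite of order $m$, has the Beppo--Levi space $\mathrm{BL}_m(\R^d)$ as its native space with seminorm equivalent to $|\cdot|_{H^m}$, the interpolant $s$ is the minimal-seminorm interpolant: among all $g\in \mathrm{BL}_m(\R^d)$ with $g|_X=u|_X$, it minimizes $|g|_{\mathrm{BL}_m}$. To invoke this I first extend $u$ to some $Eu\in H^m(\R^d)$ via a bounded Sobolev extension operator, available because $\D$ satisfies an interior cone condition, with $\|Eu\|_{H^m(\R^d)}\leqslant C\|u\|_{H^m(\D)}$. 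Minimality then gives $|s|_{\mathrm{BL}_m}\leqslant |Eu|_{\mathrm{BL}_m}\leqslant C\|u\|_{H^m(\D)}$, and the triangle inequality yields $|u-s|_{H^m(\D)}\leqslant |u|_{H^m(\D)}+|s|_{H^m(\D)}\leqslant C\|u\|_{H^m(\D)}$. Substituting back produces the norm version of the estimate; the seminorm version follows by a more careful Bramble--Hilbert accounting that retains only $|u|_{H^m(\D)}$ on the right.

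The hard part is the sampling inequality itself, and I expect it to be the main obstacle. The standard route is a \emph{local-to-global} argument: cover $\D$ by overlapping regions of diameter comparable to $h_{X,\D}$, each containing enough nodes of $X$ to be $\mathbb P_{m-1}(\R^d)$-unisolvent. On each piece one combines a local polynomial reproduction with a Bramble--Hilbert estimate to control $f$ by its order-$m$ derivatives plus its nodal values, and a Markov-type inverse inequality to pass from function values to $W_p^k$ seminorms, which is what produces the factor $h^{-k}$. The term $d(1/2-1/p)_{+}$ enters precisely here: on a piece of diameter $\sim h$, converting the $L_2$-based control of the $H^m$ part into the $L_p$ output norm costs a factor $h^{-d(1/2-1/p)}$ when $p>2$ (and nothing when $p\leqslant 2$), by the scaling of Lebesgue norms on small domains. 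Summing the local estimates over the covering, using that the overlap multiplicity is uniformly bounded and that all local Markov and Bramble--Hilbert constants are uniform, and tracking the powers of $h$ carefully, is where the technical difficulty concentrates. The cone condition is indispensable throughout, since it simultaneously guarantees the local unisolvency, the uniformity of the local constants, and a controlled covering with bounded overlap.
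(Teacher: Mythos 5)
The paper offers no proof of this theorem: it is quoted directly as Theorem 11.36 of \cite{wendland:2005-1}, so the only proof to compare against is the one in that reference. Your proposal reconstructs essentially that same argument --- a zeros/sampling inequality on cone-condition domains (Wendland's ``Sobolev bounds for functions with scattered zeros,'' proved by exactly the covering, local polynomial reproduction, and inverse-inequality machinery you sketch), applied to $e=u-s$ which vanishes on $X$, combined with the native-space minimality of the polyharmonic interpolant in $\mathrm{BL}_m(\R^d)$ and an extension step, with the seminorm version recovered via invariance under subtracting $\mathbb P_{m-1}$ polynomials --- so your outline is correct and follows the standard Duchon-style route of the cited source.
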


The case $p=\infty$ reduces to error bound
$$
|u-s|_{W_\infty^k(\D)}\leqslant C h_{X,\D}^{m-k-d/2}|u|_{H^{m}(\D)},
$$
and will be used in the sequel.
If polynomials of higher orders are appended or the target function has an arbitrary smoothness, the analysis given in
\cite{davydov-schaback:2019-1} for `optimal stencils in Sobolev spaces' can be adapted, instead.
According to \cite{davydov-schaback:2019-1}, for
$u\in H^m(\D)$, $m>k+d/2$, the achievable rate for a large enough stencil $X\subset \D$ turns out to be
\begin{equation}\label{optimal_rate_stencils}
|u-s|_{W_\infty^k(\D)}\leqslant C\begin{cases}h^{m-k-d/2}|u|_{H^m(\D)}, & m<q+d/2,\\
h^{q-k}|u|_{H^m(\D)}, & m>q+d/2,\\
h^{m-k-d/2-\epsilon}|u|_{H^m(\D)}, & m=q+d/2,\, \epsilon \mbox{ arbitrary small},
\end{cases}
\end{equation}
where $h$ is the stencil size and $q$ is the maximal order of polynomials on which the approximation is exact.
This means that one cannot have a convergence rate better than $m - k -d/2$ for functions in $H^m(\D)$, no matter how many nodes are used for approximation, where they are placed and how large $q$ is chosen. On the other side, for a fixed node set $X$ the convergence
rate in any Sobolev space $H^m(\D)$ cannot be better than $q - k$ no matter how large $m$ is.
{This optimal convergence rate in $H^m(\D)$ can be obtained via polyharmonic kernels $\varphi_{m,d}$ provided that the underlying set allows exactness on
polynomials of order $q = \lfloor m - d/2\rfloor  + 1$. If the target function lies in some $H^n(\D)$ with $n>m$, and if we manage
to have a scalable stencil of polynomial exactness $q>n-d/2$,
no matter how we get it, maybe from
polyharmonics $\varphi_{m,d}$ with a smaller $m$ but additional polynomials up to order $q$, then according to \cite{davydov-schaback:2019-1} we have a stencil with optimal order in $H^n(\D)$. If $q<n-d/2$ the order remains at $q-k$ no matter how large $n$ is.
Thus the error bound \eqref{optimal_rate_stencils} by replacing $m$ by $n$ is still valid for $u\in H^n(\D)$. This means that in a PHS approximation if the target function is smooth enough then the order of convergence is fully determined by the amount of appended polynomials, no matter how large or small the exponent of the radial part is. Of course, the radial part determines the minimal order of polynomials that should be augmented to the expansion to obtain a unique stencil.
}

To treat the approximation at the boundary points, we need a kind of {\em trace theorem} holding for infinity norms.
If $\D$ is a bounded and open set in $\R^d$ with a $C^1$ boundary then
the {\em Morrey's inequality} implies that there exists $C_M>0$ such that for all $u\in W^1_\infty(\D)$
$$
\|u\|_{C^{0,1}(\D)}\leqslant C_M \|u\|_{W^1_\infty(\D)},
$$
where $C^{0,1}(\D)$ is the space of Lipschitz functions on $\D$. Since $u$ is bounded and Lipschitz, we can extend its domain to $\overline \D$ by continuity. Hence we have a trace operator $W^1_\infty(\D)\to L_\infty(\partial \D)$ with
$$
\|u|_{\partial \D}\|_{L_\infty(\partial\D)}\leqslant \|u\|_{L_\infty(\D)}\leqslant \|u\|_{W^1_\infty(\D)}.
$$
Considering the first inequality, this also shows that if $\D$ has a $C^{k+1}$ boundary and $u\in W^{k+1}_\infty(\D)$ then
\begin{equation*}
\|u\|_{W^k_\infty(\partial\D)}\leqslant C \|u\|_{W^k_\infty(\D)}.
\end{equation*}

Now, using Theorems \ref{thm-PUdirect-err} and \ref{thm-polyharmonic-err1} and the above discursion, we have the following error estimation.
\begin{theorem}\label{thm-LusL-error}
Let $\Omega\subset \R^d$ be an open and bounded domain with a $C^{k_B+1}$ boundary. Let $\{\Omega_\ell\}_{\ell=1}^{N_c}$ be an open and bounded
covering of $\Omega$ with PU functions $w_\ell$.
Suppose that all sets $\Omega_\ell\cap\Omega$ satisfy
interior cone conditions. Obtain all local approximants $s^L_\ell$ and $s^B_\ell$ using the polyharmonic kernel $\varphi_{m,d}$
where it is considered as a conditionally positive definite of order $m$.
If $s^L$ and $s^B$ are the direct PU approximations of $Lu$ and $Bu$, respectively, then
\begin{align*}
|Lu(x)-s^L(x)|&\leqslant C h_{X,\Omega}^{m-k_L-d/2}\|u\|_{H^m(\Omega)}, \quad x\in \Omega,\\
|Bu(x)-s^B(x)|&\leqslant C h_{X,\Omega}^{m-k_B-d/2}\|u\|_{H^m(\Omega)}, \quad x\in \Gamma,
\end{align*}
hold for sufficiently small fill distance $h_{X,\Omega}$ and all $u\in H^m(\Omega)\cap W_\infty^{k_B}(\Omega)$ with $m>k_L+d/2$.
{If, in additions, polynomials of higher order $q$ are appended to the PHS kernel $\varphi_{m,d}$ and
$u\in H^{n}(\Omega)\cap H^{k_B}(\Omega)$, $n>k_L+d/2$, then
\begin{align*}
|Lu(x)-s^L(x)|&\leqslant C
\begin{cases}
h_{X,\Omega}^{n-k_L-d/2}\|u\|_{H^{n}(\Omega)}, & n<q+d/2,\\
h_{X,\Omega}^{q-k_L}\|u\|_{H^{n}(\Omega)}, & n>q+d/2,\\
h_{X,\Omega}^{n-k_L-d/2-\epsilon}\|u\|_{H^{n}(\Omega)}, & n=q+d/2,\, \epsilon \mbox{ arbitrary small},
\end{cases}
\end{align*}
provided that the stencil sizes are proportional to the fill distance $h_{X,\Omega}$. The same bound holds true for $|Bu(x)-s^B(x)|$ by replacing $k_L$ by $k_B$.
}
\end{theorem}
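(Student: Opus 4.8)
The plan is to bootstrap from the abstract partition-of-unity bound of Theorem~\ref{thm-PUdirect-err} down to the concrete polyharmonic estimate of Theorem~\ref{thm-polyharmonic-err1}. By Theorem~\ref{thm-PUdirect-err} it suffices to control each local consistency error $\varepsilon_\ell^L=\|Lu-s^L_\ell\|_{L_\infty(\Omega_\ell\cap\Omega)}$ by a bound uniform in $\ell$, since the maximum over $\ell\in I(x)$ of a common bound is again that bound. The crucial observation, already recorded in Section~\ref{sect-RBF-FD-PU}, is that on the patch the direct local approximant coincides with the differentiated interpolant, $s^L_\ell=Ls_\ell$, where $s_\ell$ is the polyharmonic interpolant of $u$ on $X_\ell$. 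Hence
\begin{equation*}
\varepsilon_\ell^L=\|L(u-s_\ell)\|_{L_\infty(\Omega_\ell\cap\Omega)},
\end{equation*}
and the problem reduces to estimating the interpolation error of $u$ itself in a norm that controls $k_L$ derivatives.

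First I would invoke the operator bound \eqref{Lu-bound-u} on the local domain $\Omega_\ell\cap\Omega$ to pass from $L(u-s_\ell)$ to a Sobolev norm of $u-s_\ell$, giving $\varepsilon_\ell^L\leqslant C_a\|u-s_\ell\|_{W^{k_L}_\infty(\Omega_\ell\cap\Omega)}$. Then I would apply Theorem~\ref{thm-polyharmonic-err1} with $p=\infty$, $k=k_L$, and domain $\D=\Omega_\ell\cap\Omega$ (which satisfies an interior cone condition by hypothesis, and for which $m>k_L+d/2$ is exactly the standing assumption), yielding the local rate $h_{X_\ell,\Omega_\ell\cap\Omega}^{m-k_L-d/2}\,\|u\|_{H^m(\Omega_\ell\cap\Omega)}$. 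The two local quantities are then dominated by global ones: the local fill distance never exceeds the global one, $h_{X_\ell,\Omega_\ell\cap\Omega}\leqslant h_{X,\Omega}$, and $\|u\|_{H^m(\Omega_\ell\cap\Omega)}\leqslant\|u\|_{H^m(\Omega)}$ since $\Omega_\ell\cap\Omega\subseteq\Omega$. As the resulting bound is independent of $\ell$, taking the maximum in \eqref{error-bound-Lu-sL} preserves it and delivers the interior estimate.

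For the boundary estimate I would follow the identical route with $s^B_\ell=Bs_\ell$ and the operator bound \eqref{Bu-bound-u}, except that the local error is now measured on the boundary piece $\Omega_\ell\cap\Gamma$ rather than on the solid patch. This is where the trace inequality recalled just before the theorem enters: with $\D=\Omega_\ell\cap\Omega$ carrying a $C^{k_B+1}$ boundary, it gives $\|u-s_\ell\|_{W^{k_B}_\infty(\Omega_\ell\cap\Gamma)}\leqslant\|u-s_\ell\|_{W^{k_B}_\infty(\partial(\Omega_\ell\cap\Omega))}\leqslant C\|u-s_\ell\|_{W^{k_B}_\infty(\Omega_\ell\cap\Omega)}$, after which Theorem~\ref{thm-polyharmonic-err1} with $k=k_B$ produces the rate $h_{X,\Omega}^{m-k_B-d/2}$, and the maximum over $\ell$ finishes as before.

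The main obstacle I anticipate is precisely this boundary step. One must ensure the trace theorem is legitimately applicable to the local domains: the boundary of $\Omega_\ell\cap\Omega$ is only piecewise smooth, assembled from the smooth part of $\Gamma$ and the smooth sphere $\partial\Omega_\ell$, which meet along lower-dimensional seams; one must argue that the trace estimate survives on the portion $\Omega_\ell\cap\Gamma$ away from these seams, or else invoke the atlas-and-partition-of-unity definition of the boundary Sobolev norm set up earlier in this section. A second, milder point is bookkeeping on the smoothness indices: the boundary rate $m-k_B-d/2$ is meaningful only when $m>k_B+d/2$, which is consistent with $m>k_L+d/2$ under the usual ordering $k_B\leqslant k_L$ of the boundary and interior operators. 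Finally, for the refined estimates I would repeat the argument verbatim, substituting the optimal-stencil bound \eqref{optimal_rate_stencils} (with $m$ replaced by $n$) for Theorem~\ref{thm-polyharmonic-err1}; its three cases pass through the operator and trace inequalities unchanged, since those are insensitive to which local approximation rate is inserted, provided the stencil sizes remain proportional to $h_{X,\Omega}$.
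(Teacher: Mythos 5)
Your interior estimate is exactly the paper's argument: reduce via Theorem \ref{thm-PUdirect-err} to the local errors $\varepsilon_\ell^L$, use $s^L_\ell=Ls_\ell$ on $\Omega_\ell\cap\Omega$, pass to $\|u-s_\ell\|_{W^{k_L}_\infty(\Omega_\ell\cap\Omega)}$ via \eqref{Lu-bound-u}, apply Theorem \ref{thm-polyharmonic-err1} with $p=\infty$, $k=k_L$, and finish with $h_{X_\ell,\Omega_\ell\cap\Omega}\leqslant h_{X,\Omega}$ and $\|u\|_{H^m(\Omega_\ell\cap\Omega)}\leqslant\|u\|_{H^m(\Omega)}$. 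That half is correct and matches the paper, as does your treatment of the refined rates (substituting \eqref{optimal_rate_stencils} with $m$ replaced by $n$).

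The boundary half, however, contains a genuine gap that you flag but do not close. You propose the chain $\|u-s_\ell\|_{W^{k_B}_\infty(\Omega_\ell\cap\Gamma)}\leqslant\|u-s_\ell\|_{W^{k_B}_\infty(\partial(\Omega_\ell\cap\Omega))}\leqslant C\|u-s_\ell\|_{W^{k_B}_\infty(\Omega_\ell\cap\Omega)}$, i.e., you apply the trace inequality to $\D=\Omega_\ell\cap\Omega$. But that set does not have a $C^{k_B+1}$ boundary: its boundary is assembled from a piece of $\Gamma$ and a piece of the sphere $\partial\Omega_\ell$ meeting along a seam, so the hypothesis of the trace inequality fails, and the norm $W^{k_B}_\infty(\partial(\Omega_\ell\cap\Omega))$ is not even well defined in the paper's atlas framework. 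Your two suggested escapes do not repair this: points of $\Omega_\ell\cap\Gamma$ lie arbitrarily close to the seam (so one cannot simply argue ``away from the seams'' with uniform constants), and the atlas definition of the boundary norm does not by itself produce a trace bound from a nonsmooth domain. The paper's resolution is a domain-modification step absent from your proposal: construct open sets $\wt\Omega_\ell$ with $\Omega_\ell\cap\Omega\subseteq\wt\Omega_\ell\subset\Omega$, having $C^{k_B+1}$ boundaries (possible because $\Omega$ itself has a $C^{k_B+1}$ boundary), and satisfying the fill-distance comparability $h_{X_\ell,\wt\Omega_\ell}=C\,h_{X_\ell,\Omega_\ell\cap\Omega}$. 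Then $\Omega_\ell\cap\Gj\subset\partial\wt\Omega_\ell$, the trace inequality legitimately gives $\|u-s_\ell\|_{W^{k_B}_\infty(\partial\wt\Omega_\ell)}\leqslant C\|u-s_\ell\|_{W^{k_B}_\infty(\wt\Omega_\ell)}$, and Theorem \ref{thm-polyharmonic-err1} is applied on $\wt\Omega_\ell$ (not on $\Omega_\ell\cap\Omega$); the comparability of fill distances is precisely what keeps the final rate at $h_{X,\Omega}^{m-k_B-d/2}$. Without this enlargement, the inequalities you wrote for the boundary term are not justified as stated.
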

\begin{proof}
To prove the first error bound, according to Theorem \ref{thm-PUdirect-err}, it is sufficient to estimate the upper bounds $\varepsilon_\ell^L$.
Using the fact that $s^L_\ell = Ls_\ell$ on $\Omega_\ell\cap\Omega$, we can write for any $\ell\in\{1,\ldots,N_c\}$,
\begin{align*}
\|Lu-s^L_\ell\|_{L_\infty(\Omega_\ell\cap\Omega)}&=\|Lu-Ls_\ell\|_{L_\infty(\Omega_\ell\cap\Omega)}\\
&\leqslant C_a \|u-s_\ell\|_{W^{k_L}_{\infty}(\Omega_\ell\cap\Omega)}\\
&\leqslant C_a Ch_{X_\ell,\Omega_\ell\cap\Omega}^{m-k_L-d/2}\|u\|_{H^m(\Omega_\ell\cap\Omega)}\\
&=:\varepsilon_\ell^L,
\end{align*}
where \eqref{Lu-bound-u} and Theorem \ref{thm-polyharmonic-err1} are applied in the second and third lines, respectively.
Then $h_{X_\ell,\Omega_\ell\cap\Omega}\leqslant h_{X,\Omega}$ and $\|u\|_{H^m(\Omega_\ell\cap\Omega)}\leqslant \|u\|_{H^m(\Omega)}$
finish the proof.
For the  error estimation on the boundary, we first modify the local domains $\Omega_\ell\cap\Omega$ to some open domains $\wt \Omega_\ell$
such that $\Omega_\ell\cap\Omega\subseteq\wt \Omega_\ell\subset\Omega$,
$h_{X_\ell,\wt\Omega_\ell}=C h_{X_\ell,\Omega_\ell\cap\Omega}$ and $\wt\Omega_\ell$ have $C^{k_B+1}$ boundaries. Then
\begin{align*}
\|Bu-s^B_\ell\|_{L_\infty(\Omega_\ell\cap\Gj)}&=\|B u-Bs_\ell\|_{L_\infty(\Omega_\ell\cap\Gj)}\\
&\leqslant C_{b} \|u-s_\ell\|_{W^{k_B}_{\infty}(\Omega_\ell\cap\Gj)}\\
&\leqslant C_{b} \|u-s_\ell\|_{W^{k_B}_{\infty}(\partial\wt\Omega_\ell)}\\
&\leqslant C_{b} \|u-s_\ell\|_{W^{k_B}_{\infty}(\wt\Omega_\ell)}\\
&\leqslant C_{b} Ch_{X_\ell,\wt\Omega_\ell}^{m-k_B-d/2}\|u\|_{H^m(\wt\Omega_\ell)}\\
&=:\varepsilon_{\ell}^B,
\end{align*}
where $C_b$ is the maximum of $C_{b_j}$'s. Finally, $h_{X_\ell,\wt\Omega_\ell}=Ch_{X_\ell,\Omega_\ell\cap\Omega}\leqslant Ch_{X,\Omega}$ and $\|u\|_{H^m(\wt\Omega_\ell)}\leqslant \|u\|_{H^m(\Omega)}$
complete the proof.
We note here that, not only the boundary points on $\Omega_\ell\cap\Gj$ but also
the interior points in $\Omega_\ell\cap\Omega$ are all contributed in the approximation process of $Bu(x)$ for $x\in\Omega_\ell\cap\Gj$.
Thus, the final bound should contain the domain fill distance $h_{X,\Omega}$, as it does.

{
The proof of the second bound is the same by using the discussion right after Theorem \ref{thm-polyharmonic-err1}.
}
\end{proof}

Theorem \ref{thm-LusL-error} proves that $\varepsilon^L(y_k) = \mathcal O(h_{X,\Omega}^{m-k_L-d/2})$ for $\{k: y_k\in Y_\Omega\}$ and
$\varepsilon^B(y_k) = \mathcal O(h_{X,\Omega}^{m-k_B-d/2})$ for $\{k: y_k\in Y_{\Gamma}\}$ provided that $u\in H^m(\Omega)\cap W_\infty^{k_B}(\Omega)$.
In general, $\|\eep\|_\infty=\mathcal O(h_{X,\Omega}^{m-k_L-d/2})$ as $k_B<k_L$ and $h_{X,\Omega}$ is assumed to be sufficiently small.
If $u$ is smooth enough, $L$ and $B$ are scalable and polynomials of higher order $q$ are appended to the PHS expansion then the rates will be improved to
$h_{X,\Omega}^{q-k}$ for $k=k_L, k_B$, and the consistency order $\|\eep\|_\infty=\mathcal O(h_{X,\Omega}^{q-k_L})$ will be resulted.

\subsection{Stability}
Despite the lack of a theoretical bound even for simple operators $L=\Delta$ and $B=Id$, Schaback \cite{schaback:2016-1} has proposed some
numerical estimators for the stability constant $C_S(A)$ for an arbitrary matrix $A$.
For example, in case $p=q=2$,
$$
C_S(A)=\left(\min_{1\leqslant j\leqslant N}\sigma_j\right)^{-1}
$$
for the $N$ positive {singular values} $\sigma_1,\ldots,\sigma_N$ of $A$, and these are obtainable by
singular value decomposition (SVD).
Also, the $(q,p)$-norm of the pseudoinverse of $A$, defined by
$$
\|A^\dag\|_{q,p}:=\sup_{\emph{\textbf{u}} \,\neq 0}\frac{\|A^\dag \u\|_q}{\| \u\|_p},
$$
overestimates $C_S(A)$. Finally,
a simple possibility, restricted to square systems, is to use the fact that \textsc{Matlab}'s
$\texttt{condest}$ command estimates the $L_1$ condition number, which is the $L_\infty$ condition number
of the transpose. Thus
\begin{equation*}
\wt C_S(A):=\frac{\texttt{condest}(A^T)}{\|A\|_\infty}
\end{equation*}
is an estimate of the $L_\infty$ norm of $A^{-1}$. This is computationally very cheap for sparse
matrices, however an extension to
non-square matrices is missing.

Although numerical results of section  \ref{sect-numerical-results} show an excellent stability for special $L$ and $B$ operators,
it is left for a future work to theoretically estimate $C_S(A)$ in terms of discretization parameters and behaviour of operators.
This is an open problem not only for the method of this paper but also for all previous unsymmetric local meshless (RBF-based or else) methods.
{See \cite{davydov:2020-1,tominec-et-al:2020-1} for recent attempts to tackle a similar problem in least squares settings for the RBF-FD method.}

\section{{Numerical results}}\label{sect-numerical-results}

In this section, some numerical results of the D-RBF-PU method and comparisons with other local RBF-based methods are given.
We consider the Poisson equation and the standard diffusion (heat) equation with Dirichlet and Neumann boundary conditions (BC) in two and three dimensions.

All algorithms are implemented in \textsc{Matlab} and executed on a machine with an Intel Core i7 processor, 4.00 GHz and
16 GB RAM.

{\em Domains:}
The box domain
$\Omega_B:= (0,1)^2,$
the circular domain
$
\Omega_C:=\{x\in \R^2: \|x\|_2<1\},
$
and the non-convex domain with smooth boundary, defined using polar coordinates as \cite{larsson-et-al:2017-1}
$$
\Omega_S:= \{x=(r,\theta): r< 0.7 + 0.12(\sin 6\theta +\sin 3\theta)=:r_S,\; \theta\in[0,2\pi)\},
$$
are used for experiments in $\R^2$. In $\R^3$ we consider the unit ball
$
\Omega_U:=\{x\in\R^3: \|x\|_2<1\},
$
and the non-convex domain
$$
\Omega_Q: = \{x = (r,\theta,\varphi): r<r_Q(\theta,\varphi),\; \theta\in [0,2\pi), \, \varphi\in [0,\pi]\},
$$
where $r_Q = \big[1 + \sin^2(2 \sin \varphi \cos \theta) \sin^2(2 \sin \varphi \sin \theta) \sin^2(2 \cos \varphi)\big]^{1/2}$.
The 3D domains $\Omega_U$ and $\Omega_Q$ are shown in Figure \ref{fig_domain1}, and the 2D domains $\Omega_C$ and
$\Omega_S$ are shown in Figure \ref{fig_covering1}.
\begin{figure}[!h]
\begin{center}
\includegraphics[width=5cm]{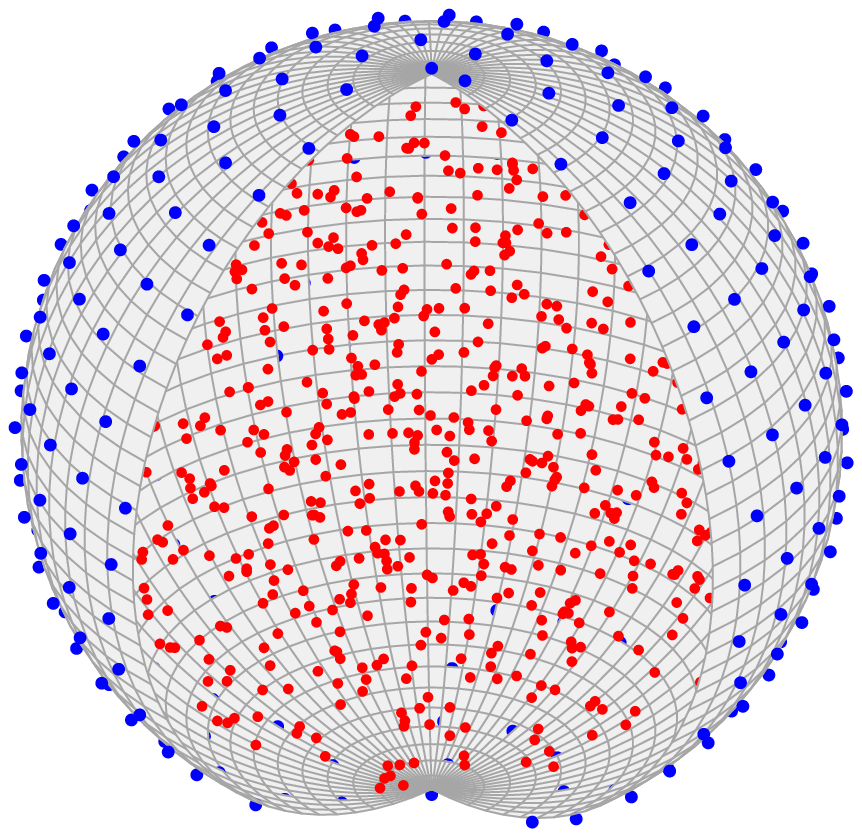}\includegraphics[width=5cm]{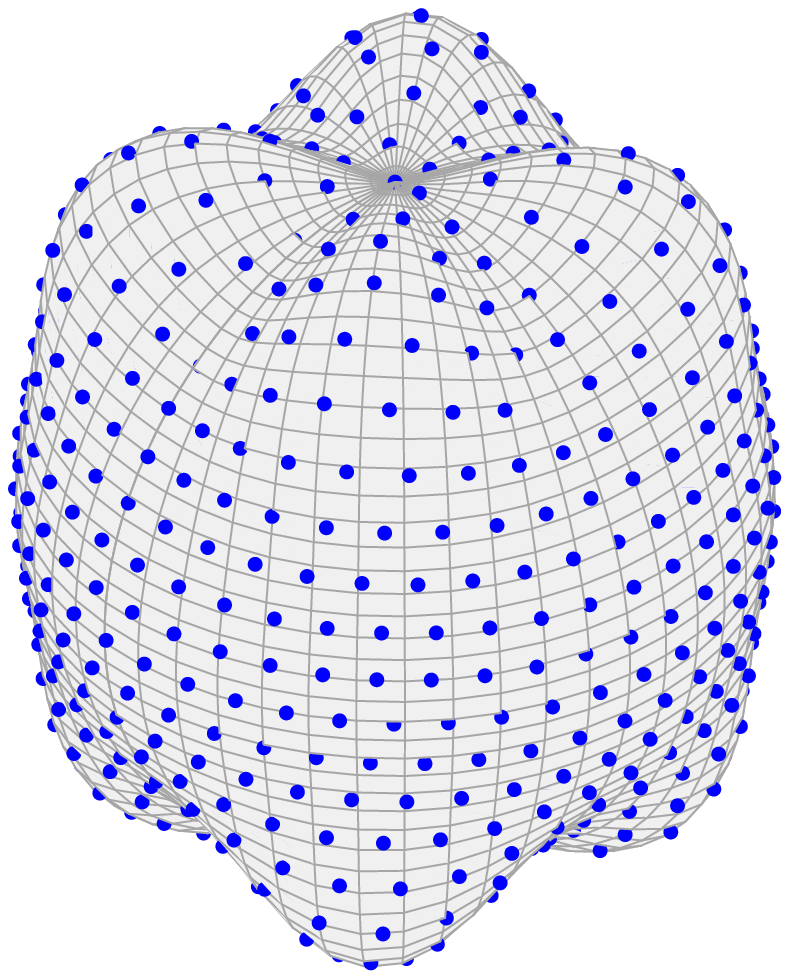}
\caption{\small{Three dimensional domains $\Omega_U$ (left) and $\Omega_Q$ (right), together with boundary points. In the case of $\Omega_U$, some of the internal trial points are shown in red.}}\label{fig_domain1}
\end{center}
\end{figure}
\begin{figure}[!h]
\begin{center}
\includegraphics[width=12cm]{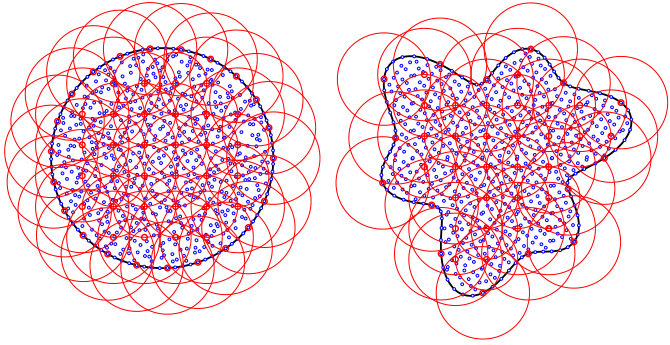}
\caption{\small{690 Halton points on domain $\Omega_C$ (left) and 681 Hammersley points on domain $\Omega_S$ (right), together with circular patches.}}\label{fig_covering1}
\end{center}
\end{figure}

{\em Boundary conditions:}
The Neumann boundary condition is imposed on  top and bottom sides of $\partial\Omega_B$, on upper semi-circle of $\partial\Omega_C$, i.e., on $\{x=(1,\theta)\in \partial\Omega_C: \theta\in[0,\pi]\}$, on upper curve of $\partial\Omega_S$, i.e., on $\{x=(r_S,\theta)\in \partial\Omega_S: \theta\in[0,\pi]\}$, on north surface of $\partial\Omega_U$, i.e., on $\{x=(1,\theta,\varphi)\in \partial\Omega_U: \theta\in [0,2\pi),\, \varphi\in(\pi/2,\pi]\}$, and on north surface of $\partial\Omega_Q$, i.e., on $\{x=(r_Q,\theta,\varphi)\in \partial\Omega_Q: \theta\in [0,2\pi),\,\varphi\in(\pi/2,\pi]\}$. Other parts of boundaries are constrained by Dirichlet boundary conditions.
For comparison in some experiments, we may also use a pure Dirichlet or a pure Neumann boundary condition.

{\em Sets of points:}
Scattered trial and test points with fill distance $h=h_{X,\Omega}$ are used in experiments.
Halton points on $\Omega_B$, $\Omega_C$ and $\Omega_U$, Hammersley points on $\Omega_S$ and gridded points on $\Omega_Q$ are employed.
We construct the points on a cube and use their restriction to the domain $\Omega$.
The fill distance is approximated by $h \approx (c_d N)^{-1/d}$ where $N$ is the number of trial points, $d$ is the dimension, and $c_d=\mathrm{volume}(B)/\mathrm{volume}(\Omega)$ where $B$ here is a smallest possible cube that contains $\Omega$.
Boundary points $\{(1,\theta_k):1\leqslant k\leqslant |Y_\Gamma|\}$ and $\{(r_S,\theta_k):1\leqslant k\leqslant |Y_\Gamma|\}$ are used on
$\partial \Omega_C$ and $\partial \Omega_S$, respectively, where $\{\theta_k\}$ is a set of equidistance points on $[0,2\pi)$.
The boundary points on  $\partial\Omega_U$ are constructed by the {\em equal area partitioning} algorithm \cite{saff-kuijlaars:1997-1}.
On $\partial\Omega_Q$ we use the projected points from $\partial\Omega_U$. See Figure \ref{fig_domain1}.
In all cases, the fill distance on the boundary (the number of points on the boundary) is adjusted to the fill distance of internal points to be approximately of the same size.

The PU covering $\{\Omega_\ell\}=\{B(\omega_\ell,\rho_\ell)\cap\Omega\}$ with centers $\{\omega_1,\omega_2,\ldots,\omega_{N_c}\}\subset\overline \Omega$ is used. Gridded patch centers with {horizontal and vertical} distances $h_{c}$ are used inside $\Omega$, and boundary centers with a distance of order $h_c$ are constructed with the same techniques discussed above for the boundary test points. For points far from the boundary, the radiuses of patches are assumed to be constant (independent of $\ell$) and proportional to $h_{c}$, i.e.,
$\rho_\ell=\rho = C_c h_c$ where
$C_c$ determines the amount of overlap among patches. But, for points on and adjacent to the boundary (up to a radial $h_c$-distance from the boundary) we increase the radius $\rho$ by a factor of $1.5$. See also subsection \ref{sect-overlap-const} below.
In Figure \ref{fig_covering1}, a set of trial points and a covering are shown on domains $\Omega_C$ and $\Omega_S$.

{\em Weight functions:}
As a smooth PU weight, the function
\begin{equation}\label{wend_weight}
\psi_\ell=\psi(\|\cdot-\omega_\ell\|_2/\rho_\ell),\quad \psi(r)=(1-r)_{+}^6(35r^2+18r+3),
\end{equation}
is used in \eqref{w-shepardform} where $\psi(r)$ is the $C^4$ compactly supported Wendland's function \cite[Chap. 9]{wendland:2005-1}.
In some experiments, constant-generated PU functions \eqref{PUweight_const} and \eqref{PU-weight-const2} are also employed.
We will see that in some cases, a combination of the smooth weight (for boundary patches) and the constant-generated weight \eqref{PU-weight-const2}
(for internal patches) increases the efficiency of the method in terms of accuracy, complexity and sparsity.

{\em Kernels:}
Polyharmonic splines $\varphi(r)=r^6\log(r)$ (PHS6) and $\varphi(r)=r^8\log r$ (PHS8) are used in 2D, while $\varphi(r)=r^5$ (PHS5) and $\varphi(r)=r^7$ (PHS7) are applied in 3D cases. These RBFs are conditionally positive definite of orders $n=4$, $5$, $3$ and $4$, respectively. Thus, polynomial spaces $\mathbb P_{n-1}(\R^d)$ are augmented to guarantee the solvability of approximation problems.
However, we will also use polynomials of higher orders to observe the effect of polynomials on the convergence rates.
In the legend of figures, (for example) by PHS5+P2 we mean an approximation through PHS kernel $r^5$ augmented with polynomials of degree at most $2$ (order $3$).

{\em True solutions:}
In 2D, the known Franke's function \cite{franke:1982-1}, and in 3D the function
$$
u(x) = \sin\left(\frac{\pi(x^1-0.5)x^3}{\log(x^2+3)}\right), \quad x=(x^1,x^2,x^3)\in \R^3
$$
are assumed to be the true solutions for the steady state problems \cite{larsson-et-al:2017-1}.
The right-hand side function $f$ and boundary conditions are obtained, accordingly.

For the time-dependent problem in 2D experiments, the prescribed true solution
$$
u(x, t)= 1 + \sin(\pi x^1) \cos(\pi x^2)\exp(-\pi t)
$$
is used \cite{shankar:2017-1}.
The forcing term that makes this solution hold is given by
$f (x, t) = \pi (2\pi\kappa - 1) \sin(\pi x^1) \cos(\pi x^2)\exp(-\pi t)$.
In 3D, we use the true solution
$$
u(x, t)= 1 + \sin(\pi x^1) \cos(\pi x^2)\sin(\pi x^3)\exp(-\pi t)
$$
with
$f (x, t) = \pi (3\pi\kappa - 1) \sin(\pi x^1) \cos(\pi x^2)\sin(\pi x^3)\exp(-\pi t)$.
Boundary conditions are obtained by the restriction of exact solutions and/or their
derivatives on the boundary.

{\em Overtesting:}
Overtesting is not applied at all, because the results show that square systems for both regular and irregular points
are full rank and extremely stable.

{\em Convergence plots:} Since $h=\mathcal O(N^{-1/d})$, we plot the errors and the stability numbers vs. $N^{1/d}$. All convergence plots are on a log-log scale. Numerical convergence orders are obtained by the linear least squares fitting to error values, and are written alongside the figure legends.

\subsection{Overlap constant $C_c$}\label{sect-overlap-const}
As pointed out above, in this study we use an overlapping covering that consists of balls $B(\omega_\ell,\rho_\ell)$ for $\ell=1,\ldots,N_c$.
We assume the set $\{\omega_1,\ldots,\omega_{N_c}\}$ of covering centers has vertical distance $h_c$ which is proportional to fill distance $h$
of trial points $X$. We use $h_c=4h$ in all experiments. The covering radius
$$\rho_\ell=\rho=C_ch_c$$
affects both the accuracy of numerical solution and the sparsity of final linear system. %
For points $\omega_\ell$ on and close to the boundary $\partial \Omega$, the number of trial points $X\cap B(\omega_\ell,\rho)$ is decreased by more than a $\frac{1}{2}$ factor. Thus, we should increase the radius of patches by a factor of more than $2^{1/d}$ ($\approx 1.41$ in 2D and $\approx 1.26$ in 3D).
To be sure that we have enough local trial points, we increase $\rho$ by a factor of $1.5$ for such patches.

\begin{figure}[!h]
\begin{center}
\includegraphics[width=5cm]{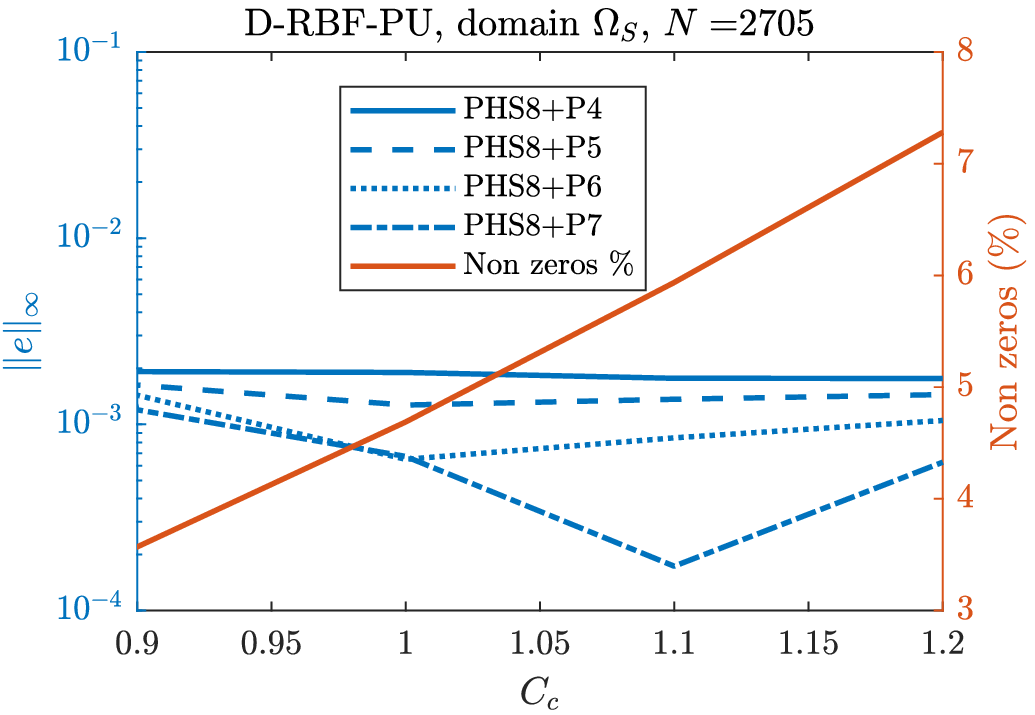}\includegraphics[width=5cm]{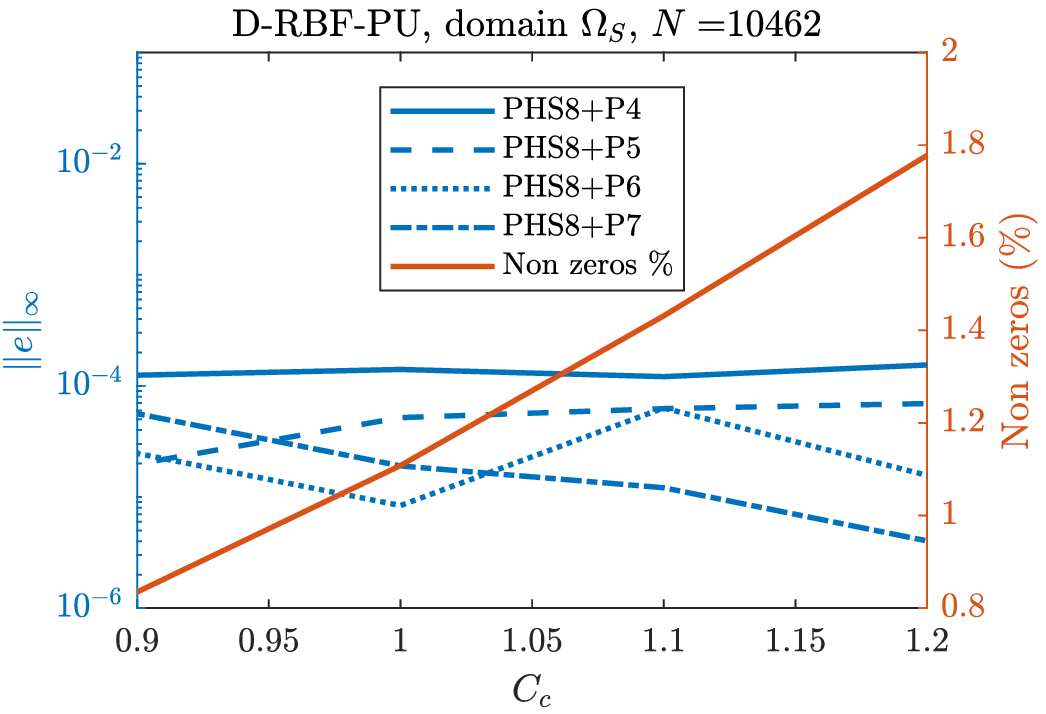}\\
\includegraphics[width=5cm]{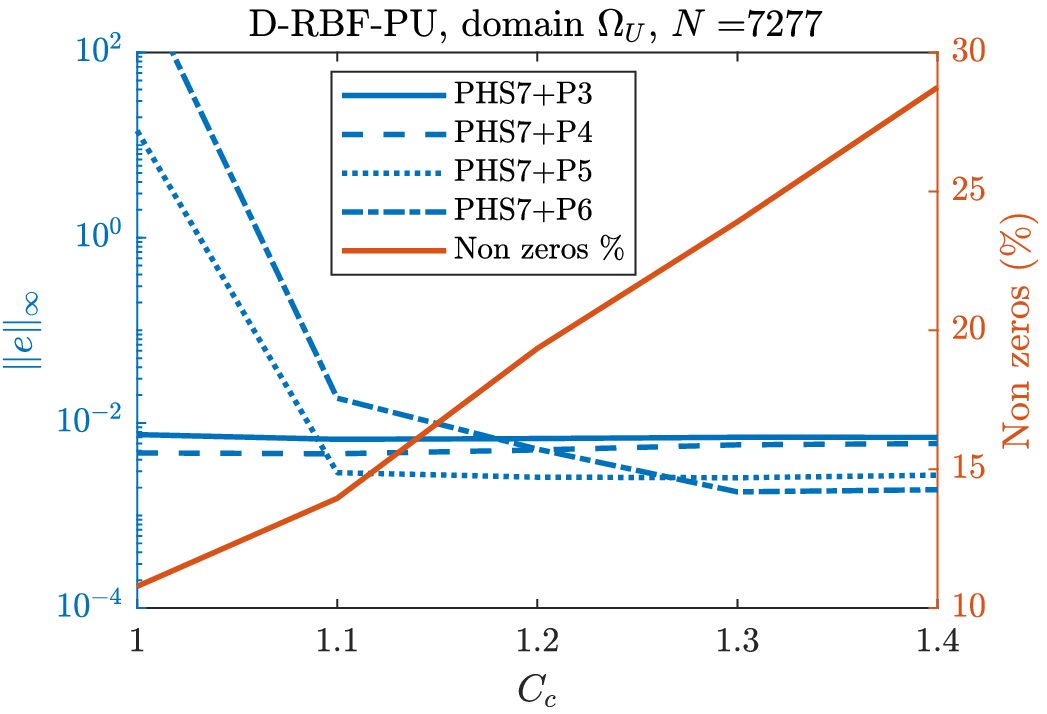}\includegraphics[width=5cm]{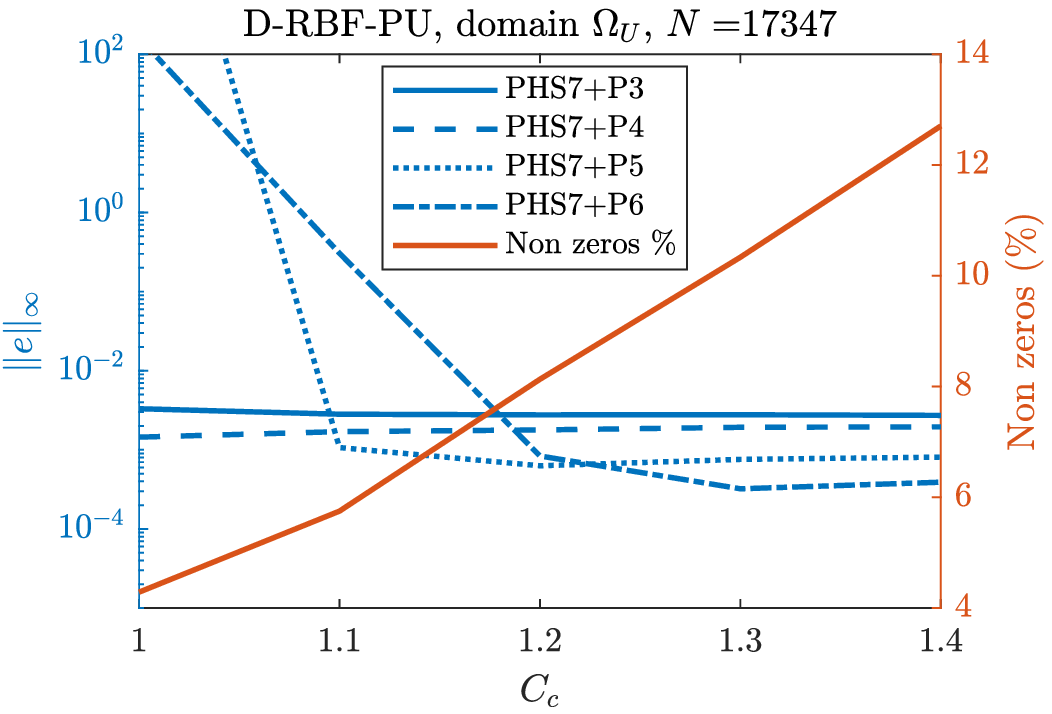}
\caption{\small{Accuracy (left vertical axis) and sparsity (right vertical axis) with respect to the overlap constant $C_c$. First row: PHS8 on 2D domain $\Omega_B$; second row: PHS7 on 3D domain $\Omega_U$. In all cases the smooth PU weight is applied.}}\label{fig_overlap1}
\end{center}
\end{figure}

To choose a proper overlap constant $C_c$, we illustrate some experiments in Figure \ref{fig_overlap1} for 2D (on $\Omega_S$) and 3D (on $\Omega_U$) problems. In this figure, errors (left vertical axis) and percentage of nonzero elements of the final matrix (right vertical axis) are plotted in terms of overlap constant $C_c$.
For smaller values of $C_c$ which are not covered in the plots, local RBF systems may not be full rank.
According to these and other experiments we use $C_c=1.0$ in all cases
except for 3D examples with polynomial spaces of order more than $6$ in which $C_c=1.2$ is used. This choices of the overlap constant make a balance between accuracy and sparsity.

\subsection{Convergence with respect to polynomial degrees}
In Figure \ref{fig_converg1}, the errors and convergence orders of the D-RBF-PU method with respect to the degree of polynomial spaces added to the RBF expansion are shown.
As the true solutions are infinitely smooth and the PDE is of the second order, the theoretical rate of convergence in all cases should be
${q-2}$ where $q$ is the order (degree $+\,1$) of appended polynomials. In most cases, this rate is achieved and in some cases we observe a higher convergence rate.
Corresponding plots for stability constant $\wt C_S$ are presented in Figure \ref{fig_converg2}. In all cases, no significant growth is observed as $N$ is increased. Obviously, this nice feature is inherited from the local property of the approximation method and is shared with other local methods such as RBF-FD.
{In this experiment we have used the smooth PU weight functions.}

\begin{figure}[!h]
\begin{center}
\includegraphics[width=4cm]{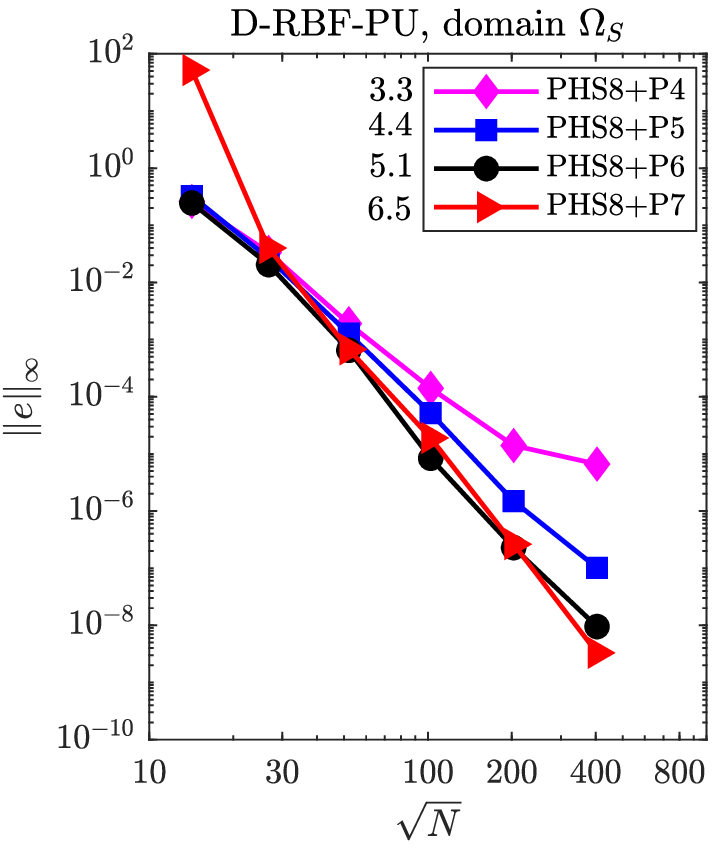}\includegraphics[width=4cm]{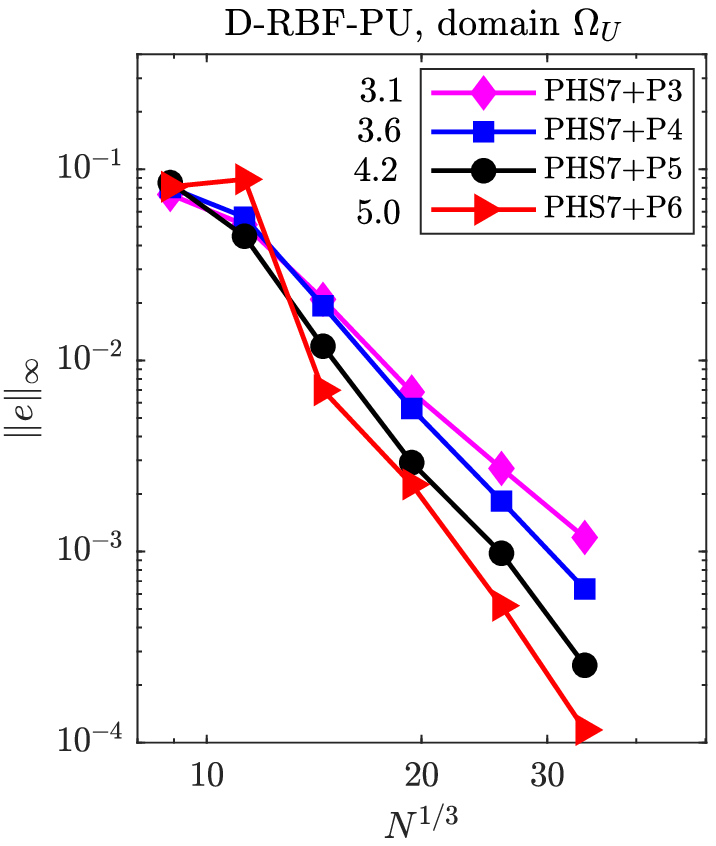}\includegraphics[width=4cm]{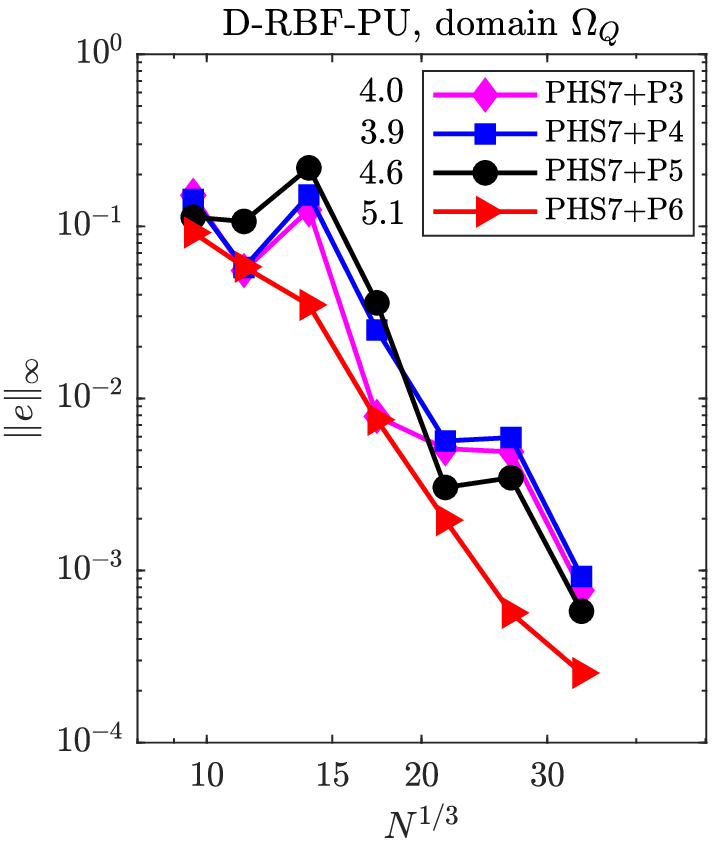}
\caption{\small{Errors and convergence orders of D-RBF-PU method with different polynomial degrees on domains $\Omega_S$ (left), $\Omega_U$ (middle) and $\Omega_Q$ (right). Theoretical orders are $q-2=\wt q-1$ where $\wt q$ is the degree of polynomial space. Here (for example) P3 means polynomial space of degree at most $3$. In all cases the smooth PU weight is applied.}}\label{fig_converg1}
\end{center}
\end{figure}

\begin{figure}[!h]
\begin{center}
\includegraphics[width=4cm]{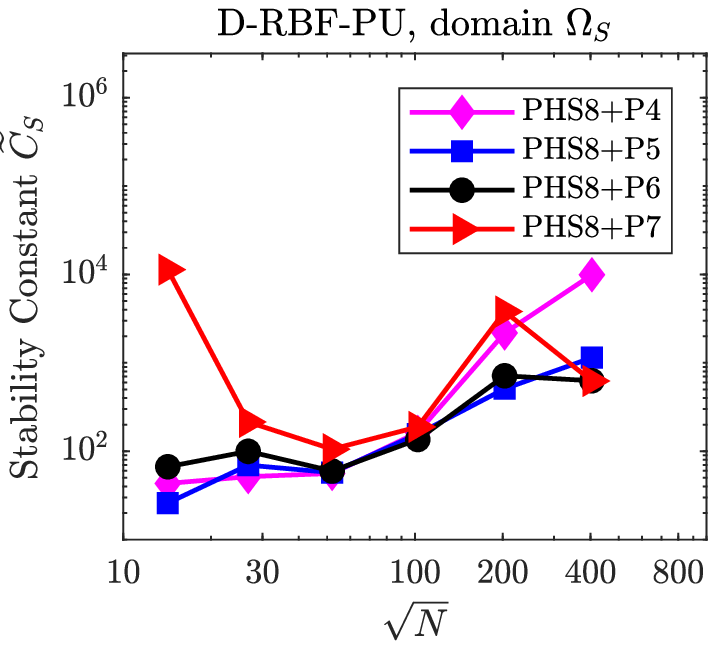}\includegraphics[width=4cm]{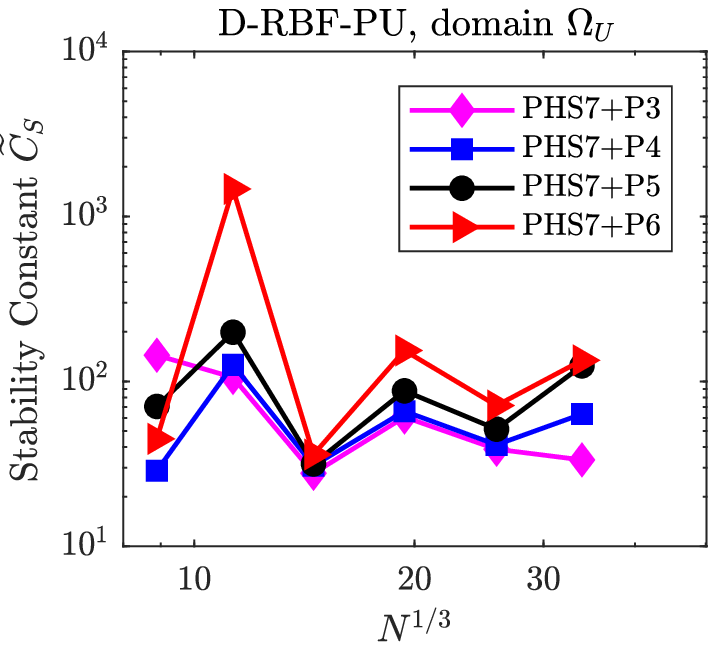}\includegraphics[width=4cm]{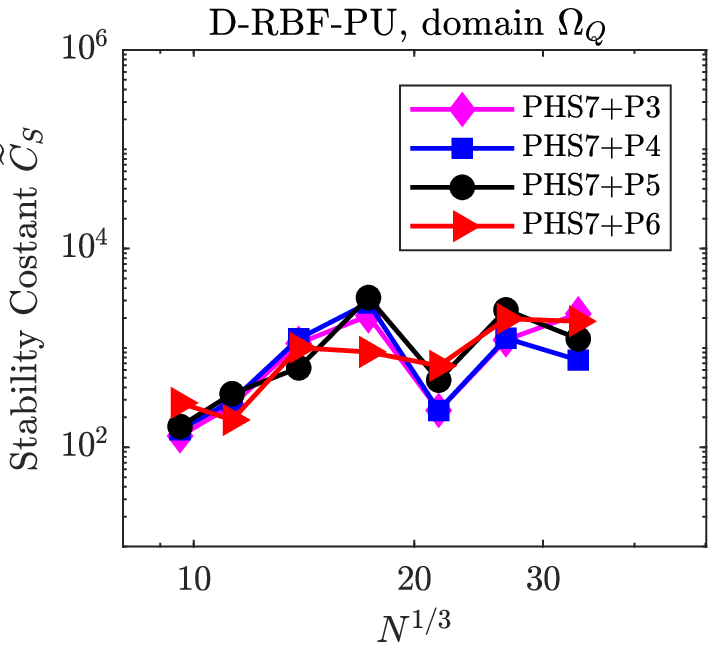}
\caption{\small{The stability constant $\wt C_S(A)$ of D-RBF-PU method with different polynomial degrees on domains $\Omega_S$ (left), $\Omega_U$ (middle) and $\Omega_Q$ (right). In all cases, no significant growth is observed as $N$ is increased. In all cases the smooth PU weight is applied.}}\label{fig_converg2}
\end{center}
\end{figure}

\subsection{Constant{-generated} PU weights}
Results for the constant{-generated} PU weight functions are reported in Figure \ref{fig_const1}.
In texts on figures, by `Const. Gen. PU Weight 1' and `Const. Gen. PU Weight 2' we mean weight functions \eqref{PUweight_const} and \eqref{PU-weight-const2}, respectively, and by `Hybrid PU Weights' we mean a combination of constant-{generated} weight \eqref{PU-weight-const2} for patches with centers inside $\Omega$ and smooth weight \eqref{wend_weight} for patches with centers on $\partial \Omega$.

Experiments show that in some cases with constant-generated PU weights to obtain the theoretical order, the number of patches and/or the size of patches (the overlap constant $C_c$) should be increased. This will increase the computational cost of the method. However,
since this lost of accuracy is caused by approximation at boundary points, a more efficient trick is to use the hybrid weight.
In Figure \ref{fig_const1} (right-hand side plots), an improvement in accuracies and an enhancement in numerical orders are observed by using the hybrid PU weight, where the smooth weight is used for boundary patches.

\begin{figure}[!h]
\begin{center}
\includegraphics[width=4cm]{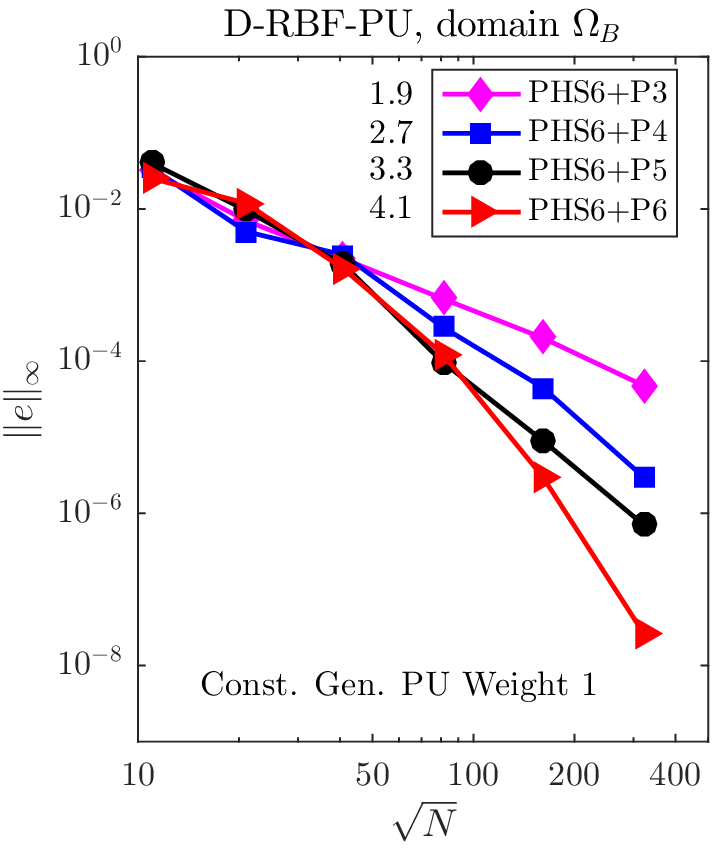}\includegraphics[width=4cm]{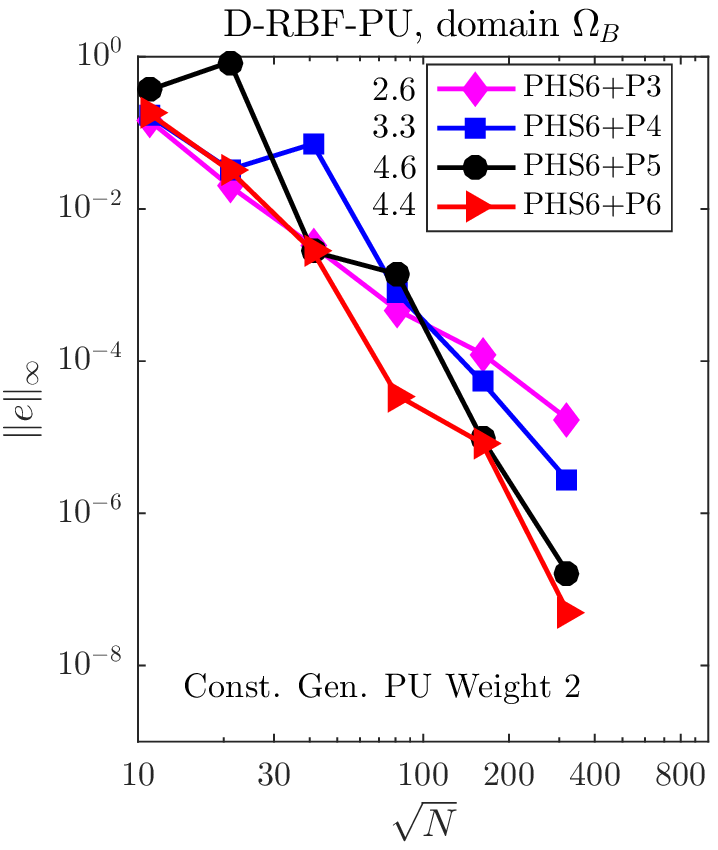} \includegraphics[width=4cm]{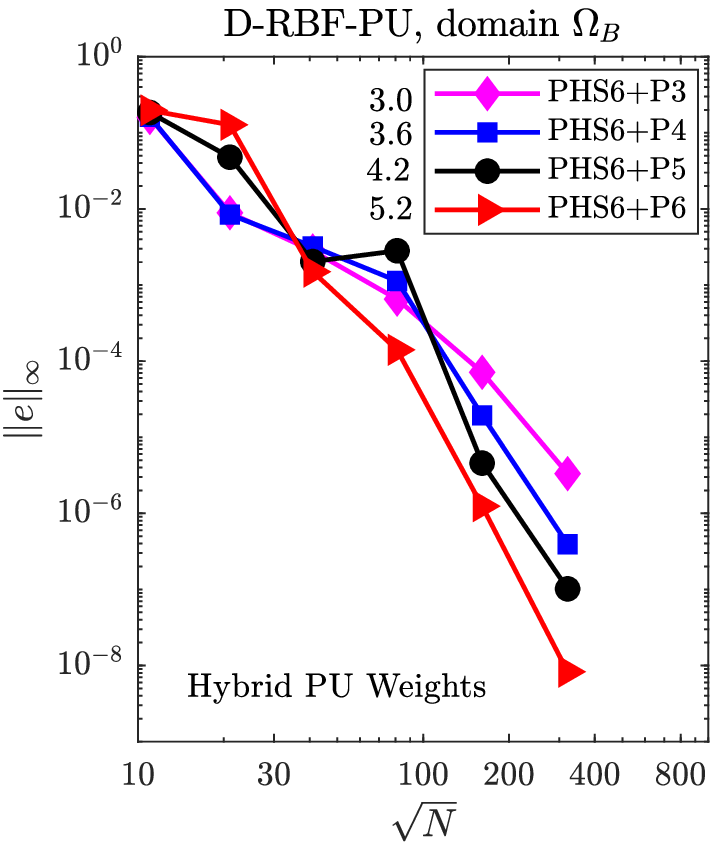}\\
\includegraphics[width=4cm]{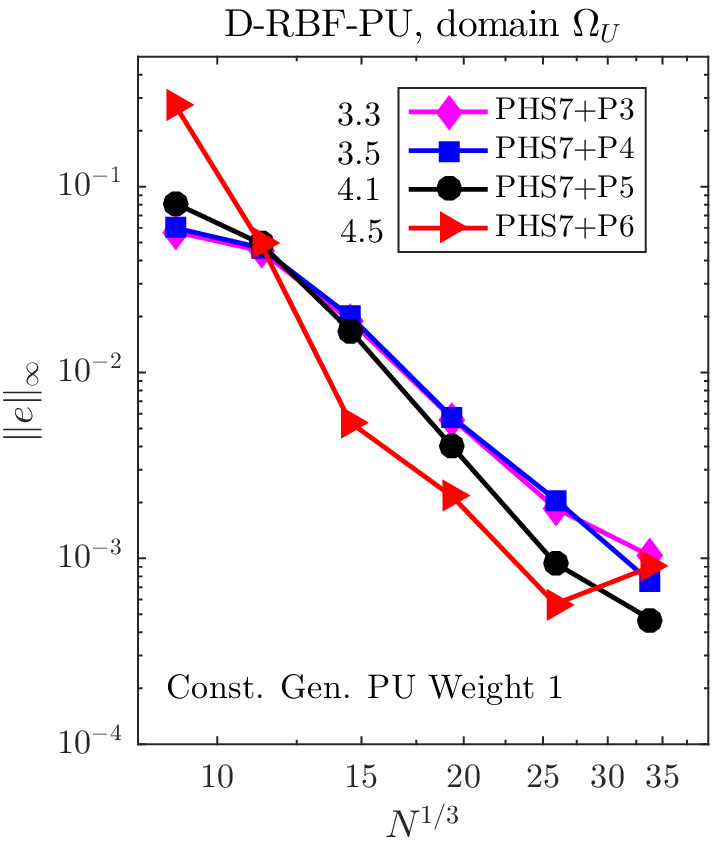}\includegraphics[width=4cm]{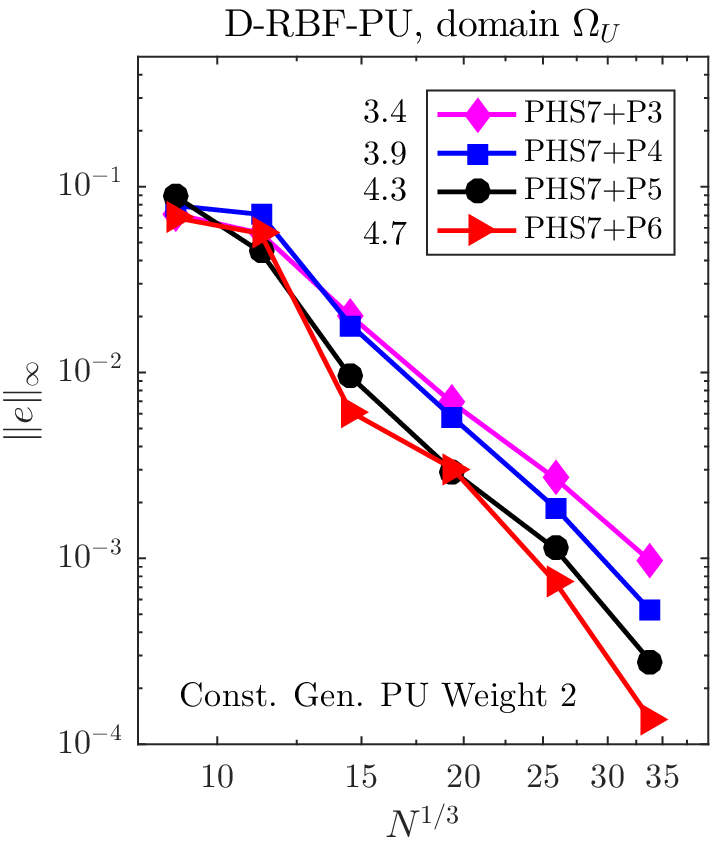} \includegraphics[width=4cm]{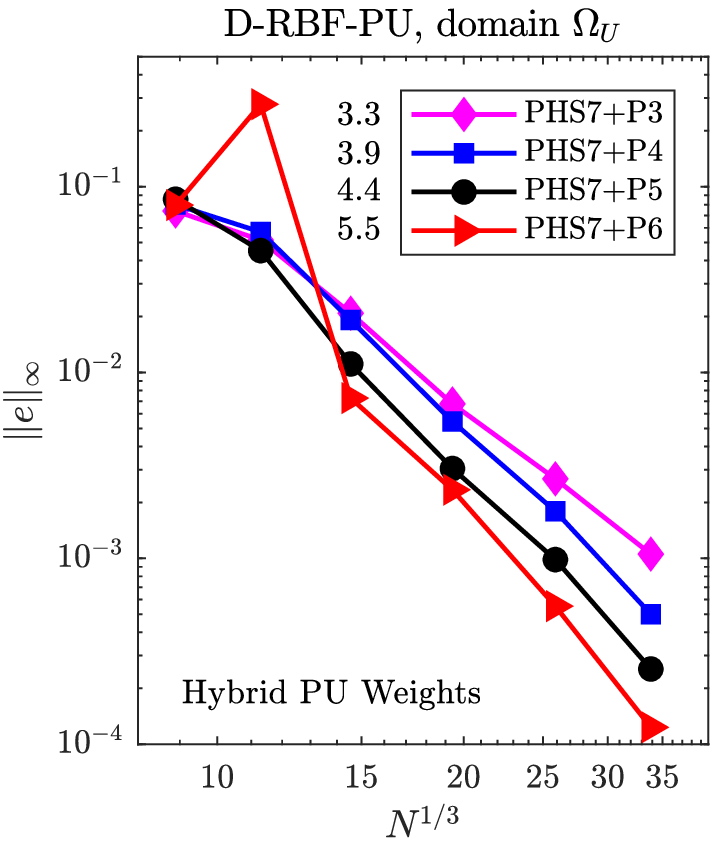}
\caption{\small{Errors and convergence orders of D-RBF-PU method with different polynomial degrees and with constant-generated PU weights on $\Omega_B$ (first row) and  $\Omega_U$ (second row). Theoretical orders are $\wt q-1$ where $\wt q$ is the degree of polynomial space.
Improvements in accuracies and orders are observed by the hybrid PU weight.
}}\label{fig_const1}
\end{center}
\end{figure}

Since, with the constant-generated PU weight \eqref{PU-weight-const2} the method uses a single patch for approximation at each test point, the resulting differentiation matrix is
the sparsest one. Combination with the smooth weight on the boundary does not increase the number of nonzeros, significantly, because the number of boundary points is of order $N^{1-1/d}$.
In Table \ref{tb1} the percentage of nonzero elements of the final matrix on 2D domain $\Omega_S$ and 3D domain $\Omega_U$ are given for three cases of weights and three different numbers of trial points.
In the 2D case, the number of nonzeros is nearly halved when the constant-generated weight is used instead of the smooth weight. The use of hybrid weight does not increase the percentage of nonzeros, remarkably, compared with the constant-generated weight.
In the 3D case, the constant-generated weight reduces the number of nonzeros by a factor of $\frac{1}{4}$, approximately.
Using the hybrid weight, the number of nonzeros is increased by a factor of $1.5$, approximately, but it is still far fewer
than that of the smooth weight. It is obvious that if in the 3D case we increase the number of trial points, the percentages
of the hybrid weight become closer to those of the constant-generated weight.

From the results of Table \ref{tb1} and the error plots of Figure \ref{fig_const1}, we may conclude that the D-RBF-PU method with the hybrid PU weight is a recommendable choice to obtain both reasonable accuracy and sparsity.

\begin{table}[!h]
\centering
\caption{The percentage of nonzero elements of the final matrix with three type of PU weights.}\label{tb1}
\begin{tabular}{|c|c|c|c|c|c|c}
  \hline
   &     & smooth  & const. gen.  & hybrid  \\
 domains  & $N$ &  weight &  weight \eqref{PU-weight-const2} &  weight \\
   \hline
                       &$2705$   &$4.95\,\%$  & $2.15\,\% $ & $2.38\,\% $ \\
  2D domain $\Omega_S$ &$10462$  &$1.15\,\%$  & $0.51\,\% $ & $0.54\,\%$\\
                       &$163554$ &$0.069\,\%$ & $0.031\,\%$ & $0.032\,\% $\\
\hline
                       & $7241$  & $23.4\,\%$   & $6.4\,\%$ &$9.7\,\%$ \\
  3D domain $\Omega_U$ & $17174$ & $10.4\,\%$ & $2.7\,\%$ &$3.9\,\%$ \\
                       & $38765$ & $4.4\,\%$  & $1.1\,\%$ &$1.6\,\%$ \\
\hline
\end{tabular}
\end{table}

The overall behaviour of stability plots for constant-generated weights is approximately the same as that of the smooth weight in Figure \ref{fig_converg2}.
Thus, we do not present them here to keep the total number of figures as low as possible.

\subsection{Comparison with standard RBF-PU}
{We compared the errors and orders of the D-RBF-PU and the standard RBF-PU methods verses
$N$ on different domains and by different kernels.
The accuracies of both methods are close
to each other so that in some cases the plots can not be easily distinguished.
The same holds also true for plots of the stability constants. However, to control the number of figures, we do not present them here.
Our observations} confirm the theoretical bounds of section \ref{sect-error-stability} and suggest to use the new method because it bypasses all derivatives of the PU functions and many lower derivatives of local approximants, while maintaining a similar accuracy and stability rate. Moreover, the new method allows to use a discontinuous weight function, a situation that cannot be treated by the standard RBF-PU method.

\subsection{Comparison with RBF-FD}\label{sect-cmpFD}
In order to compare the new method with RBF-FD, it is important to determine the size of stencils and their connection to the size of local patches in D-RBF-PU.
By the size of a stencil in RBF-FD, which is denoted by $\delta$ here, we mean the radius of the smallest ball that contains all the stencil points.
As same as the strategy we applied for near boundary patches, we increase the radius $\delta$ for test points on and close to the boundary (up to a radial $\delta$-distance from the boundary) by a factor of $1.5$ to have more accurate approximations near the boundary and, in particular, on boundary points.

\begin{figure}[!h]
\begin{center}
\includegraphics[width=4cm]{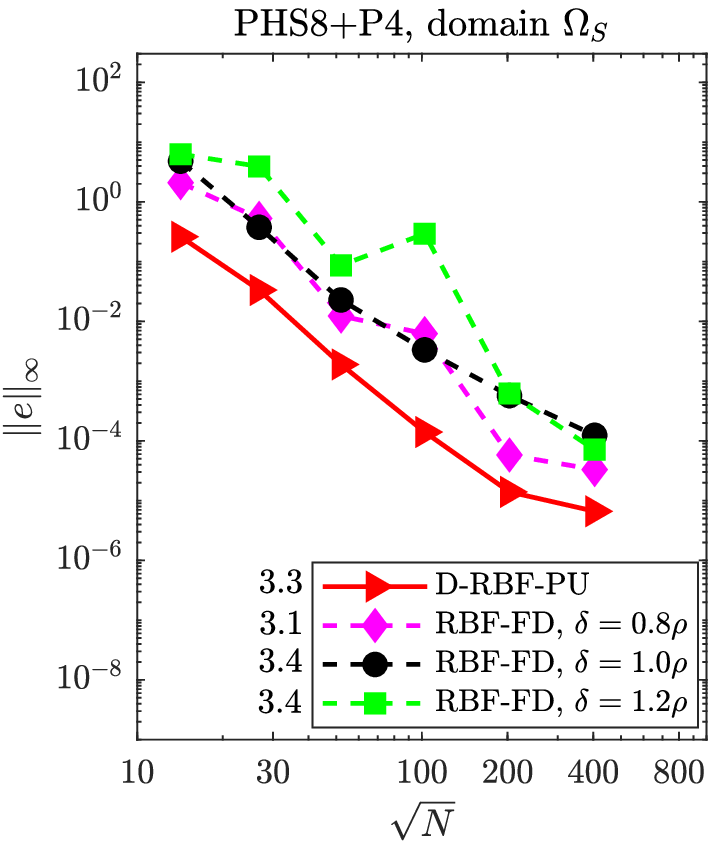}\includegraphics[width=4cm]{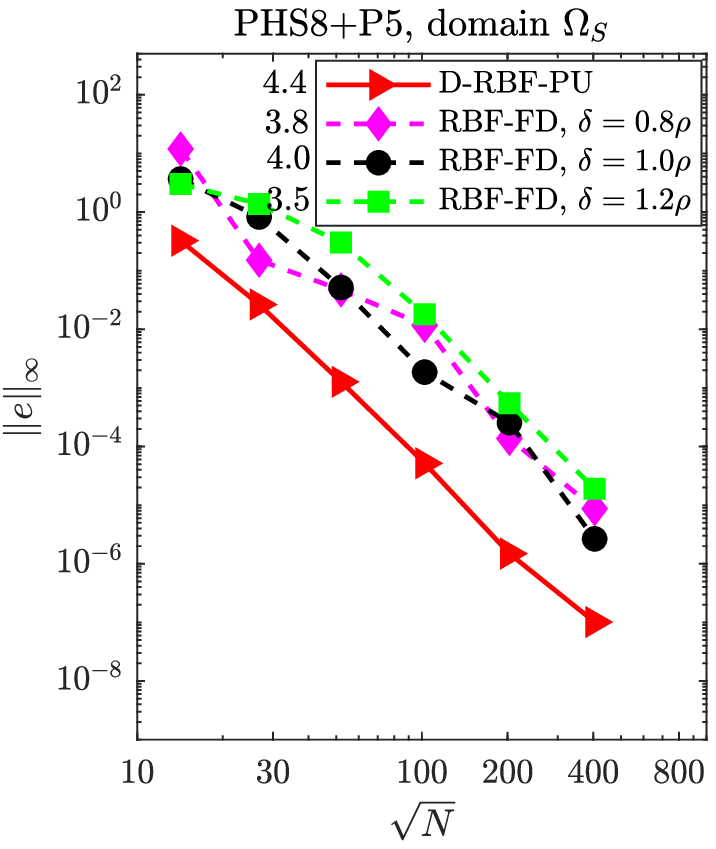}\includegraphics[width=4cm]{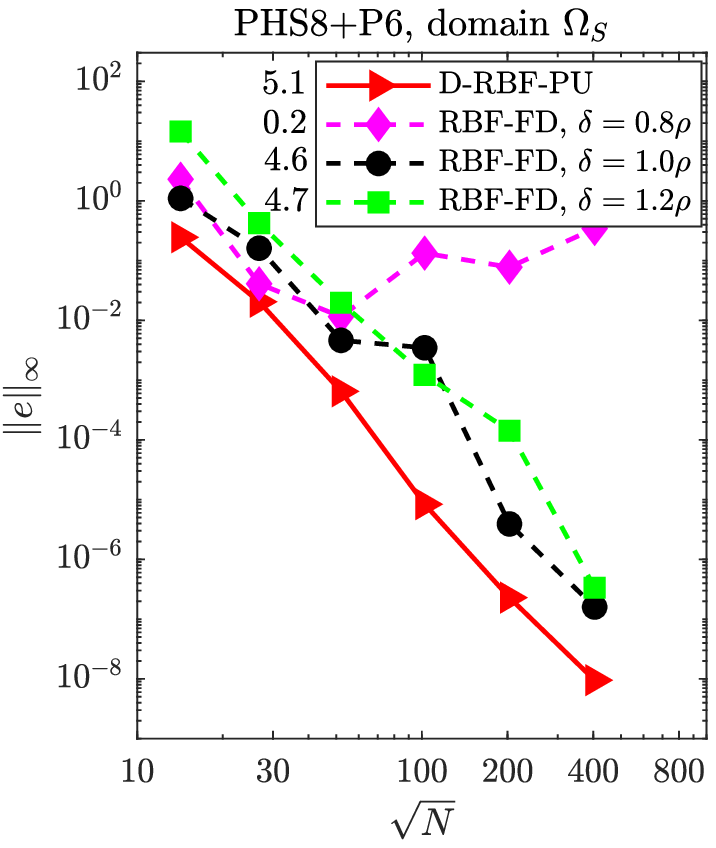}\\
\includegraphics[width=4cm]{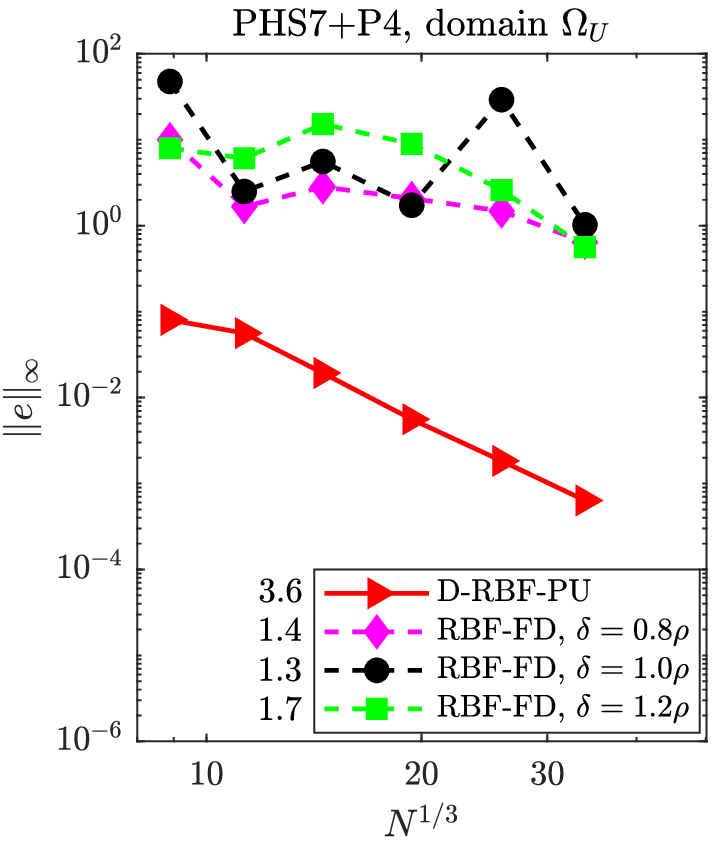}\includegraphics[width=4cm]{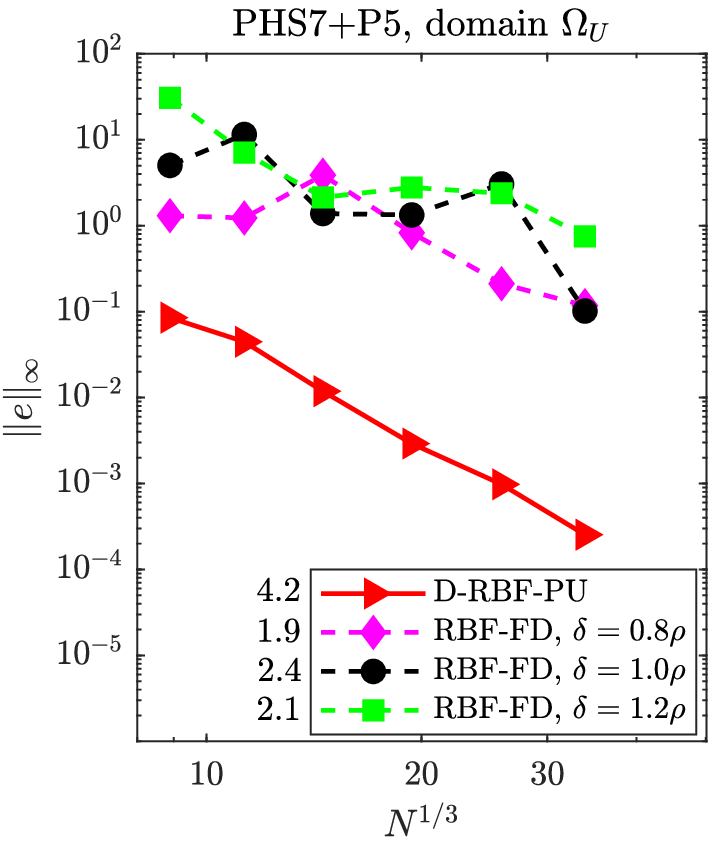}\includegraphics[width=4cm]{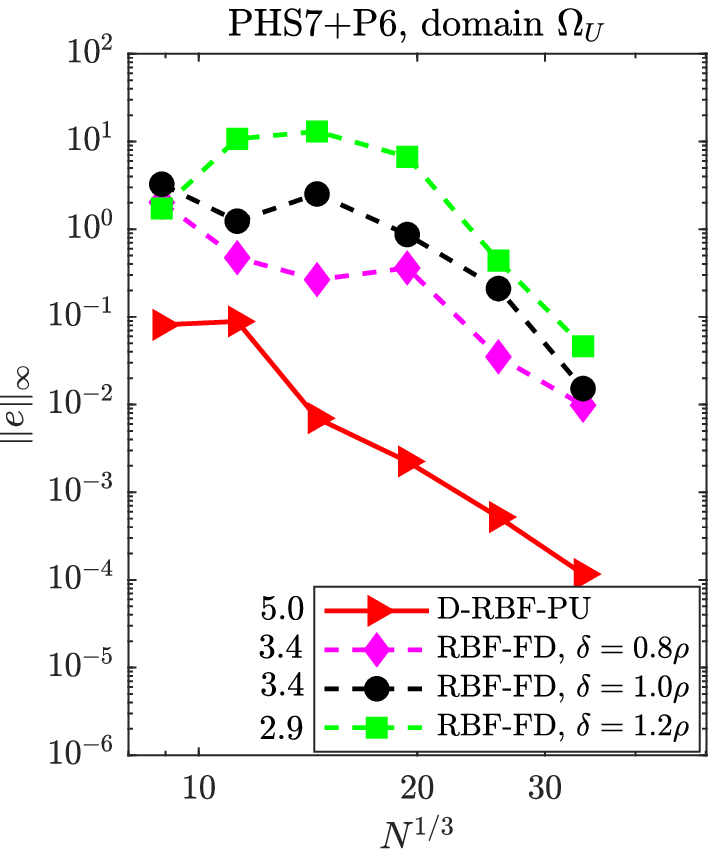}
\caption{\small{Errors and convergence orders of D-RBF-PU (using the smooth PU weight) and RBF-FD (with three different stencil sizes) on 2D domain $\Omega_S$ (first row) and 3D domain $\Omega_U$ (second row). Convergence orders and magnitude of errors are improved in the D-RBF-PU method. Here $\rho$ is the radius of covering patches and $\delta$ is the size of stencils in RBF-FD.}}\label{fig_cmpFD1}
\end{center}
\end{figure}

For each test point, RBF-FD uses a single stencil while D-RBF-PU uses much fewer number of covering patches and joins them
by PU functions.
If we compare with D-RBF-PU with a smooth weight function, we may assume
$
\wt X_k =\displaystyle\{\cup_{\ell\in J_k}X_\ell\} ,
$
where $X_\ell$ is the set of trial points in patch $\Omega_\ell$ and $\wt X_k$ is the stencil of test point $y_k$.
In this case, the same trial points contribute in approximation of $Lu(y_k)$ (or $Bu(y_k)$ if $y_k$ is a boundary point) in both methods.
However, a larger local RBF system \eqref{augmented-system} should be solved in RBF-FD while few (exactly $|J_k|$) number of much smaller systems \eqref{gen-lag-systm} (that may also be shared with other test points) need to be solved in D-RBF-PU. Experiments show that
in this case, the RBF-FD is very slow and its results are far away from the exact solutions because the size of stencils are overestimated. Hence, we do not illustrate the results in this case.
On the other hand, the size of RBF-FD stencils should be large enough to guarantee the solvability of local linear systems.
Here we compare both methods for three different stencil sizes $\delta = 0.8\rho, 1.0 \rho, 1.2\rho$ where $\rho$ is the radius of patches.
Smaller or bigger values of $\delta$ does not payoff for significantly more accurate results.
In Figure \ref{fig_cmpFD1}, we present the results on 2D domain $\Omega_S$ and 3D domain $\Omega_U$.
As we see, D-RBF-PU outperforms almost all cases especially in the 3D problem. While not presented here, the same is observed
on other domains.
In Figure \ref{fig_cmpFD2}, stability plots on $\Omega_U$ are illustrated. Both methods possess a nice stability that does not highly depend on increasing $N$ and polynomial degrees.
{The same behaviour is observed for the 2D case but not illustrated here.} However, in the 3D problem the stability numbers of RBF-FD are approximately $10^2-10^3$ times larger.
\begin{figure}[!h]
\begin{center}
\includegraphics[width=4cm]{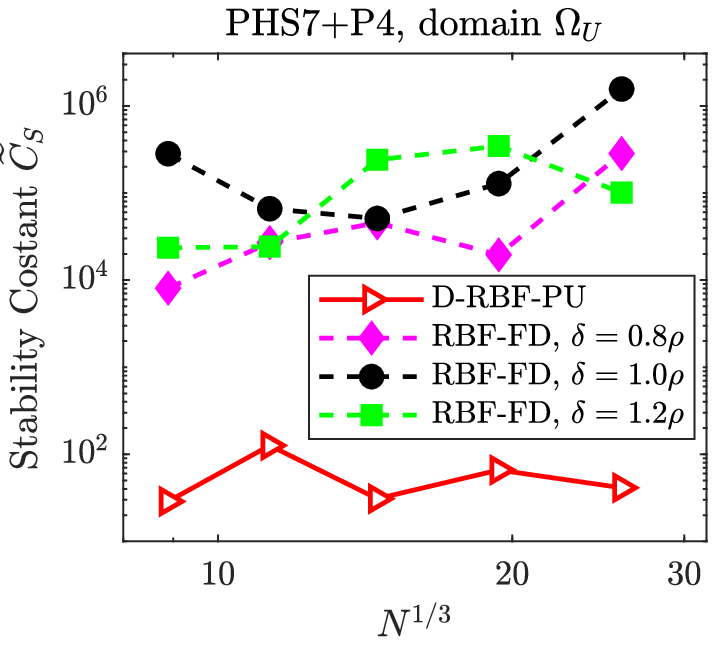}\includegraphics[width=4cm]{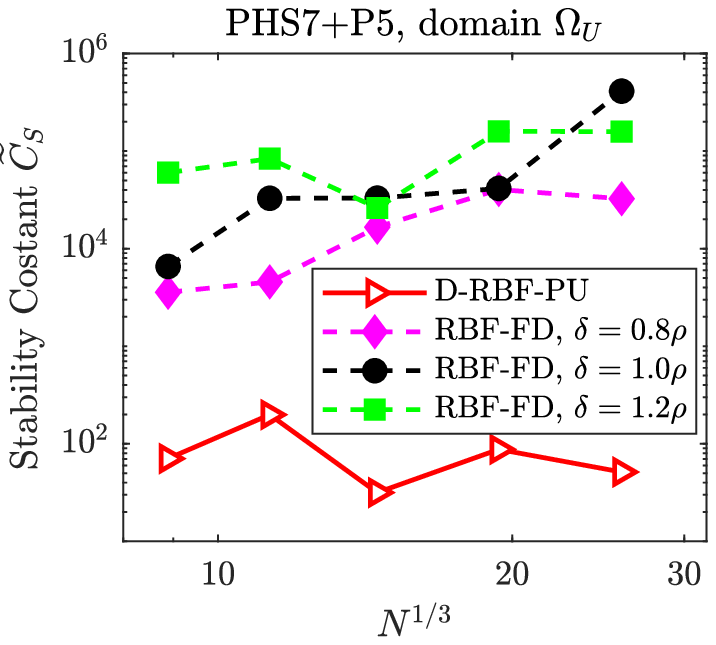}\includegraphics[width=4cm]{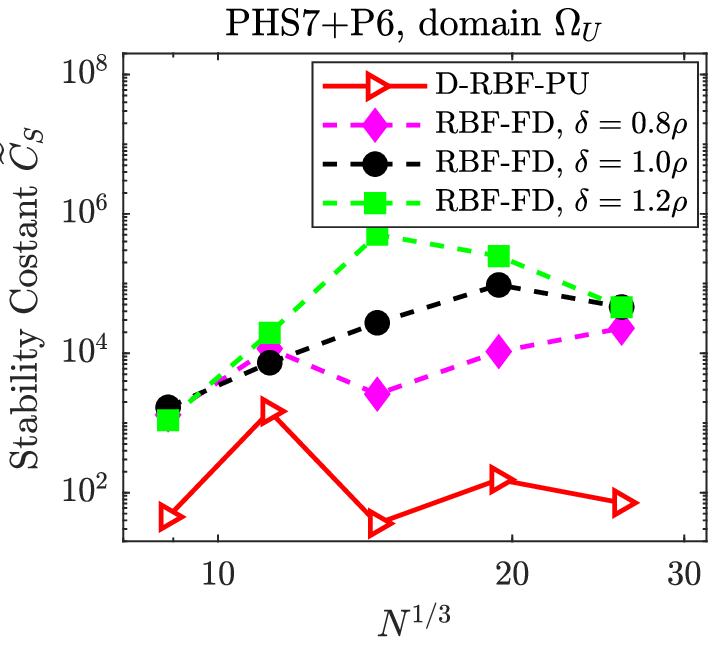}
\caption{\small{The stability constant $\wt C_S$ of global matrix $A$ of D-RBF-PU (using the smooth PU weight) and RBF-FD (with three different stencil sizes) on 3D domain $\Omega_U$. As $N$ is increased, no significant growth is observed in the conditioning of $A$ in both methods, although the stability numbers of RBF-FD are approximately $10^2-10^3$ times larger.}}\label{fig_cmpFD2}
\end{center}
\end{figure}

The above results were obtained for mixed Dirichlet and Neumann boundary conditions.
In Figure \ref{fig_cmpFD3} we show a comparison between the two methods on 3D domain $\Omega_Q$ when Dirichlet boundary condition is imposed on the whole boundary.
In this case, both RBF-FD and D-RBF-PU methods produce approximately the same error and convergence order. We observe the same results on other domains.
Comparing with previous figures, we conclude that
D-RBF-PU is more accurate than RBF-FD in the presence of Neumann boundary conditions. The reason seems to lie behind the fact that at a Neumann boundary point, in which
RBF-FD approximates derivatives by a single one-sided stencil, D-RBF-PU uses a weighted average approximation of several neighborhood patches.

\begin{figure}[!h]
\begin{center}
\includegraphics[width=4cm]{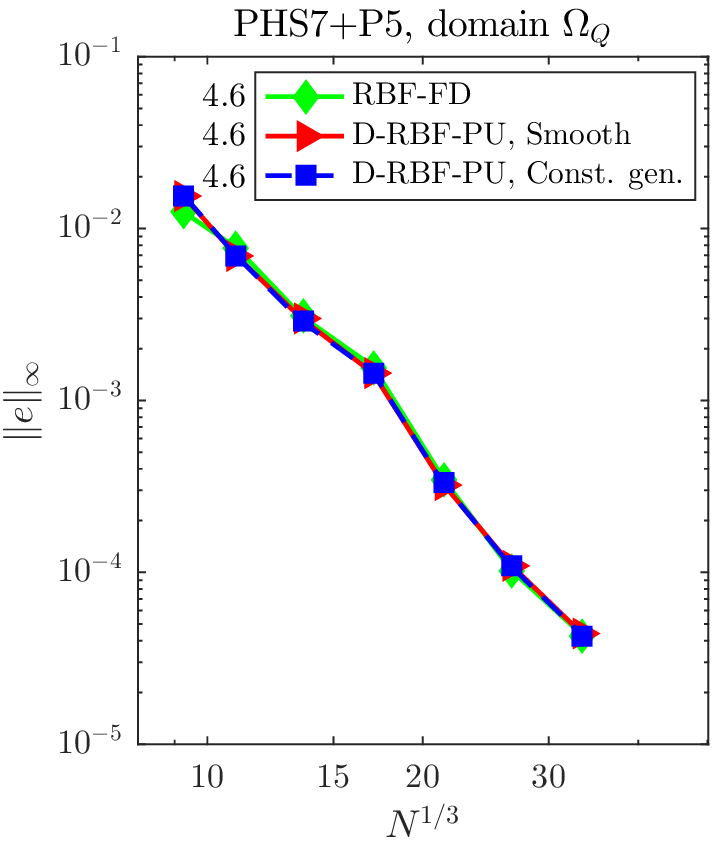}\includegraphics[width=4cm]{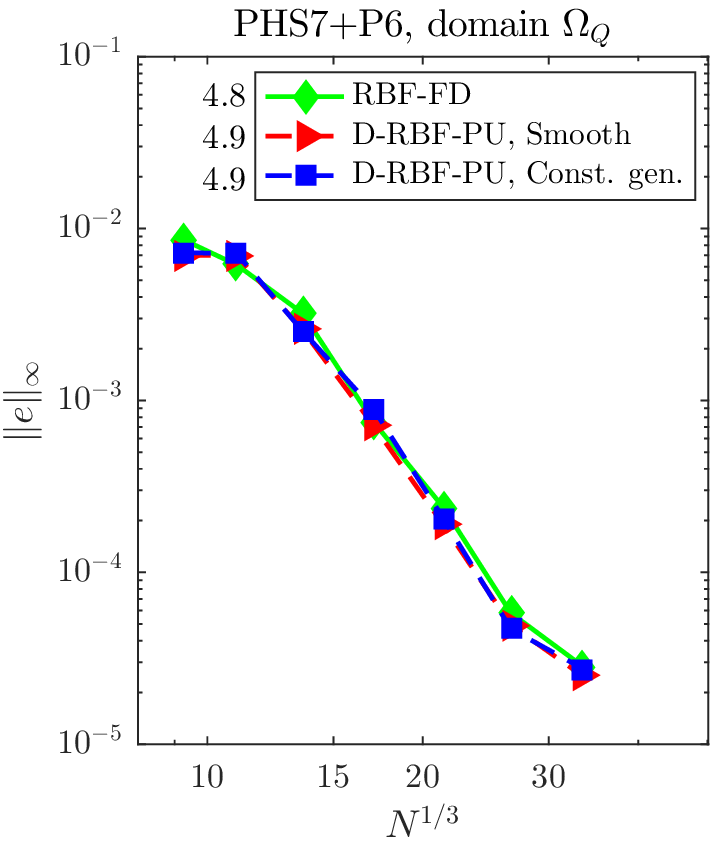}
\caption{\small{Errors and convergence orders of RBF-FD by stencil size $\delta=\rho$ and D-RBF-PU using the smooth and the hybrid PU weights on 3D domain $\Omega_Q$ with Dirichlet boundary conditions. We observe (approximately) the same errors and convergence orders.}}\label{fig_cmpFD3}
\end{center}
\end{figure}

\subsection{Time dependent PDEs}
In this section we focus on numerical solution of the diffusion equation
$$
\frac{\partial u}{\partial t} = \kappa \Delta u + f(x,t),\quad t>0, \; x\in \Omega\subset \R^d
$$
for $d=2,3$,
where $f(x, t)$ is a source term and $\kappa>0$ is the diffusion coefficient. Boundary condition
$Bu = g(x,t)$ for $x\in \partial \Omega$ should also be added, where $B$ is either Dirichlet and/or Neumann linear boundary operator.
We assign the prescribed solutions and apply the D-RBF-PU method to test spatial convergence
rates.

We apply method of lines (MOL) to time discretization, i.e., we approximate the spatial differential operators
with D-RBF-PU and then solve the resulting set of ordinary differential
equations (ODEs) using a backward time integrator. We assume $X = Y=Y_\Omega\cup Y_\Gamma$, $\u = [\u_\Omega; \u_\Gamma]$,
$\f_\Omega = f|_{Y_\Omega}$, $\g_\Gamma = g|_{Y_\Gamma}$,
$A_L = [A_{\Omega\Omega}~A_{\Omega\Gamma}]$ and $A_B = [A_{\Gamma\Omega}~A_{\Gamma\Gamma}]$ to get
$$
\begin{bmatrix}
\frac{\partial \u_\Omega}{\partial t}(t) \\ 0
\end{bmatrix} =
\begin{bmatrix}
\kappa A_{\Omega\Omega} & \kappa A_{\Omega\Gamma} \\
 A_{\Gamma\Omega} &  A_{\Gamma\Gamma}
\end{bmatrix}
\begin{bmatrix}
\u_\Omega(t) \\ \u_\Gamma(t)
\end{bmatrix} +
\begin{bmatrix}
\f_{\Omega}(t) \\ -\g_{\Gamma}(t)
\end{bmatrix} .
$$
Let $t_{k+1}=t_k+\Delta t$ where $\Delta t$ is the time step, and $k$ indexes a time level. Using superscripts
for time levels, the one order backward differentiation formulae (BDF1) reads as
\begin{equation*}
\begin{bmatrix}
I-\kappa\Delta t A_{\Omega\Omega} & -\kappa\Delta t A_{\Omega\Gamma}\\
A_{\Gamma\Omega} &  A_{\Gamma\Gamma}
\end{bmatrix}
\begin{bmatrix}
\u_\Omega^{k+1} \\ \u_\Gamma^{k+1}
\end{bmatrix} =
\begin{bmatrix}
\u_\Omega^{k}+\Delta t\f_{\Omega}^{k+1} \\ \g_{\Gamma}^{k+1}
\end{bmatrix}.
\end{equation*}
For a pure Dirichlet boundary condition we have $A_{\Gamma\Omega} = 0$ and $A_{\Gamma\Gamma} = I$ that reduce the system 
to
\begin{equation*}
(I-\kappa \Delta t A_{\Omega\Omega}) \u_\Omega^{k+1} = \u_\Omega^k + \Delta t (\f_\Omega^{k+1} + \kappa A_{\Omega\Gamma}\g_\Gamma^{k+1}).
\end{equation*}
The stability region of BDF1 is $S_1:=\{\lambda\in \mathbb C: |\lambda-1|\geqslant 1\}$. Since the region $S_1$ contains the left half of the complex plane, BDF1 is an A-stable ODE solver. In Figure \ref{fig_heat1} the spectrum of the discrete Laplacian with the new method is shown for 2D and 3D cases, respectively.

\begin{figure}[!h]
\begin{center}
\includegraphics[width=6cm]{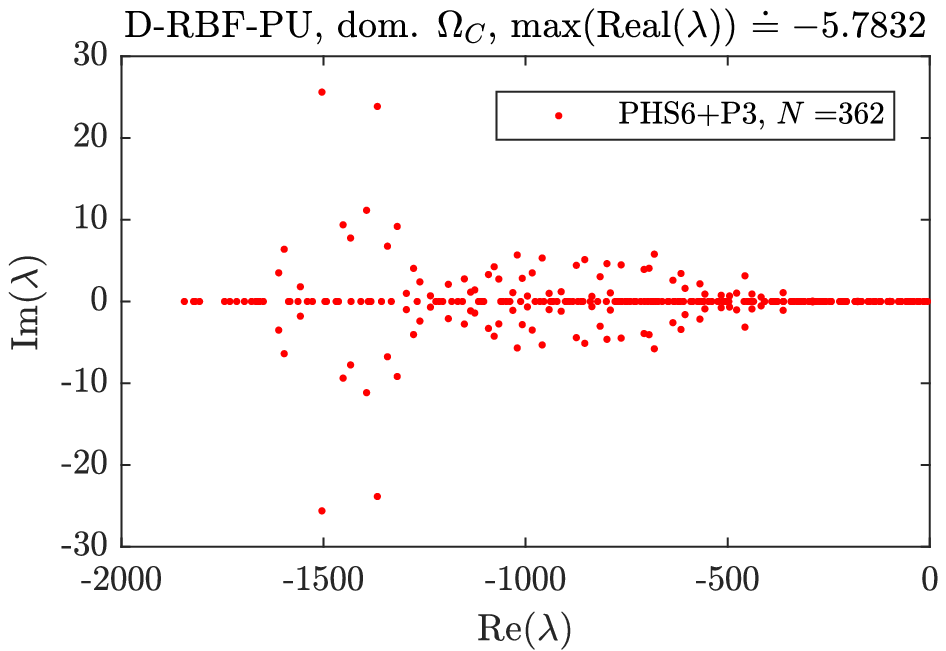}\includegraphics[width=6cm]{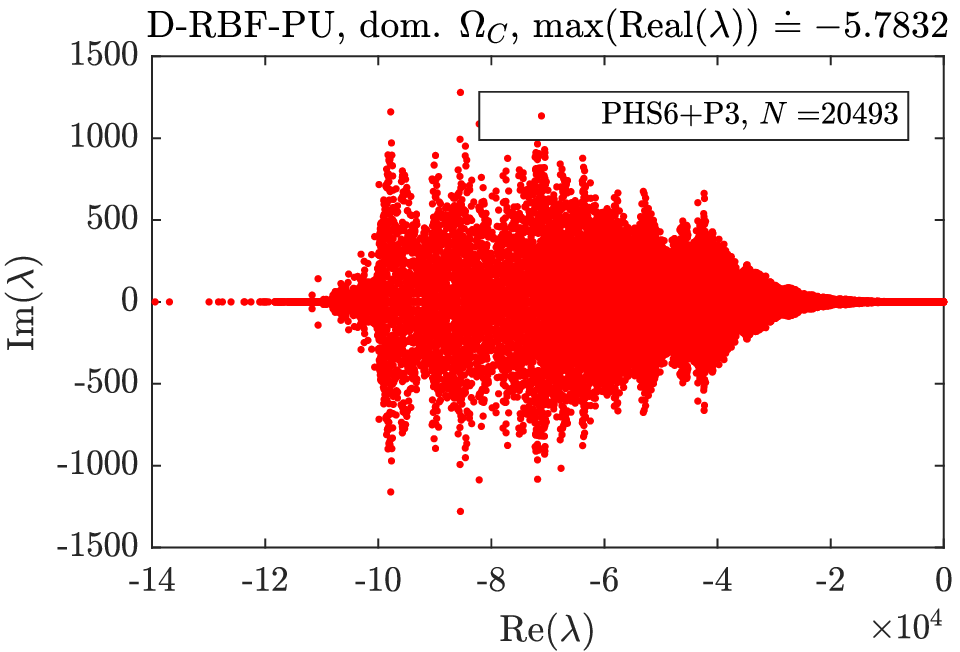}\\
\includegraphics[width=6cm]{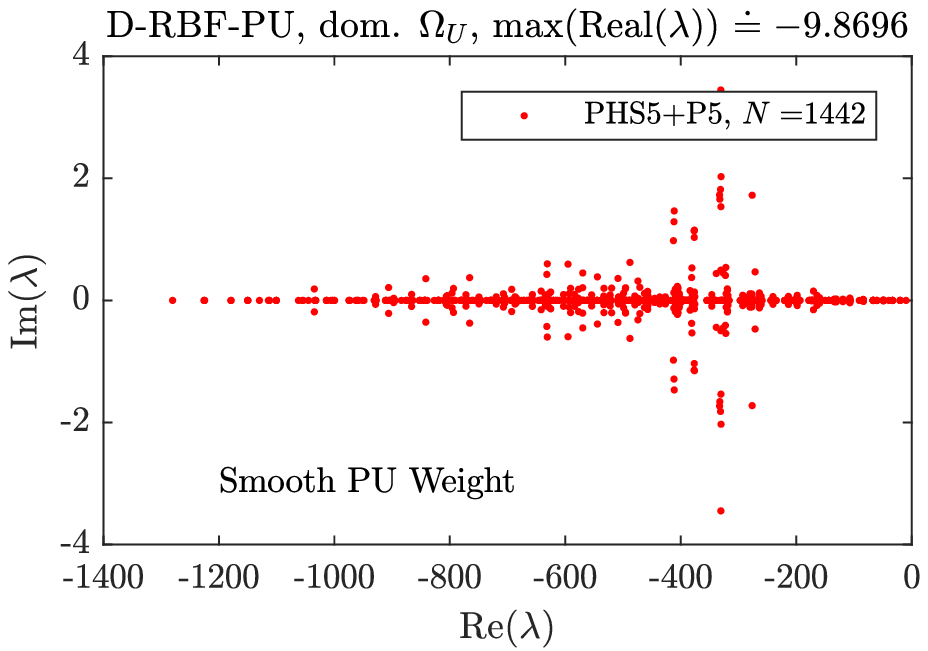}\includegraphics[width=6cm]{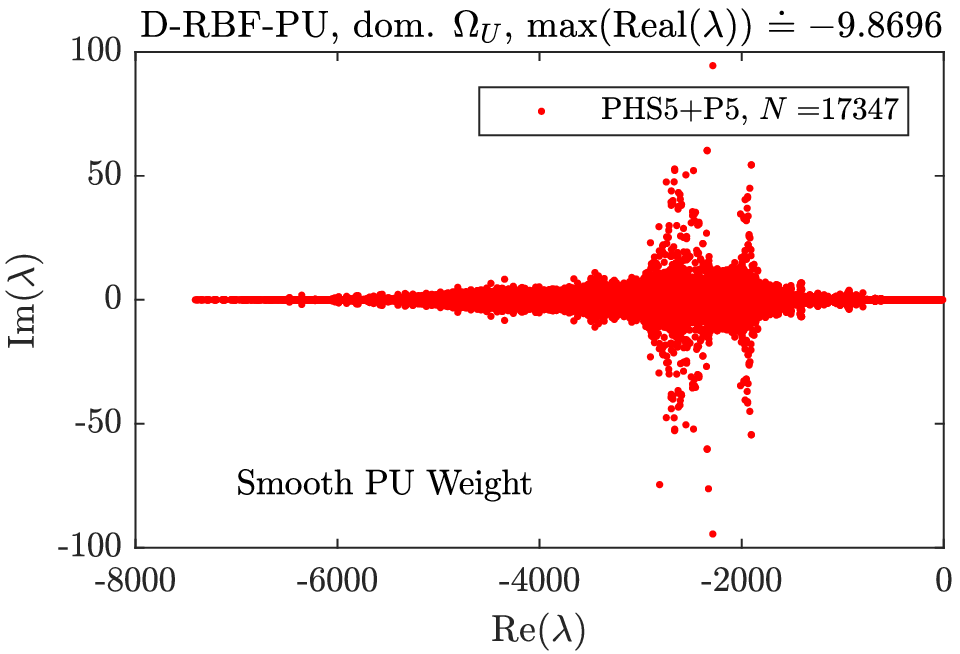}
\caption{\small{The spectrum of the discrete Laplacian with D-RBF-PU method on 2D domain $\Omega_C$ (first row) and 3D domain $\Omega_U$ (second row). All eigenvalues fall on the left half plane. In 2D,  the case PHS6+P3 and in 3D, the case PHS5+P5 are depicted. In other cases (alternative set of points or polynomial degrees), the spectrum enjoys similar features.}}\label{fig_heat1}
\end{center}
\end{figure}

In all cases the entire spectrum falls in the left half
plane. Though not illustrated here, similar behaviors are observed for other PHS kernels and higher degree polynomials.
The maximum real part of eigenvalues
is written in the title of figures; in all cases it is far away from origin allowing to use higher order backward differentiation formulas as their stability region exclude small parts near the imaginary axis.  Here we use the BDF4 scheme
\begin{equation*}
\begin{bmatrix}
I-\frac{12}{25}\kappa\Delta t A_{\Omega\Omega} & -\frac{12}{25}\kappa\Delta t A_{\Omega\Gamma}\\
A_{\Gamma\Omega} &  A_{\Gamma\Gamma}
\end{bmatrix}
\begin{bmatrix}
\u_\Omega^{k+1} \\ \u_\Gamma^{k+1}
\end{bmatrix}
=\begin{bmatrix}
R(\u_\Omega^{k},\u_\Omega^{k-1},\u_\Omega^{k-2},\u_\Omega^{k-3})+\frac{12}{25}\Delta t\f_{\Omega}^{k+1} \\ \g_{\Gamma}^{k+1}
\end{bmatrix}.
\end{equation*}
where
$$R(\u_\Omega^{k},\u_\Omega^{k-1},\u_\Omega^{k-2},\u_\Omega^{k-3}) = \frac{48}{25}\u_\Omega^{k}-\frac{36}{25}\u_\Omega^{k-1}+\frac{16}{25}\u_\Omega^{k-2}-\frac{3}{25}\u_\Omega^{k-3}.
$$
For a pure Dirichlet boundary condition the above system is reduced
to
\begin{equation*}
\Big(I-\frac{12}{25}\kappa \Delta t A_{\Omega\Omega}\Big) \u_\Omega^{k+1} = R(\u_\Omega^{k},\u_\Omega^{k-1},\u_\Omega^{k-2},\u_\Omega^{k-3}) + \frac{12}{25}\Delta t (\f_\Omega^{k+1} + \kappa A_{\Omega\Gamma}\g_\Gamma^{k+1}).
\end{equation*}
Experimental results show that both systems are invertible.

{We start the BDF4 scheme with the exact values for the first three time steps.
However, in a practical problem when the true solution is unknown we can start the BDF4 scheme with a step each of BDF1, BDF2 and BDF3.}
In all cases, we set $\kappa=1$ and $\Delta t = 0.005$, and report the results at the final time $t=0.2$.

To solve the time-stepping linear system, inspired by \cite{shankar:2017-1}, the GMRES
method using the incomplete LU preconditioner with a drop tolerance of $10^{-8}$ is used.
In our experiments, GMRES scheme converges at each time step in 2-3 iterations without restart for a relative residual less than a prescribed tolerance of
$10^{-12}$.

We consider the pure Dirichlet and the mixed Dirichlet-Neumann problems. In Figures \ref{fig_heat3}, the error plots of the RBF-FD and the D-RBF-PU methods are
given for the 2D problem on $\Omega_C$ and the 3D problem on $\Omega_U$ \cite{shankar:2017-1}. For the Dirichlet problem,
both methods converge and attain the theoretical orders.
For the Neumann-Dirichlet problems, results of the standard RBF-FD method are quickly blown up when advancing in time. Even the BDF1 scheme fails to fix this problem. However, the D-RBF-PU method converges for both 2D and 3D problems as errors and convergence orders with the hybrid PU weight are shown in Figure
\ref{fig_heat3} (right-hand side plots). The same result is obtained by the smooth PU weight, which is not illustrated here.

However, the new method, as well as the RBF-FD method, fails to give accurate results with BDF schemes for the Neumann or Neumann-Dirichlet problems on the 3D domain $\Omega_Q$ with irregular trial points. This problem might be alleviated by other improvements such as adding an extra set of points inside the domain adjacent to the boundary \cite{shankar:2017-1,shankar-fogelsoin:2018-1} or using a set of ghost
points outside the domain boundary \cite{flyer-et-al:2016-1}. We do not pursue this further and leave it for an independent study.

\begin{figure}[!h]
\begin{center}
\includegraphics[width=4cm]{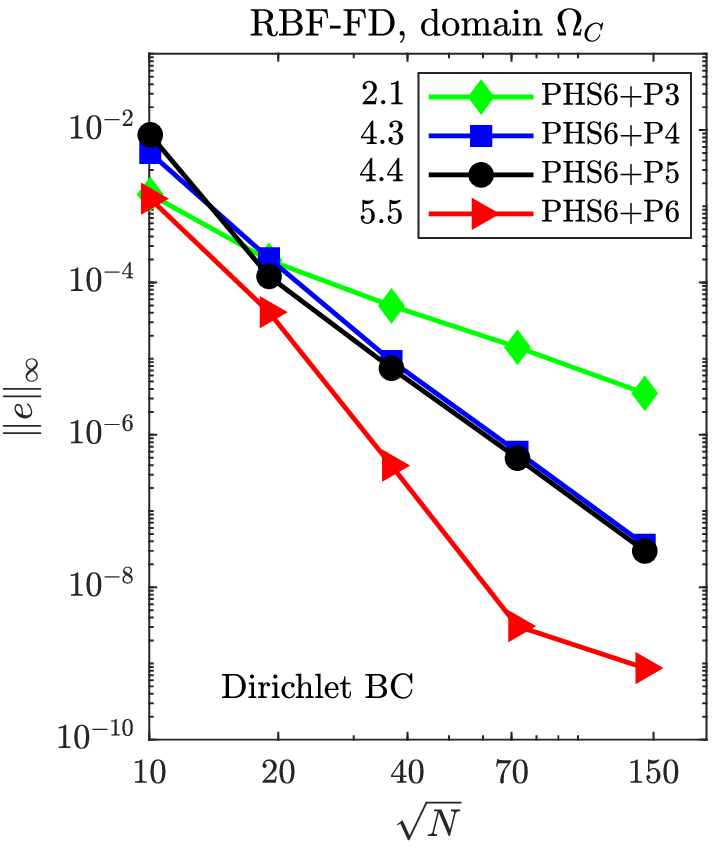}\includegraphics[width=4cm]{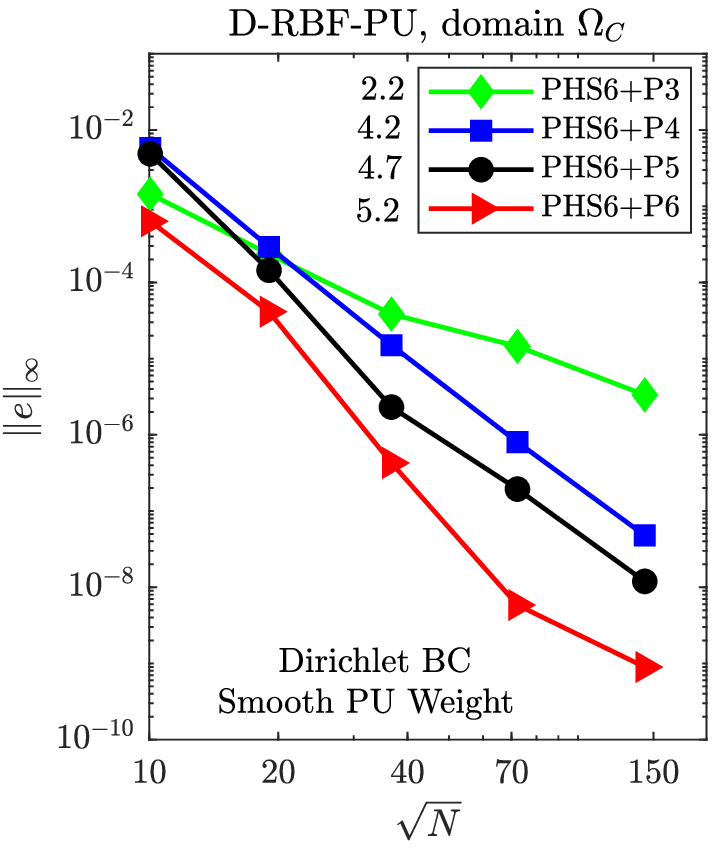}\includegraphics[width=4cm]{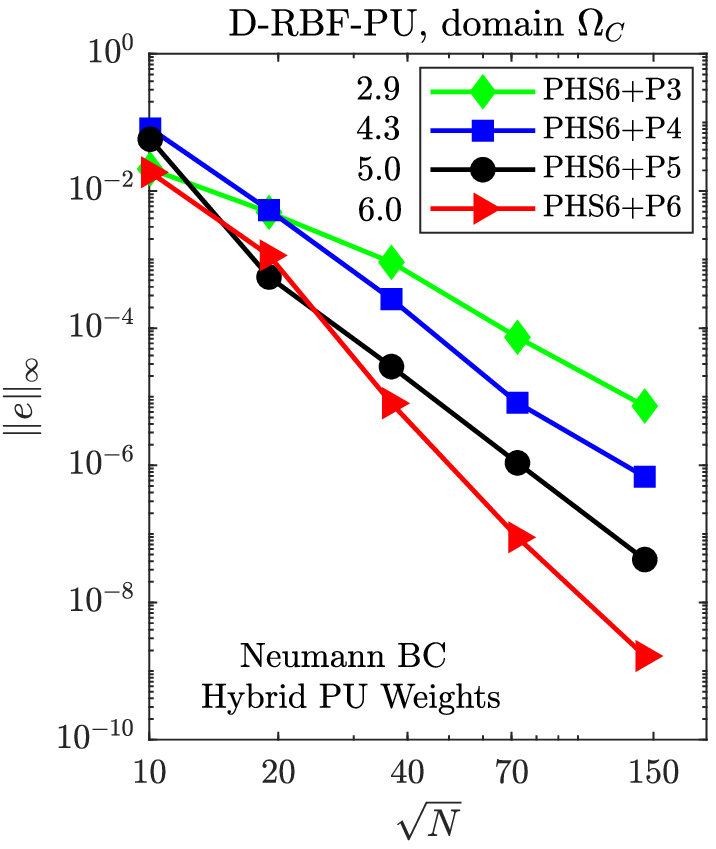}\\
\includegraphics[width=4cm]{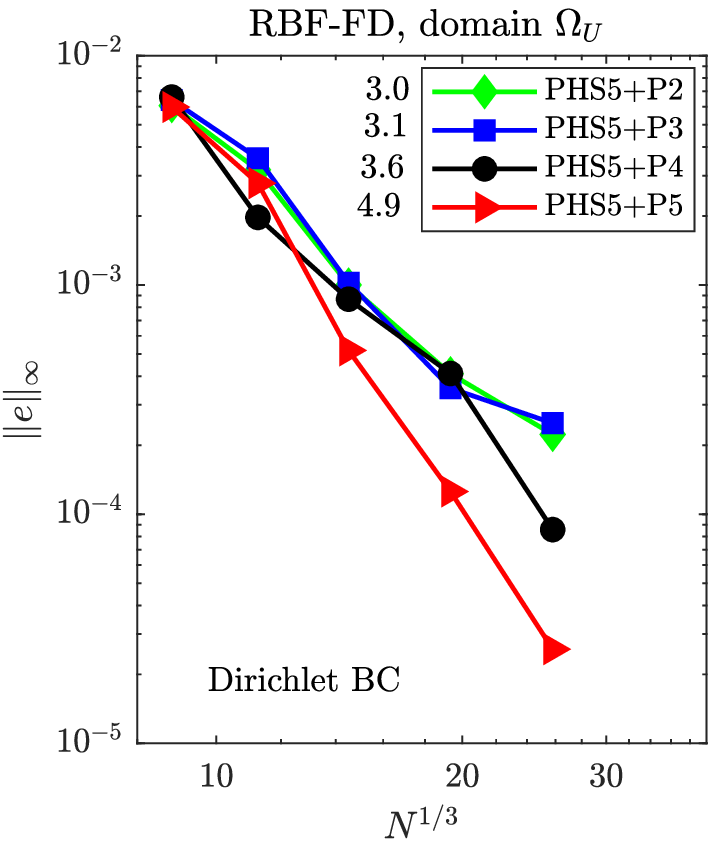}\includegraphics[width=4cm]{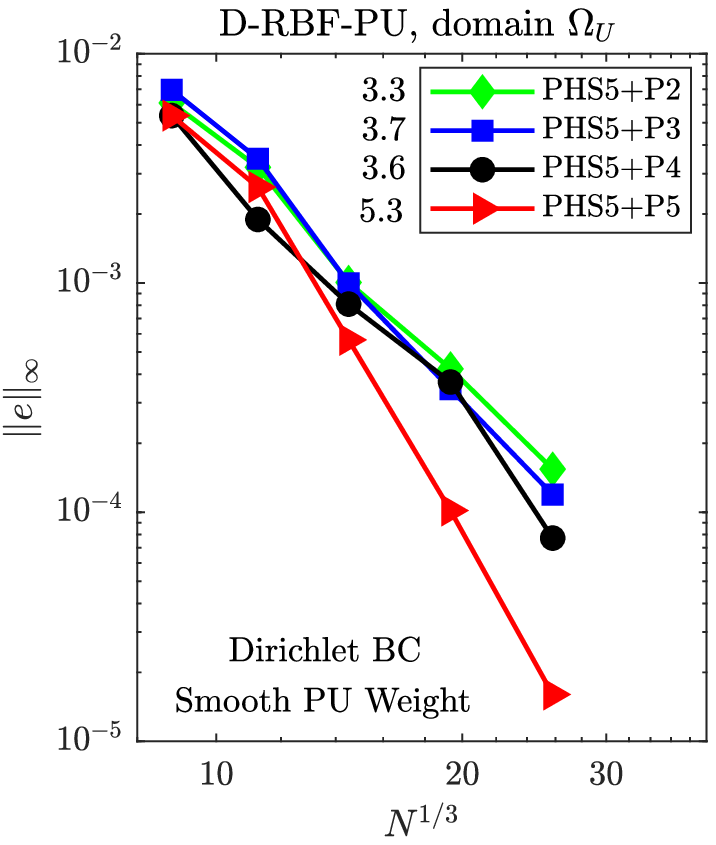}\includegraphics[width=4cm]{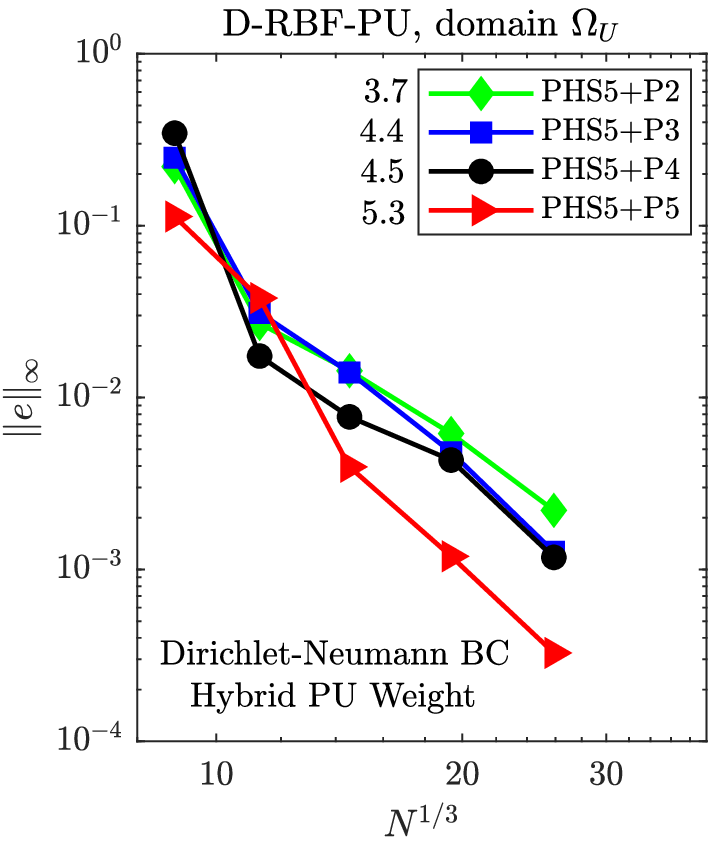}\\
\caption{\small{Errors and convergence orders of the RBF-FD (left) and the D-RBF-PU (middle) methods for heat equation on 2D domain $\Omega_C$ (first row) and 3D domain $\Omega_U$ (second row) with Dirichlet boundary conditions. The right-hand side plots show the errors and convergence orders of the D-RBF-PU method when Neumann-Dirichlet boundary conditions are imposed.
Theoretical orders are $\wt q-1$} where $\wt q$ is the degree of appended polynomials. In all cases, the observed convergence orders are better than those predicted by theoretical bounds.}\label{fig_heat3}
\end{center}
\end{figure}

\subsection{Computational costs}
In RBF-FD, the stencil $\wt X_k$ is changed per test point $y_k$, and
if $M$ is the number of total test points, $M$ local linear systems should be solved for setting up the global matrix $A$.
In D-RBF-PU, this number is highly reduced to $N_c$; the number of PU patches.
Remember that the number of patches is much smaller than the number of test points; if $h_c=\alpha h$ then in the square case $M = N \approx \alpha^d N_c$. For example, in Figure \ref{fig_covering1} for the 2D domain $\Omega_S$ we have $681$ test points while there are $44$ patches. Or, for a case in the 3D domain $\Omega_Q$, we may have $10078$ test points verses $208$ patches.
This leads to a remarkable difference between the computational costs of RBF-FD and D-RBF-PU methods for constructing the final differentiation matrix.
More precisely, D-RBF-PU should be approximately $\alpha^d$ times faster than RBF-FD at solving local linear systems. In our experiments with
$\alpha=4$, we should obtain speedups of approximately 16x and 64x for solving local problems in 2D and 3D, respectively.
But, D-RBF-PU (with a smooth weight) has its own cost for computing the PU weights and joining the local approximants. These may reduce
the above speedups for the total cost of the setup phase.
On the other side, since more points are contributed in PU approximations, the final matrix of the RBF-FD is sparser than that of the D-RBF-PU with a smooth PU weight. This makes RBF-FD faster at solving the final system. However, the D-RBF-PU with the constant-generated weight \eqref{PU-weight-const2} is as sparse as RBF-FD, because it uses a single patch for any test point.
Here, we compare the CPU times for both setting up the final matrix and solving the final system in terms of $N$, the number of trial points. RBF-FD with $\delta=\rho$ is considered.
Results are given for a 2D and a 3D problem in {Figure \ref{fig_cpu}. The total time panel is not given in the 2D case because the CPU times for solving the final systems are neglectable compared with those for the setting up the matrices. Although in this logarithmic scale the three lines of the D-RBF-PU method are close to each other, by looking at the numbers for large values of $N$, we find that in the setting up phase of the 2D case}, D-RBF-PU is approximately $5$, $9$ and $10$ times faster than RBF-FD for smooth, hybrid and constant-generated weights, respectively. These factors are increased to more than $20$, $30$ and $40$, respectively, for the 3D case. However, the solving times for the 3D systems are increased as they are denser than the 2D systems. Comparing the total times for large values of $N$, we again observe speedups of more than 5x, 9x and 10x for smooth, hybrid and constant-generated weights, respectively.
Thanks to the GMRES algorithm with incomplete LU factorization, these speedups remain (approximately) unchanged for the heat equation.
As a consequence, the hybrid case in D-RBF-PU is recommended as a first choice because it possesses both high accuracy and low complexity, simultaneously.

{Although the new method is more accurate in presence of the Neumann boundary conditions (as we observed in subsection \ref{sect-cmpFD}),
the CPU time comparison was made for a pure Dirichlet problem where both methods have approximately the same accuracy. However, if we compare cost versus accuracy in the Neumann problems the above observed speedups become twice, approximately, for the 2D case and even more for the 3D case.}

Finally, it is important to note that we do not guarantee that the chosen parameters and domain sizes are the true optimal ones and nothing more optimal can be found.

\begin{figure}[!h]
\begin{center}
\includegraphics[width=4cm]{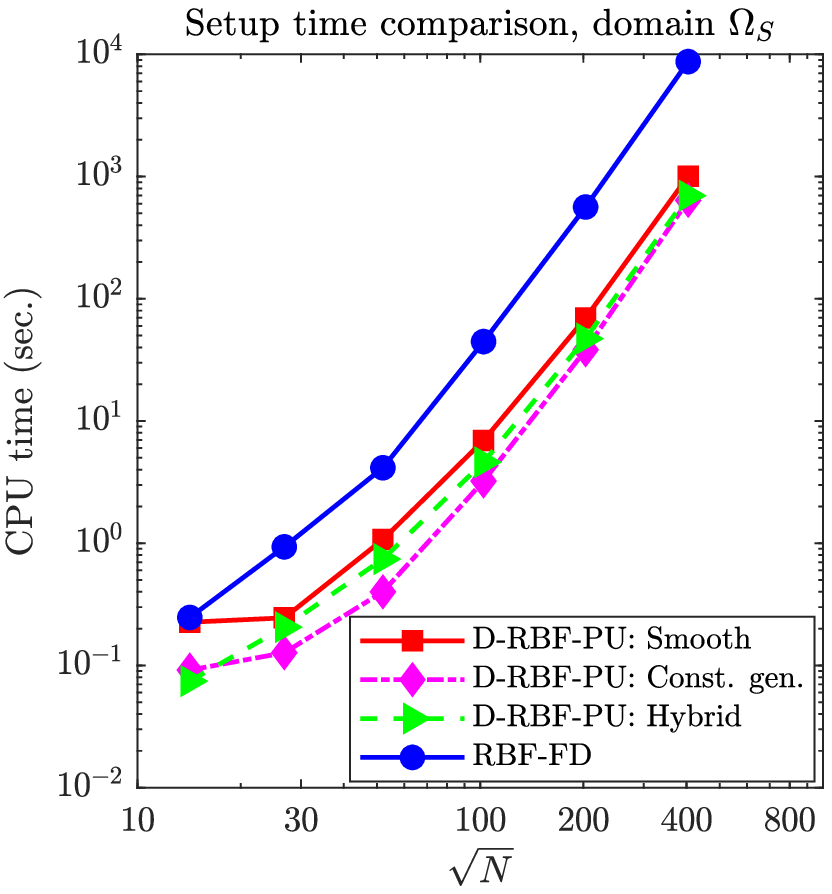}\includegraphics[width=4cm]{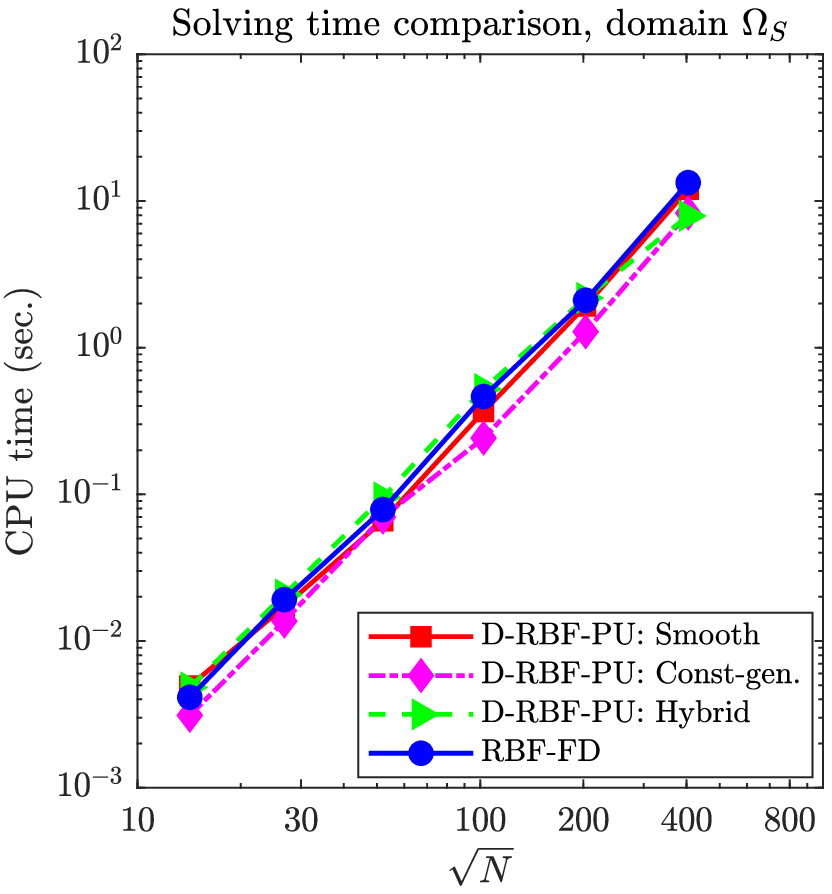}\\
\includegraphics[width=4cm]{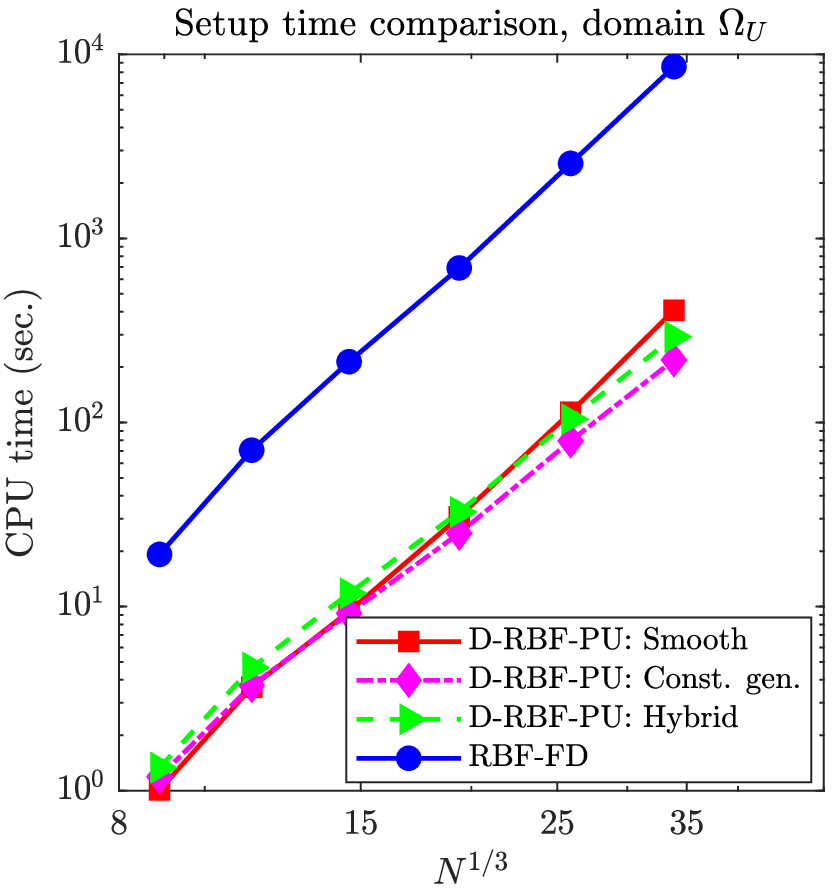}\includegraphics[width=4cm]{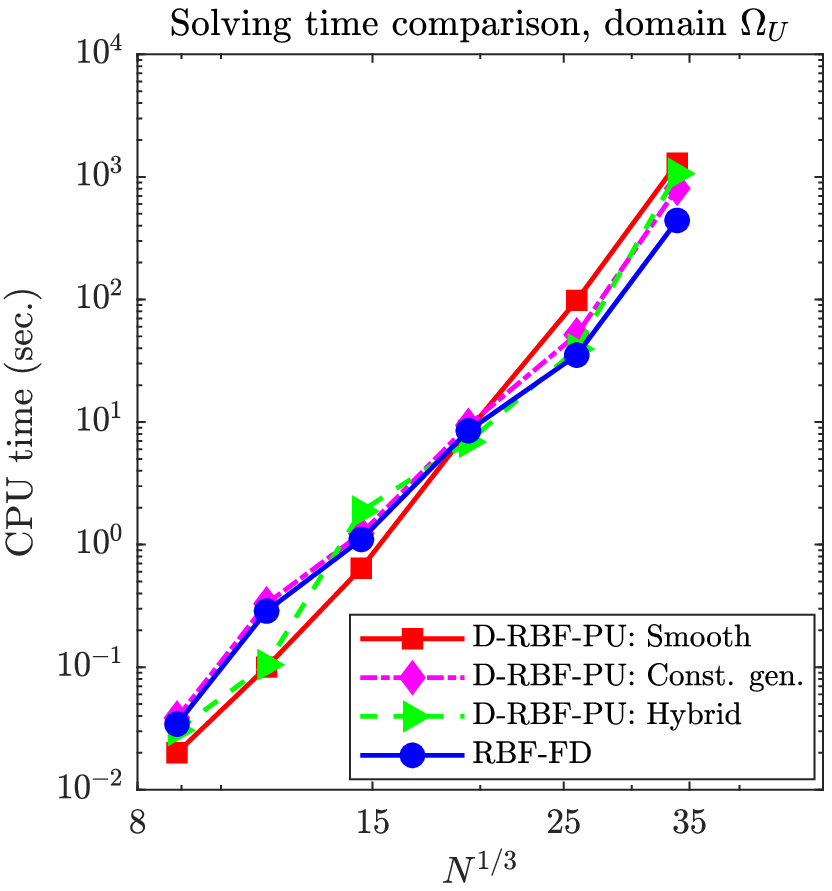}\includegraphics[width=4cm]{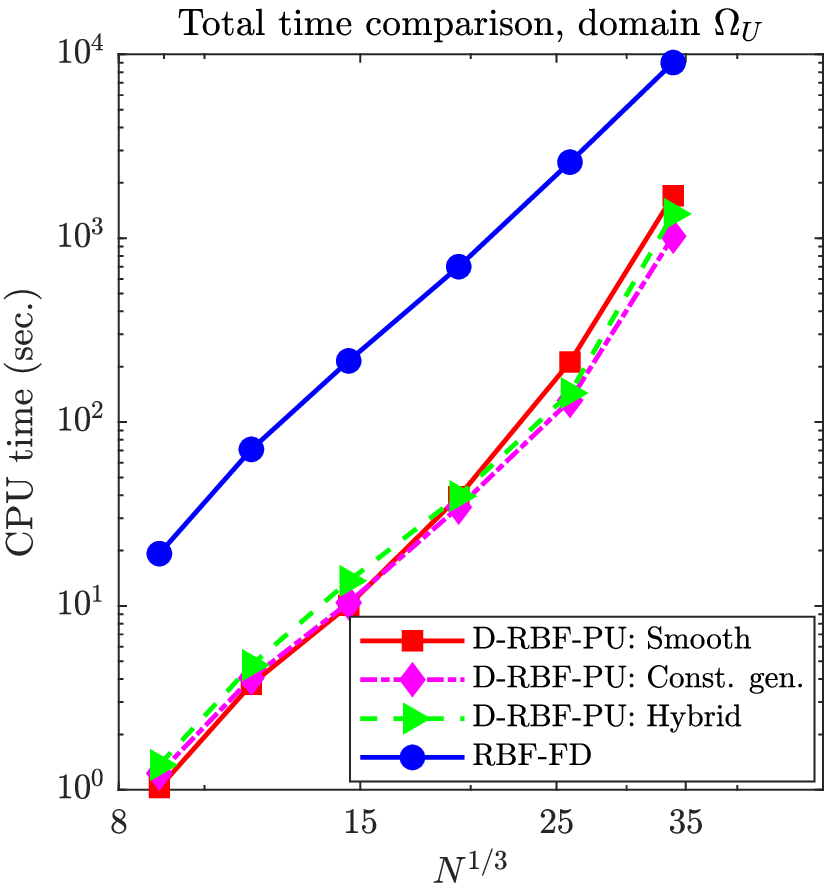}\\
\caption{\small{Comparison of the CPU times (sec.) between RBF-FD and D-RBF-PU methods for 2D (top) and $3D$ (down) problems.}}\label{fig_cpu}
\end{center}
\end{figure}

\section*{Conclusion}
The direct radial basis function partition of unity (D-RBF-PU) method is proposed for solving boundary and initial-boundary value problems.
The convergence properties and the stability issues are considered and some advantages of the new method are outlined.
{The advantage over the standard RBF-PU is that the new method avoids the action of PDE operators on PU weight functions.
This reduces both computational cost and algorithmic complexity without any significant influence on accuracy and stability.
More importantly, this provides a possibility to use some discontinuous PU weights allowing to have even more efficient schemes and recover the standard RBF-FD method as a special case.}

{In comparison with the RBF-FD, the new method needs to solve much fewer number of local linear systems for constructing the final differentiation matrix.
This reduces the computational costs, considerably.
In our experiments, average speedups of 5x with a smooth PU weight, 10x with a constant-generated PU weight and 9x with a hybrid PU weight are observed in both 2D and 3D examples.
Although for a pure Dirichlet problem both methods have approximately the same accuracy, the new method gives more accurate results for Neumann or Neumann-Dirichlet boundary value problems.}

Finally, we note that this method can be applied to a large class of PDE problems in engineering and sciences.
{In a follow-up work we will explore an application of the D-RBF-PU method for solving PDEs on surfaces.}

\section*{Acknowledgments}
I wish to express my deep gratitude to anonymous reviewers for their helpful comments
which improved the quality of the paper. Special thanks go to Behnam Hashemi (Shiraz University of
Technology) for his helpful comments about sparse matrix manipulation in \textsc{Matlab} and a careful proofreading.
Comments by Robert Schaback (Universit\"{a}t G\"{o}ttingen) about the approximation orders of the polyharmonic kernels are greatly acknowledged.


\begin{thebibliography}{10}


\bibitem{aiton:2014-thesis}
K.~Aiton.
\newblock A radial basis function partition of unity method for transport on
  the sphere.
\newblock Master's thesis, Boise State University, Boise, Idaho, 2014.

\bibitem{babuska-melenk:1997-1}
I.~Babu\v{s}ka and J.~M. Melenk.
\newblock The partition of unity method.
\newblock {\em International Journal for Numerical Methods in Engineering},
  40:727--758, 1997.

\bibitem{bozzini-et-al:2015-1}
M.~Bozzini, Rossini~M. Lenarduzzi, L., and R.~Schaback.
\newblock Interpolation with variably scaled kernels.
\newblock {\em IMA Journal of Numerical Analysis}, 35:199--215, 2015.

\bibitem{cavoretto:2015-1}
R.~Cavoretto.
\newblock Two and three dimensional partition of unity interpolation by
  product-type functions.
\newblock {\em Applied Mathematics \& Information Sciences}, 9:1--8, 2015.

\bibitem{davydov-schaback:2019-1}
O.~Davydov and R.~Schaback.
\newblock Optimal stencils in {S}obolev spaces.
\newblock {\em IMA Journal of Numerical Analysis}, 39:398--422, 2019.

\bibitem{davydov:2020-1}
O.~Davydov.
\newblock Error bounds for a least squares meshless finite difference method on closed manifolds,
\newblock {\em Preprint}, arXiv:1910.03359 [math.NA], 2019.


\bibitem{DeMarchi-et-al:2019-2}
S.~De~Marchi, A.~Mart\'{\i}nez, and E.~Perracchione.
\newblock Fast and stable rational {RBF}-based partition of unity
  interpolation.
\newblock {\em Journal of Computational and Applied Mathematics}, 349:331--343,
  2019.

\bibitem{DeMarchi-et-al:2019-1}
S.~De~Marchi, A.~Mart\'{\i}nez, E.~Perracchione, and M.~Rossini.
\newblock {RBF}-based partition of unity methods for elliptic {PDEs}:
  Adaptivity and stability issues via variably scaled kernels.
\newblock {\em J. Sci. Comput.}, 79(1):321--344, 2019.

\bibitem{DeRossi-et-al:2016-1}
A.~De~Rossi, E.~Perracchione, and E.~Venturino.
\newblock Fast strategy for PU interpolation: An application for the reconstruction of separatrix manifolds.
\newblock {\em Dolom. Res. Notes Approx.}, 9:3--12, 2016.

\bibitem{fasshauer:2007-1}
G.~E. Fasshauer.
\newblock {\em Meshfree Approximations Methods with Matlab}.
\newblock World Scientific, Singapore, 2007.

\bibitem{flyer-et-al:2016-1}
N.~Flyer, G.~A. Barnett, and L.~J. Wicker.
\newblock Enhancing finite differences with radial basis functions: Experiments
  on the {N}avier-{S}tokes equations.
\newblock {\em Journal of Computational Physics}, 316:39--62, 2016.

\bibitem{flyer-et-al:2012-1}
N.~Flyer, E.~Lehto, S.~Blaise, G.~B. Wright, and A.~St-Cyr.
\newblock A guide to {RBF}-generated finite differences for nonlinear
  transport: shallow water simulations on a sphere.
\newblock {\em Journal of Computational Physics}, 231:4078--4095, 2012.

\bibitem{fornberg-flyer:2015-1}
B.~Fornberg and N.~Flyer.
\newblock Solving {PDEs} with radial basis functions.
\newblock {\em Acta Numerica}, pages 215--258, 2015.

\bibitem{fornberg-et-al:2011-1}
B.~Fornberg, E.~Larsson, and N.~Flyer.
\newblock Stable computations with {G}aussian radial functions.
\newblock {\em SIAM Journal on Scientific Computing}, 33:869--892, 2011.

\bibitem{fornberg-lehto:2011-1}
B.~Fornberg and E.~Lehto.
\newblock Stabilization of {RBF}-generated finite difference methods for
  convective {PDEs}.
\newblock {\em Journal of Computational Physics}, 230:2270--2285, 2011.

\bibitem{fornberg-et-al:2013-1}
B.~Fornberg, E.~Lehto, and C.~Powell.
\newblock Stable calculation of {G}aussian-based {RBF-FD} stencils.
\newblock {\em Computers and Mathematics with Applications}, 65:627--635, 2013.

\bibitem{fornberg-piret:2007-1}
B.~Fornberg and C.~Piret.
\newblock A stable algorithm for flat radial basis functions on a sphere.
\newblock {\em SIAM Journal on Scientific Computing}, 30:60--80, 2007.

\bibitem{fornberg-wright:2004-1}
B.~Fornberg and G.~B. Wright.
\newblock Stable computation of multiquadric interpolants for all values of the
  shape parameter.
\newblock {\em Computers \& Mathematics with Applications}, 48:853--867, 2004.

\bibitem{franke:1982-1}
R.~Franke.
\newblock Scattered data interpolation: tests of some methods.
\newblock {\em Mathematics of Computation}, 48:181--200, 1982.

\bibitem{Garmanjani-et-al:2018-1}
G.~Garmanjani, R.~Cavoretto, and M.~Esmaeilbeigi.
\newblock A {RBF} partition of unity collocation method based on finite
  difference for initial-boundary value problems.
\newblock {\em Computers \& Mathematics with Applications}, 75:4066--4090,
  2018.

\bibitem{heryudono-et-al:2016-1}
A.~Heryudono, E.~Larsson, A.~Ramage, and L.~Sydow.
\newblock Preconditioning for radial basis function partition of unity methods.
\newblock {\em J. Sci. Comput.}, 67(3):1089--1109, 2016.

\bibitem{Iske:2003-1}
A.~Iske.
\newblock On the approximation order and numerical stability of local
  {Lagrange} interpolation by polyharmonic splines.
\newblock In {\em International Series of Numerical Mathematics 145}, pages
  153--165, Basel, 2003. Birkh\"{a}user Verlag.

\bibitem{Iske:2013-1}
A.~Iske.
\newblock On the construction of kernel-based adaptive particle methods in
  numerical flow simulation.
\newblock In {\em Notes on Numerical Fluid Mechanics and Multidisciplinary
  Design (NNFM)}, pages 197--221, Berlin, 2013. Springer.

\bibitem{larsson-et-al:2013-1}
E.~Larsson, E.~Lehto, A.~Heryudono, and B.~Fornberg.
\newblock Stable computation of differentiation matrices and scattered node
  stencils based on {Gaussian} radial basis functions.
\newblock {\em SIAM Journal on Scientific Computing}, 35:A2096--A2119, 2013.

\bibitem{larsson-et-al:2017-1}
E.~Larsson, V.~Shcherbakov, and A.~Heryudono.
\newblock A least squares radial basis function partition of unity method for
  solving {PDEs}.
\newblock {\em SIAM Journal on Scientific Computing}, 39:A2538--A2563, 2017.

\bibitem{melenk-babuska:1996-1}
J.~M. Melenk and I.~Babu\v{s}ka.
\newblock The partition of unity finite element method: {B}asic theory and
  applications.
\newblock {\em Computer Methods in Applied Mechanics and Engineering},
  139:289--314, 1996.

\bibitem{mirzaei-schaback:2013-1}
D.~Mirzaei and R.~Schaback.
\newblock Direct {M}eshless {L}ocal {P}etrov-{G}alerkin ({DMLPG}) method: a
  generalized {MLS} approximation.
\newblock {\em Applied Numerical Mathematics}, 33:73--82, 2013.

\bibitem{mirzaei-et-al:2012-1}
D.~Mirzaei, R.~Schaback, and M.~Dehghan.
\newblock On generalized moving least squares and diffuse derivatives.
\newblock {\em IMA Journal of Numerical Analysis}, 32:983--1000, 2012.

\bibitem{Mollapourasl-et-al:2019-1}
R.~Mollapourasl, A.~Fereshtian, and M.~Vanmaele.
\newblock Radial basis functions with partition of unity method for {American}
  options with stochastic volatilitys.
\newblock {\em Computational Economics}, 53:259--287, 2019.

\bibitem{safdari-et-al:2015-1}
A.~Safdari-Vaighani, A.~Heryudono, and E.~Larsson.
\newblock A radial basis function partition of unity collocation method for
  convection---diffusion equations arising in financial applications.
\newblock {\em J. Sci. Comput.}, 64(2):341--367, 2015.

\bibitem{saff-kuijlaars:1997-1}
E.~B. Saff and A.~B.~J. Kuijlaars.
\newblock Distributing many points on a sphere.
\newblock {\em Math. Intell.}, 19:5--11, 1997.

\bibitem{schaback:2013-1}
R.~Schaback.
\newblock Direct discretization with application to meshless methods for
  {PDEs}.
\newblock In {\em Dolomites Research Notes on Approximation, Proceedings of
  DWCAA12}, volume~6, pages 37--51, 2013.

\bibitem{schaback:2016-1}
R.~Schaback.
\newblock Error analysis of nodal meshless methods.
\newblock In {\em Meshfree Methods for Partial Differential Equations VIII,
  Lecture Notes in Computational Science and Engineering 115}, pages 117--143,
  Bonn, 2015. Springer 2017.

\bibitem{schaback:2014-1}
R.~Schaback.
\newblock All well--posed problems have uniformly stable and convergent
  discretizations.
\newblock {\em Numer.\ Math.}, 132:597--630, 2016.

\bibitem{shankar:2017-1}
V.~Shankar.
\newblock The overlapped radial basis function-finite difference {(RBF-FD)}
  method: A generalization of {RBF-FD}.
\newblock {\em Journal of Computational Physics}, 342:211--228, 2017.

\bibitem{shankar-fogelsoin:2018-1}
V.~Shankar and A.~L. Fogelson.
\newblock Hyperviscosity-based stabilization for radial basis function-finite
  difference {(RBF-FD)} discretizations of advection-diffusion equations.
\newblock {\em Journal of Computational Physics}, 372:616--639, 2018.

\bibitem{shankar-et-al:2018-1}
V.~Shankar, A.~Narayan, and Kirby~R. M.
\newblock {RBF-LOI}: Augmenting radial basis functions {(RBFs)} with least
  orthogonal interpolation {(LOI)} for solving {PDEs} on surfaces.
\newblock {\em Journal of Computational Physics}, 373:722--735, 2018.

\bibitem{shcherbakov:2016-1}
V.~Shcherbakov.
\newblock Radial basis function partition of unity operator splitting method
  for pricing multi-asset {American} options.
\newblock {\em BIT Numerical Mathematics}, 56:1401--1423, 2016.

\bibitem{shcherbakov-larsson:2016}
V.~Shcherbakov and E.~Larsson.
\newblock Radial basis function partition of unity methods for pricing
  {Vanilla} basket options.
\newblock {\em Comput. Math. Appl.}, 71(1):185--200, 2016.

\bibitem{shepard:1968-1}
D.~Shepard.
\newblock A two-dimensional interpolation function for irregularly-spaced data.
\newblock In {\em Proceedings of the 23th National Conference ACM}, pages
  517--523, 1968.

\bibitem{shu-et-al:2003-1}
C.~Shu, H.~Ding, and K.~S. Yeo.
\newblock Local radial basis function-based differential quadrature method and
  its application to solve two-dimensional incompressible navier-stokes
  equations.
\newblock {\em Computer Methods in Applied Mechanics and Engineering},
  192:941--954, 2003.

\bibitem{tominec-et-al:2020-1}
I.~Tominec, E.~Larsson, and A.~Heryudono.
\newblock A least squares radial basis function finite difference method with improved stability properties.
\newblock {\em Preprint}, arXiv:2003.03132 [math.NA], 2020.

\bibitem{tolstykh:2000-1}
A.~E. Tolstykh.
\newblock On using {RBF}-based differencing formulas for unstructured and mixed
  structured-unstructured grid calculations.
\newblock In {\em Proceedings of the 16th {IMACS} World Congress 228}, pages
  4606--4624, Lausanne, 2000.

\bibitem{tolstykh:2003-1}
A.~E. Tolstykh.
\newblock On using radial basis functions in a ``finite difference mode'' with
  applications to elasticity problems.
\newblock {\em Computational Mechanics}, 33:68--79, 2003.



\bibitem{wendland:1995-1}
H.~Wendland.
\newblock Piecewise polynomial, positive definite and compactly supported
  radial basis functions of minimal degree.
\newblock {\em Advances in Computational Mathematics}, 4:389--396, 1995.

\bibitem{wendland:2002-3}
H.~Wendland.
\newblock Fast evaluation of radial basis functions: methods based on partition
  of unity.
\newblock In {\em Approximation Theory, X: Wavelets, Splines, and
  Applications}, pages 473--483, Nashville, TN., 2002. Vanderbilt University
  Press.

\bibitem{wendland:2005-1}
H.~Wendland.
\newblock {\em Scattered Data Approximation}.
\newblock Cambridge University Press, 2005.

\bibitem{wright:2003-thesis}
G.~B. Wright.
\newblock {\em Radial basis function interpolation: {N}umerical and analytical
  developments}.
\newblock PhD thesis, University of Colorado, Boulder, 2003.

\bibitem{wright-fornberg:2017-1}
G.~B. Wright and B.~Fornberg.
\newblock Stable computations with flat radial basis functions using
  vector-valued rational approximations.
\newblock {\em Journal of Computational Physics}, 331:137--156, 2017.

\end{thebibliography}

\end{document}